\numberwithin{equation}{section}
\newtheorem{theorem}[equation]{Theorem}
\newtheorem{corollary}[equation]{Corollary}
\newtheorem{lemma}[equation]{Lemma}
\newtheorem{proposition}[equation]{Proposition}
\newtheorem{assumption}[equation]{Assumptions}
\theoremstyle{definition}
\newtheorem{definition}[equation]{Definition}
\newtheorem{example}[equation]{Example}
\theoremstyle{remark}
\newtheorem{remark}[equation]{Remark}
\newcommand{\sslash}{ /\!\!/ }
\newcommand{\GL}{\operatorname{Gl}}
\newcommand{\SL}{\operatorname{Sl}}
\newcommand{\GLm}{\operatorname{Gl}_{m}}
\newcommand{\SLm}{\operatorname{Sl}_{m}}
\renewcommand{\P}{\mathbb{P}}
\renewcommand\O{\mathcal{O}}
\DeclareMathOperator{\Proj}{Proj}
\DeclareMathOperator{\rk}{rk}
\DeclareMathOperator{\Hom}{Hom}
\DeclareMathOperator{\Spec}{Spec}
\DeclareMathOperator{\Aut}{Aut}
\DeclareMathOperator{\End}{End}
\DeclareMathOperator{\HOM}{{\mathbb{H}om}} 
\DeclareMathOperator{\ISOM}{{\mathbb{I}som}} 
\DeclareMathOperator{\Alg}{Alg}
\DeclareMathOperator{\pAlg}{pAlg}
\DeclareMathOperator{\gAlg}{gAlg}
\DeclareMathOperator{\Mod}{Mod}
\DeclareMathOperator{\Id}{Id}
\newcommand{\Z}{\mathbb{Z}}
\newcommand{\Sim}{\operatorname{S}}
\begin{document}
\emergencystretch 3em

\title[On the subalgebra of invariant elements]{On the subalgebra of invariant elements: finiteness and immersions}

\author{J. Mart\'in Ovejero}
\address{Departamento de Matem\'aticas, Universidad de Salamanca, Spain}
\email{lemurx@usal.es}
\thanks{The authors are supported by grant PGC2018-099599-B-I00 of MINECO}

\author{A. L. Mu\~noz Casta\~neda}
\address{Departamento de Matem\'aticas, Universidad de Le\'on, Spain}
\email{amunc@unileon.es}

\author{F. J. Plaza Mart\'in}
\address{Departamento de Matem\'aticas, Universidad de Salamanca, Spain}
\email{fplaza@usal.es}

\subjclass[2020]{14L24, 13A50, 13A02}

\date{}


\keywords{Invariant theory, reductive groups, linearly reductive groups, algebra of invariants, non-noetherian base, graded algebras}


\begin{abstract}

Let $R$ be an algebra over a ring $\Bbbk$, $T$ an $R$-algebra, $M$ a finitely generated projective $R$-module, and $N$ a $T$-module. Let $G$ be a linearly reductive group scheme over $\Bbbk$ equipped with a representation $\rho:\underline{G}_{R}\rightarrow \underline{\Aut}_{\Mod(R)}(M)$. For the graded $T$-algebra $A$, defined as
$
A := \left( \Sim_{T}^{\bullet} (M^{\vee} \otimes_{R}N )\right)^{G},
$
we determine the conditions under which the graded $T$-algebra $A$ is finitely generated, finitely presented, or flat. Furthermore, we establish the conditions under which a closed embedding of $\Proj A$ into a projective space exists. Since we do not impose any Noetherian hypotheses, our results generalize those in the literature, providing new powerful tools regarding moduli problems.
\end{abstract}

\maketitle

\tableofcontents{}


\section{Introduction}

By utilizing classical and geometric invariant theory techniques, we can establish the following identities:
\begin{equation}
    \label{E:Proj/SL is Proj^SL}
    \left(\Proj \Sim^\bullet\Hom(\Bbbk^m, \Bbbk^n)\right) / \SLm \simeq \Proj (\Sim^\bullet\Hom(\Bbbk^m,\Bbbk^n))^{\SLm}
    \simeq \operatorname{Grass}^m(\Bbbk^n)
\end{equation}

Furthermore, the Plücker morphism provides a $\SL_{n}$-equivariant
closed embedding of the aforementioned scheme into the projective space $\P (\wedge^m \Bbbk^n)^\vee$, where the image is given by the intersection of quadrics. The proof of these claims can be found in~\S\ref{sec:toy model}, under the assumption that $\Bbbk$ is a field and $n\geq m$.

These facts serve as a foundational example for the problems addressed in this paper. To establish a precise framework, we consider the following scenario.

\begin{assumption}\label{assumptions}
Let $R$ be an algebra over a ring $\Bbbk$, $T$ an $R$-algebra, $M$ a finitely generated projective $R$-module, and $N$ a $T$-module. Let $G$ be a linearly reductive group scheme over $\Bbbk$ equipped with a representation $\rho:\underline{G}_{R}\rightarrow \underline{\Aut}_{\Mod(R)}(M)$, representing a natural transformation between the group functors. Finally, consider the graded $T$-algebra

\begin{equation}
    \label{E:definition A as invariants}
     A \,:=\, \left( \Sim_{T}^\bullet (M^{\vee} \otimes_{R}N )\right)^{G}       
\end{equation}
where $M^\vee:=\Hom_{\Mod(R)}(M,R)$ denotes the dual of $M$.
\end{assumption}

This paper primarily focuses on two main issues. Firstly, we determine the conditions under which the graded $T$-algebra $A$ is finitely generated, finitely presented, or flat. Secondly, we establish the conditions under which a closed embedding of $\Proj A$ into a projective space exists. Notably, we do not impose any Noetherian hypotheses. Next, we discuss both of these issues separately.

Regarding the properties of $A$ in \eqref{E:definition A as invariants}, let us recall that a central problem in classical invariant theory can be formulated as follows: (1) Given a base field $\Bbbk$, a algebraic group $G$ over$\Bbbk$ and a finite-dimensional $\Bbbk$-representation $G\rightarrow \GL(V)$, describe the invariant elements of the symmetric algebra of the dual, $(\Sim^{\bullet}(V^{\vee}))^{G}$, in terms of generators and relations. This problem can be equivalently formulated in two other ways: (2) Describe $(V^{\otimes r}\otimes (V^{\vee})^{\otimes s})^{G}$ in terms of generators and relations, and (3) decompose the $r$th tensor power $V^{\otimes r}$ into a direct sum of irreducible representations. The first formulation directly relates to the quotient variety $V/G$, while the third one is connected to representation theory.

Historically, the question of whether $(\Sim^{\bullet}(V^{\vee}))^{G}$ is always a finitely generated $\Bbbk$-algebra was posed by Hilbert in this generality and is now known as the 14th Hilbert problem \cite{hilbert}. Subsequently, several authors, including Schur and Weyl \cite{weyl} (see also \cite{deConcini-Procesi} and references therein), focused on the case of classical groups. In fact, when the algebraic group $G$ is classical (e.g., $\GLm$, $\SLm$, $\operatorname{Sp}_{2m}$, $\operatorname{O}_{m}$, or $\operatorname{SO}_{n}$, among others), the description of the generators is commonly referred to as the First Fundamental Theorem (FFT), while the description of the relations is known as the Second Fundamental Theorem (SFT). These fundamental theorems imply that $(\Sim^{\bullet}(V^{\vee}))^{G}$ is a finitely generated $\Bbbk$-algebra in these cases.

However, Nagata presented an example where $A^{G}$ fails to be a finitely generated subring of $A$ (\cite{nagata-hilbert}). He also demonstrated that there is a class of groups, called reductive groups, for which the subring consisting of invariant elements $A^{G}\subset A$ is a finitely generated $\Bbbk$-algebra for any finitely generated $\Bbbk$-algebra $A$ and any representation $\rho:G\rightarrow \Aut_{\Alg(\Bbbk)}(A)$ (\cite{nagata}). This fundamental result forms the basis of geometric invariant theory and constructive methods in many moduli problems \cite{mumford-git}.

Later, Seshadri proved a natural generalization of Nagata's result \cite[Theorem 2]{seshadri-relative}. 
Indeed, Seshadri's results imply some finiteness properties of the subalgebra of invariant elements under the action of $G$ provided that $G$ is a reductive group scheme over $R$ and  $R$ is of finite type over a universally Japanese ring $\Bbbk$.

The first main result of this article,  based on a generalization of Seshadri's result to a relative and non-necessarily Noetherian context (Theorem~\ref{t:Invariant de Sim de Mm es fin pres}), reads as follows.

\begin{theorem}\label{t:main-fin-pres}
Consider $(\Bbbk, R, M, T, N, G, A)$ as described in Assumptions~\ref{assumptions}.

If $N$ is a finitely generated (resp. finitely presented, flat) $T$-module, then $A$ is a finitely generated (resp. finitely presented, flat) $T$-algebra.
\end{theorem}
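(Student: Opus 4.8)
The plan is to reduce all three assertions to the case where $N$ is a finite free $T$-module, which is what Theorem~\ref{t:Invariant de Sim de Mm es fin pres} settles, and then to propagate the relevant property along a quotient or a filtered colimit. Write $M_T:=M\otimes_R T$; since $M$ is finitely generated projective over $R$, the functor $M^\vee\otimes_R(-)$ is exact and carries $T^{\oplus m}$ to $(M_T^\vee)^{\oplus m}$. For a $T$-module $E$ put $A(E):=\big(\Sim_T^\bullet(M^\vee\otimes_R E)\big)^G$, so $A=A(N)$ and, by Theorem~\ref{t:Invariant de Sim de Mm es fin pres}, $A(T^{\oplus m})=\big(\Sim_T^\bullet((M_T^\vee)^{\oplus m})\big)^G$ is a finitely generated, finitely presented, and flat $T$-algebra for every $m$. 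The ingredients used throughout are: exactness of $M^\vee\otimes_R(-)$; that $\Sim_T^\bullet$ preserves surjections and commutes with filtered colimits; and, $G$ being linearly reductive, that $(-)^G$ is exact and, being a direct summand of the identity via the Reynolds operator, commutes with filtered colimits and finite direct sums.

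\textbf{Finite generation and flatness.} If $N$ is finitely generated, a surjection $T^{\oplus m}\twoheadrightarrow N$ gives, after applying successively $M^\vee\otimes_R(-)$, $\Sim_T^\bullet$ and $(-)^G$, a surjection of graded $T$-algebras $A(T^{\oplus m})\twoheadrightarrow A$; as the source is a finitely generated $T$-algebra, so is $A$. If $N$ is flat, write $N=\operatorname{colim}_i N_i$ as a filtered colimit of finite free $T$-modules (Lazard); since $M^\vee\otimes_R(-)$, $\Sim_T^\bullet$ and $(-)^G$ all commute with filtered colimits, $A=\operatorname{colim}_i A(N_i)$, each $A(N_i)$ is $T$-flat, and a filtered colimit of flat modules is flat.

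\textbf{Finite presentation.} Assume $N$ finitely presented and fix a presentation $T^{\oplus l}\xrightarrow{\phi}T^{\oplus m}\to N\to 0$. Put $K_0:=\operatorname{im}\phi$ (finitely generated, with trivial $G$-action) and $K:=M^\vee\otimes_R K_0$; exactness of $M^\vee\otimes_R(-)$ yields $0\to K\to(M_T^\vee)^{\oplus m}\to M^\vee\otimes_R N\to 0$ together with a $G$-equivariant surjection $(M_T^\vee)^{\oplus l}\twoheadrightarrow K$. Set $S:=\Sim_T^\bullet((M_T^\vee)^{\oplus m})$ and $A':=S^G=A(T^{\oplus m})$, and let $(K)\subseteq S$ denote the ideal generated by $K\subseteq S_1$. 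Then $\Sim_T^\bullet(M^\vee\otimes_R N)=S/(K)$, so applying the exact functor $(-)^G$ to $0\to(K)\to S\to S/(K)\to 0$ identifies the kernel of $A'\twoheadrightarrow A$ with $(K)^G$. The $G$-equivariant $S$-linear multiplication $K\otimes_T S\twoheadrightarrow(K)$, precomposed with the surjection $(M_T^\vee)^{\oplus l}\otimes_T S\twoheadrightarrow K\otimes_T S$, becomes after $(-)^G$ a surjection of $A'$-modules $P^{\oplus l}\twoheadrightarrow(K)^G$, where $P:=\big(M_T^\vee\otimes_T S\big)^G$. Hence it suffices to prove $P$ is a finitely generated $A'$-module: then $(K)^G$ is a finitely generated ideal of $A'$, and $A=A'/(K)^G$ is finitely presented over $T$, being a quotient of the finitely presented $T$-algebra $A'$ by a finitely generated ideal.

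\textbf{The one substantial point.} To see $P$ is finitely generated over $A'$, realize it inside $B:=\Sim_T^\bullet((M_T^\vee)^{\oplus(m+1)})\cong S\otimes_T\Sim_T^\bullet(M_T^\vee)$, graded by the degree $e$ in the last copy of $M_T^\vee$. This $\Z_{\geq 0}$-grading is $G$-stable with $B^{(0)}=S$ and $B^{(1)}=M_T^\vee\otimes_T S$, so $B^G=\bigoplus_{e\geq 0}(B^{(e)})^G$ is a graded $T$-algebra with degree-zero part $A'$ and degree-one part $P$. By Theorem~\ref{t:Invariant de Sim de Mm es fin pres}, $B^G$ is a finitely generated $T$-algebra, hence generated by finitely many $e$-homogeneous elements $\eta_1,\dots,\eta_r$ of degrees $e_1,\dots,e_r$; inspecting monomials, $A'$ is generated over $T$ by $\{\eta_i:e_i=0\}$ and $P=(B^G)_1$ is generated over $A'$ by $\{\eta_i:e_i=1\}$. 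This is the main obstacle: the naive hope that $S$ is module-finite over $A'$ is false already for $\mathbb{G}_m$ scaling a line, so the generators of $(K)^G$ cannot be bounded directly; embedding $M_T^\vee\otimes_T S$ as the degree-one piece of $B$ imports the needed finiteness from the free case with one extra copy, with no Noetherian hypothesis entering anywhere.
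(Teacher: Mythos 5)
Your proposal is correct, and its overall skeleton is the paper's: reduce everything to the free case settled by Theorem~\ref{t:Invariant de Sim de Mm es fin pres}, then propagate along a surjection (finitely generated and finitely presented cases) or a filtered colimit via Lazard (flat case). One small slip in the flat case: you attribute flatness of $A(T^{\oplus m})=\big(\Sim_T^\bullet((M_T^\vee)^{\oplus m})\big)^G$ to Theorem~\ref{t:Invariant de Sim de Mm es fin pres}, which only asserts finite presentation; flatness still needs the short separate argument the paper gives, namely that the Reynolds operator exhibits each graded piece $\big(\Sim_T^k((M_T^\vee)^{\oplus m})\big)^G$ as a $T$-module direct summand of $\Sim_T^k((M_T^\vee)^{\oplus m})$, which is finitely generated projective, hence flat. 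Since you list the Reynolds direct-summand property among your ingredients, this is a citation slip rather than a real gap.

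Where you genuinely diverge is the finitely presented case, and there your route is the more complete one. The paper likewise reduces to showing that the invariants of the ``relations module'' --- in its notation $\widetilde N=(M^\vee\otimes_R N'')\otimes_T\Sim_T^\bullet(M^\vee\otimes_R N')$, which for free $N''=T^{\oplus l}$ is exactly your $(M_T^\vee\otimes_T S)^{\oplus l}=P^{\oplus l}$ --- form a finitely generated module over $B=A'$, but it justifies this in one line: choose a surjection $\big(\Sim_T^\bullet(M^\vee\otimes_R N')\big)^{\oplus k}\to\widetilde N$ and invoke linear reductivity and ``functoriality of the Reynolds operator.'' As you observe, such a surjection cannot in general be chosen $G$-equivariantly and the Reynolds operator is not multiplicative, so invariants do not pass through it directly; the paper's step is at best terse. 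Your replacement --- realizing $M_T^\vee\otimes_T S$ as the degree-one component of the auxiliary grading on $\Sim_T^\bullet\big((M_T^\vee)^{\oplus(m+1)}\big)$, applying Theorem~\ref{t:Invariant de Sim de Mm es fin pres} to $m+1$ copies, and sorting homogeneous generators so that $P$ is generated over $A'$ by the degree-one ones --- is the classical ``add one more copy of the representation'' device, and it gives a complete, Noetherian-free proof of precisely the module-finiteness the paper needs, together with the clean identification of the kernel of $A'\twoheadrightarrow A$ as the finitely generated ideal $(K)^G$. So what your approach buys is a fully justified key step (and an explicit presentation $A=A'/(K)^G$), at the cost of one extra free summand; the paper's version is shorter but leans on an argument that, as written, does not go through verbatim.
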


Now, let us address the second issue mentioned at the beginning of this introduction: establishing conditions that imply the embedding of $\Proj A$ into a projective space.

A fundamental question in algebraic geometry is to determine when a given scheme is (quasi-)projective. As mentioned earlier, the Grassmannian $\Bbbk$-scheme~\eqref{E:Proj/SL is Proj^SL} mentioned at the beginning of this introduction is a homogeneous space with respect to the action of $\GL_{n}$. Therefore, in addition to the Plücker morphism, one can apply certain general results (see, for instance, \cite[Corollaire VI.2.6]{Raynaud}, \cite[Theorem 5.2.2]{Brion}) to obtain an equivariant compactification by a projective scheme. Among other related results, Nagata's compactification theorem is noteworthy. It states that any separated finite-type scheme over a Noetherian base admits an open immersion into a proper scheme over the base \cite{Nagata1, Nagata2}.

However, recall that in our study of $A$ (see~\eqref{E:definition A as invariants}), we do not want to impose any Noetherian or finiteness constraints. In order to handle this general situation, we introduce a new type of graded algebras called partially generated graded algebras (pgg). This type of algebra generalizes the notion of finite presentation since every graded $R$-algebra of finite presentation is a pgg (Proposition~\ref{p:finpres-pgg}) and, conversely, every Noetherian pgg-$R$-algebra is finitely presented (Corollary~\ref{C:noether+pgg imply finite pres}). 
Next, we prove a salient feature of these algebras: the projective spectrum (resp. spectrum) of a pgg-algebra admits a canonical closed immersion (resp. locally closed) into a projective space, which is the projectivization of a module, not necessarily of finite type.

Our second main result is as follows.

\begin{theorem}\label{T:closed immersion Proj affine}
    Let us consider $(\Bbbk, R, M, T, N, G, A)$ as described in Assumptions~\ref{assumptions}. Assume that one of the following conditions holds:
\begin{enumerate}
    \item $N$ is a $T$-module of finite presentation, 
    \item  $\Bbbk$ is a equi-characteristic zero noetherian commutative ring, $R=\Bbbk$, and $G$ is one of the four groups in~\eqref{E:classical groups},
    \item  $G$ is a finite group scheme over $\Bbbk$ (in this case, $\vert G\vert$ is invertible in $\Bbbk$).
\end{enumerate}

Then, there exists a natural number $t\geq 2$ such that 
	\begin{equation}\label{e:projectiveembeddingforpgg-intro}
	    \Phi:\Proj A \,\hookrightarrow\, 
	{\mathbb P} ( [ \Sim^{\bullet}(A_{\leq t} )]_{(t+1)!}^{\vee} )
	\end{equation}
is a canonical closed embedding of $T$-schemes fulfilling the following properties:
\begin{enumerate}
    \item $\Phi$ is functorial on $T$.
    \item $\Phi$ is equivariant with respect to the natural action of  $\Aut_{\gAlg(T)}(A)$. In particular, it is equivariant with respect to the induced action of $\Aut_{\Mod(T)}(N)$.
    \item $\operatorname{Im}\Phi$ is the intersection of hypersurfaces of degree bounded by $t$.
\end{enumerate}
 Moreover, in case $(2)$, $t=2 m$ if $G=\SL(M)$, and $t=2 + m$ for the other groups ($m=\underset{x\in\Spec(\Bbbk)}{\textrm{max}}\{\rk_{\Bbbk(x)}(M_{\Bbbk(x)})\}$), whereas in case $(3)$, $t=2 \vert G \vert$. 
\end{theorem}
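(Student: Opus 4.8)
The plan is to reduce the three cases to a single common statement: in each case, the graded $T$-algebra $A$ is a \emph{partially generated graded} algebra (pgg), and then invoke the general closed-immersion result announced for pgg-algebras to produce $\Phi$. So the first step is to establish the pgg property for $A$ in each of the three situations. In case $(1)$, $N$ is of finite presentation, so by Theorem~\ref{t:main-fin-pres} the $T$-algebra $A$ is of finite presentation, and by Proposition~\ref{p:finpres-pgg} (finite presentation implies pgg) we are done. In cases $(2)$ and $(3)$, $R=\Bbbk$ is Noetherian (resp. $\Bbbk$ is arbitrary but $G$ is finite with $|G|$ invertible), and here the pgg property must come not from finite presentation of $A$ as an abstract algebra but from an explicit description of generators and relations of $(\Sim^{\bullet}(M^{\vee}\otimes_{\Bbbk} N))^{G}$ in bounded degrees --- i.e. from the First and Second Fundamental Theorems for the classical groups in \eqref{E:classical groups} (case $(2)$), and from the averaging/Reynolds operator together with Noether's degree bound $|G|$ for finite groups (case $(3)$). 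The content of being a pgg is precisely that $A$ is generated in degrees $\leq t_{0}$ with relations in degrees $\leq t_{1}$ for explicit $t_{0},t_{1}$; the FFT/SFT give $t_{0},t_{1}$ in terms of $m$, and Noether's bound gives them in terms of $|G|$.

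The second step is to feed the pgg structure into the general construction. Once $A$ is pgg, truncating at level $t$ (for $t$ larger than both the generator and relation bounds) yields a finitely generated graded $T$-module $A_{\leq t}$, and the surjection $\Sim^{\bullet}_{T}(A_{\leq t})\twoheadrightarrow A$ (the Rees-type/Veronese presentation) realizes $\Proj A$ as a closed subscheme of $\Proj \Sim^{\bullet}(A_{\leq t})$. Passing to a suitable Veronese re-embedding --- here the $(t+1)!$-th power, which clears denominators coming from the several pieces $A_{1},\dots,A_{t}$ sitting in different degrees and makes the ambient $\Proj$ into an honest projective space ${\mathbb P}([\Sim^{\bullet}(A_{\leq t})]_{(t+1)!}^{\vee})$ --- gives the map $\Phi$ in \eqref{e:projectiveembeddingforpgg-intro}. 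Closedness of $\Phi$ is exactly the ``closed immersion for pgg'' statement referenced in the excerpt; functoriality in $T$ and equivariance under $\Aut_{\gAlg(T)}(A)$ follow because every step (truncation, symmetric algebra, Veronese, $\Proj$) is a functorial construction on the graded $T$-algebra $A$, and $\Aut_{\Mod(T)}(N)$ acts through $\Aut_{\gAlg(T)}(A)$ by functoriality of $\Sim^{\bullet}(M^{\vee}\otimes_{R} N)^{G}$ in $N$. Property $(3)$ of the theorem --- that $\operatorname{Im}\Phi$ is cut out by hypersurfaces of degree $\leq t$ --- translates back through the Veronese to the statement that the relations of the pgg-algebra $A$ live in degrees $\leq t$, which is how $t$ was chosen.

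The third step is the explicit bookkeeping of the numerical value of $t$ in case $(2)$. Here one spells out the FFT/SFT for each of the four groups: for $G=\SL(M)=\SL_{m}$ the invariants of $\Sim^{\bullet}(\Hom(\Bbbk^{m},N))$ are generated by the maximal minors (degree $m$ in the matrix entries) with Plücker-type quadratic relations among them, so after accounting for the bidegree the relevant truncation bound is $t=2m$; for $\GL(M)$, $\operatorname{O}(M)$, $\operatorname{Sp}(M)$ the FFT gives generators that are contractions/inner products (degree $2$) and the SFT gives relations of degree $m+1$ or so (vanishing of $(m+1)\times(m+1)$ minors of the Gram matrix), yielding $t=2+m$. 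One has to be careful that $m$ is defined fibrewise as the maximum rank over $\Spec\Bbbk$, so the bounds must hold uniformly; since $\Bbbk$ is Noetherian the rank is upper semicontinuous and takes finitely many values, so the maximum is attained and finite, and base change to residue fields reduces the FFT/SFT to the classical field case. In case $(3)$ the Noether bound $|G|$ for generators, combined with the analogous bound for relations (which after Veronese re-embedding costs another factor), gives $t=2|G|$.

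The main obstacle, in my view, is case $(2)$: the delicate point is not the general pgg machinery (which is essentially formal once the bounds exist) but extracting \emph{explicit, uniform} degree bounds for generators and relations of $(\Sim^{\bullet}(M^{\vee}\otimes_{\Bbbk}N))^{G}$ when $M$ is only a projective module of non-constant rank over a Noetherian ring $\Bbbk$ of equicharacteristic zero, rather than a vector space over a field. This requires (a) localizing/base-changing to make $M$ free, (b) citing the classical FFT/SFT over a field of characteristic zero, (c) checking the bounds are insensitive to the module $N$ and depend only on $m$, and (d) gluing the local statements --- or, more cleanly, observing that the invariants commute with flat base change for linearly reductive $G$ and that the characteristic-zero classical FFT/SFT hold over any commutative $\mathbb{Q}$-algebra, so one can work universally. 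Getting the precise constants $2m$ and $2+m$ right (as opposed to merely \emph{some} bound depending on $m$) is where the real care is needed.
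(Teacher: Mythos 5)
Your route is the paper's own: prove that $A$ is a $t$-pgg algebra (finite presentation in case (1); FFT/SFT degree bounds in case (2); Noether-type bounds in case (3)), then feed this into the general pgg machinery — the surjection $\Sim^{\bullet}_T(A_{\leq t})\twoheadrightarrow A$, the $(t+1)!$ Veronese, closedness, functoriality, equivariance and the degree-$t$ description of the image (Theorems~\ref{t:projectiveembedding finite degree generated} and~\ref{t:projectiveembeddingforpgg}) — and finish by noting that $\Aut_{\Mod(T)}(N)$ acts on $A$ because its action on $\Sim^{\bullet}_T(M^{\vee}\otimes_R N)$ commutes with that of $G$, which acts only through $M^{\vee}$. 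That skeleton is correct and matches Theorem~\ref{t:algebrainvariantispgg} plus the proof given in \S\ref{ss:affine case}.

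There is, however, one step you only gesture at and whose proposed fix does not cover the stated generality: in cases (2) and (3) the module $N$ is an \emph{arbitrary} $T$-module, while the FFT/SFT and the Noether bound give generator and relation degrees only for invariants of finitely many copies of $M^{\vee}$, i.e.\ for $N$ free of finite rank. Your item ``(c) the bounds are insensitive to $N$'' is precisely where the work lies, and ``working universally over $\mathbb{Q}$-algebras plus flat base change'' only disposes of the non-free projective $M$ (that part is fine, and is how Proposition~\ref{P:FFTSFT-SL} localizes); it does nothing for an infinitely generated $N$. The paper's mechanism is: for $N$ finitely presented, present it by free modules and use linear reductivity/Reynolds to get a surjection $B^{\oplus k}\to \widetilde N^{G}$, so that $A$ is a quotient of a $t$-pgg algebra by an ideal generated in degrees $\leq t$, hence $t$-pgg (Proposition~\ref{p:epi-pgg}); for general $N$, write $N=\varinjlim N_{\lambda}$ with $N_{\lambda}$ finitely presented, check $A=\varinjlim A_{\lambda}$ (again via Reynolds operators), and use that a direct limit of $t$-pgg algebras with the \emph{same} $t$ is $t$-pgg (Proposition~\ref{p:directlimitoftppgistpgg}). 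Without these two stability statements your argument proves the theorem only for $N$ free of finite rank. Two smaller corrections: in case (3) the factor $2$ in $t=2\vert G\vert$ is not a Veronese artifact but the syzygy bound for non-modular finite groups (relations among a minimal generating system in degree $\leq 2\vert G\vert$), paired with Noether's bound $\vert G\vert$ for generators; and the four groups of~\eqref{E:classical groups} are $\SL$, $\operatorname{Sp}$, $\operatorname{O}$, $\operatorname{SO}$ — $\GL$ is not among them.
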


Regarding the morphism~\eqref{e:projectiveembeddingforpgg-intro}, two comments are in order. Firstly, it can be used to study compactifications of homogeneous spaces (Remark~\ref{R:PGL/G}).Second, particular instances of it have already been considered when dealing with certain moduli spaces (Remark~\ref{R:applications moduli Schmitt}). 

The article is structured as follows.~\S\ref{sec:toy model} is concerned with  the example presented at the beginning and sketches proofs of the identities \eqref{E:Proj/SL is Proj^SL} and of their associated properties.

In~\S\ref{sec:preliminaries}, we introduce preliminary notions regarding representations of group schemes. It is noteworthy that throughout the paper, we assume the group $G$ to be linearly reductive (see, for instance, \cite{alper}). This assumption is motivated by two reasons: firstly, taking invariants by a linearly reductive group is an exact functor, and secondly, in characteristic zero, linear reductiveness coincides with reductiveness. For precise statements and references, please refer to~\S\ref{subsec:linearly red groups}.

Section~\ref{sec:finitess invariants} is dedicated to the proof of Theorem~\ref{t:main-fin-pres} (see~\S\ref{subsec:first main theorem}). The proof relies on a technical result that is a significant generalization of Seshadri's theorem \cite[Theorem 2]{seshadri-relative} to the relative and non-Noetherian setting. Specifically, employing standard Noetherian reduction arguments, we establish Theorem~\ref{t:Invariant de Sim de Mm es fin pres}, which asserts that $\big( \Sim_{R}^\bullet (M^{\vee})^{\oplus m}\big)^{G}$ is a finitely presented $R$-algebra under the assumptions of~\ref{assumptions}.

In~\S\ref{sec:pgg}, we introduce the concept of a partially generated graded algebra (see~\S\ref{subsec:definition pgg}). We define a graded $R$-algebra as a $t$-pgg-algebra if it is generated by elements of degree less than or equal to $t$, and the ideal of relations between the generators satisfies the same condition. Furthermore, for a given $t$-pgg $R$-algebra $A$, we establish that the $R$-scheme $\Proj A$ (resp. $\Spec A$) admits a canonical equivariant (resp. locally) closed embedding into a projective space (see~\S\ref{subsec:projective embd pgg}).

Section~\ref{sec:algebra of invariants} is devoted to the proof of Theorem~\ref{T:closed immersion Proj affine} and other related results. The key step involves demonstrating that $A$ \eqref{E:definition A as invariants} is a $t$-pgg algebra for a specific value of $t$, followed by the application of the results from~\S\ref{sec:pgg}. This section concludes by addressing potential applications of our findings. Notably, the constructions of moduli spaces of principal bundles based on tensors (e.g., \cite{bhosle,langer,munoz1,schmitt}) can significantly benefit from our results and techniques, as our work establishes new properties (e.g., flatness, functoriality, canonicity) of the main objects presented in those papers.

Throughout the paper, we fix a commutative ring $\Bbbk$. By a $\Bbbk$-algebra, we refer to a commutative $\Bbbk$-algebra. For a commutative ring $R$, $\Alg(R)$ denotes the category of commutative $R$-algebras, and $\Mod(R)$ denotes the category of $R$-modules. In this context, $\Sim_R^{\bullet}(M)$ represents the symmetric algebra of an $R$-module $M$, defined as the quotient of the tensor algebra of $M$ by the ideal generated by $m\otimes m' - m'\otimes m$ for $m, m'\in M$ (see \cite[Chapter II, \S1.7]{ega2}). We denote $\P(M^{\vee})$ as the homogeneous spectrum $\Proj(\Sim_R^{\bullet}(M))$. Lastly, by group scheme over a $\Bbbk$-algebra $R$ we mean a faithfully flat, finitely presented, and separated affine group $R$-scheme.

\section{A Toy Model}\label{sec:toy model}

In order to be self-contained, let us sketch the proof of~\eqref{E:Proj/SL is Proj^SL} and the claims related to it. Let $(\Bbbk, R, M, T, N, G, A)$ be as in Assumptions~\ref{assumptions} and assume furthermore that $\Bbbk$ is a (algebraically closed) field, $R=T= \Bbbk$,  $M=\Bbbk^{m}$, $N=\Bbbk^{n}$ with $n\geq m$ and $G=\SLm$.  
In terms of classical invariant theory, $A$ is isomorphic to the subring of invariants of the ring of polynomial functions of the set of matrices $\operatorname{Mat}_{m, n}(\Bbbk)$ with respect to action on $\SLm$ on the right, that is, 
\[
A=(\Sim^{\bullet}\Hom(\Bbbk^{m},\Bbbk^{n}))^{\SLm}.
\]
Note, however, that the set $\operatorname{Mat}_{m, n}(\Bbbk)$ can be thought as the set of rational points of the scheme of homomorphisms 
\[
\HOM(\Bbbk^{n}, \Bbbk^{m}):= \Spec \Sim^{\bullet}\Hom(\Bbbk^{m},\Bbbk^{n})
\]
so that geometric invariant theory can be applied. 

We recall now the main algebraic and geometric properties of the $\Bbbk$-algebra $A$.

\subsection{Generators and relations} (see~\cite[\S3]{deConcini-Procesi}). For each multiindex $J=(i_1,\hdots,i_m)$ with $1\leq i_j\leq  n$, we define $d_{J}\in \Sim^{\bullet}\Hom(\Bbbk^{m},\Bbbk^{n})$ as the minor of the general matrix formed by the columns\ indexed by the elements of $J$. These functions are not identically zero if there are no repeated indexes. Clearly, the polynomial functions $d_{J}$ are $\SLm$ invariant and, furthermore, $(\Sim^{\bullet}\Hom(\Bbbk^{m},\Bbbk^{n}))^{\SLm}$ is generated by the (finite) set $\bigwedge(m,n):=\{d_{J}| \ J=(i_1,\hdots,i_m) \textrm{ with } 1\leq i_1 < \cdots < i_{m}\leq n\}$.
    Now, for each pair of multiindexes $J=(i_1,\hdots,i_{m-1}), \ K=(k_1,\hdots,k_{m+1})$, it holds 
    \begin{equation}
        \label{E:Plucker relation}
     \sum_{u=1}^{m+1}(-1)^{u}d_{J\cup \{k_u\}}\cdot d_{K\setminus \{k_u\}}=0.   \end{equation}

    If we denote by $I$ the ideal of $\Bbbk\langle\bigwedge(m,n)\rangle$ generated by all the elements of the form $\sum_{u=1}^{m+1}(-1)^{u}d_{J\cup \{k_u\}}\cdot d_{K\setminus \{k_u\}}$ (we let $J$ and $K$ to vary) we get an isomorphism 
    \begin{equation}
        \label{E:Hom^SLm is k<m,n>/I}
    A=(\Sim^{\bullet}\Hom(\Bbbk^{m},\Bbbk^{n}))^{\SLm}\simeq \Bbbk\left\langle\bigwedge(m,n)\right\rangle/I.    
    \end{equation}

    \subsection{Moduli and GIT quotient} The inclusion of graded algebras $(\Sim^{\bullet}\Hom(\Bbbk^{m},\Bbbk^{n}))^{\SLm}\subset\Sim^{\bullet}(\Hom(\Bbbk^{m},\Bbbk^{n}))$ defines a rational map 
    $$\pi:\Proj(\Sim^{\bullet}(\Hom(\Bbbk^{m},\Bbbk^{n}))) \dashrightarrow \Proj((\Sim^{\bullet}\Hom(\Bbbk^{m},\Bbbk^{n}))^{\SLm})$$
    defined on an open subset $U$. 
    The points of $U$ are called semistable points and are characterized as follows: a rational point $p\in \Proj(\Sim^{\bullet}(\Hom(\Bbbk^{m},\Bbbk^{n})))$ belongs to $U$ if and only if there is a $\SLm$-invariant homogeneous polynomial function not vanishing at $p$. 
The following conditions are equivalent
\begin{enumerate}
    \item $p$ is semistable in the sense of GIT,
    \item $p$ is stable in the sense of GIT; that is, its orbit under $\SL_m$ is closed and its subgroup of isotropy is finite,
    \item $\widehat{p}:\Bbbk^{n} \to \Bbbk^{m}$ is surjective, $\widehat{p}$ being any representative of $p$ in $\HOM(\Bbbk^{n}, \Bbbk^{m})$, 
    \item there exists $J$ such that $d_{J}(\widehat{p})\neq 0$, $\widehat{p}$ being any representative of $p$ in $\HOM(\Bbbk^{n}, \Bbbk^{m})$. 
\end{enumerate}
    The previous discussion have a bunch of consequences. First, 
    \[
    \left\{\begin{array}{l} \textrm{Surjective maps}\\ \Bbbk^{n} \to \Bbbk^{m} \end{array}\right\}=\bigcup_{J}\Spec(\Sim^{\bullet}(\Hom(\Bbbk^{m},\Bbbk^{n})))_{d_{J}}\rightarrow U
    \]
    is a geometric quotient for the action of the multiplicative group over $\Bbbk$, that is, a line bundle over $U$. Here the subindex $d_{J}$ denotes localization.
    Secondly, the map 
    \[ 
    U\rightarrow \big(\Proj(\Sim^{\bullet}\Hom(\Bbbk^{m},\Bbbk^{n}))\big)/{\SLm} \,=\,
    \Proj((\Sim^{\bullet}\Hom(\Bbbk^{m},\Bbbk^{n}))^{\SLm})
    \]
    is a geometric quotient  for the action of the group $\SLm$  on the right. That is, $\Proj((\Sim^{\bullet}\Hom(\Bbbk^{m},\Bbbk^{n}))^{\SLm})$ parametrizes closed orbits. Finally, there is a canonical isomorphism 
    \begin{equation}
        \label{e:Proj^SL=Grass-intro}
    \Proj((\Sim^{\bullet}\Hom(\Bbbk^{m},\Bbbk^{n}))^{\SLm})\simeq \textrm{Grass}_{m}(\Bbbk^{n}),
    \end{equation}
    the later space being the Grassmannian of $m$-dimensional quotients  of $\Bbbk^{n}$.
    
    \subsection{Projective embedding} ~\eqref{E:Hom^SLm is k<m,n>/I} induces a surjection $\Bbbk\langle \bigwedge (m,n)\rangle\rightarrow (\Sim^{\bullet}(\Hom(\Bbbk^{m},\Bbbk^{n})))^{\SL_m}$ and, therefore, a closed embedding (Pl\"ucker embedding)
    \begin{equation}
        \label{e:Plucker for Grass-intro}
    \operatorname{Grass}_{m}(\Bbbk^{n})
    \hookrightarrow  \Proj \ \Bbbk\langle \bigwedge (m,n)\rangle\simeq 
    \P ((\wedge^{m} \Bbbk^{n})^{\vee}).
    \end{equation}
    Noting~\eqref{E:Plucker relation}, one concludes that the image is given by intersection of quadrics; namely, the intersection of the zero sets of a generator system of $I$. Furthermore, the Plücker morphism is $\SL_{n}$-equivariant where $\SL_{n}$ acts on the left. 

\subsection{Comments} 
Many aspects of the previous discussion deserve further explanation. It is worth noting that the existence of a system of generators for $A$ that are homogeneous of degree $m$ (see~\eqref{E:Hom^SLm is k<m,n>/I}) and the fact that the ideal of relations among them is generated by elements of degree two (see~\eqref{E:Plucker relation}) are closely linked to our concept of partially generated graded algebra introduced in~\S\ref{sec:pgg}. Additionally,~\eqref{e:Proj^SL=Grass-intro} and~\eqref{e:Plucker for Grass-intro} can be generalized for any ring $\Bbbk$ and any $\Bbbk$-module $N$ (see \cite[Chapter I, \S9.7, \S9.8]{EGA-I-springer}). An analogous situation also exists for the case of the Sato Grassmannian (see Example~\ref{ex:sato}) which is not a scheme of finite type. We will revisit this type of construction for arbitrary schemes in \S\ref{sec:algebra of invariants} (for a connection with compactifications of homogeneous spaces, see Remark~\ref{R:PGL/G}).


\section{Preliminaries on representations}\label{sec:preliminaries}

Let $G$ be a group scheme over $\Bbbk$. For each  $\Bbbk$-algebra $R$, we denote by $G_{R}$ the group $R$-scheme obtained from $G$ by base change respect to the  morphism  $\Spec(R) \to \Spec(\Bbbk)$. The ring of functions of $G_{R}$ will be denoted by $R[G]$ and its associated group-functor by $\underline{G}_{R}$, so $G_{R}=\Spec (R[G])$ and $R[G]=\Bbbk[G]\otimes_{\Bbbk}R.$ We will drop the subscript in $G_{R}$ if $R=\Bbbk$.


\subsection{Representations}
\label{subsec:representations}

In this section we establish the notations, and state the basic notions, about representations of group schemes. We we will follow the fundamental works \cite{jantzen,seshadri-relative}.

Given a $\Bbbk$-algebra $R$ and an $R$-module $M$, we denote by $\underline{\End}_{\Mod(R)}(M)$ the functor of endomorphisms  of $M$:
\begin{equation}\label{e:functorEnd}
    \begin{aligned}
    \underline{\End}_{\Mod(R)}(M):\Alg(R) & \to \text{Set} \\
    T & \leadsto  \End_{\Mod(T)}(M\otimes_{R}T)
    \end{aligned}
\end{equation}
The functor of automorphisms of $M$, $\underline{\Aut}_{\Mod(R)}(M)$, is defined analogously.

If $M$ is a finitely generated and projective $R$-module, the symmetric algebra $\Sim_{R}^{\bullet}(\End_{\Mod(R)}(M)^{\vee})$ is a representative of $\underline{\End}_{\Mod(R)}(M)$, that is, for each $R$-algebra $T$, there is an isomorphism
    \[
    \underline{\End}_{\Mod(R)}(M)(T) \,\simeq\, \Hom_{\Alg(R)}( \Sim_{R}^{\bullet}(\End_{\Mod(R)}(M)^{\vee}) , T)
    \]
which behaves functorially on $T$. Furthermore,  we may localize $\Sim_{R}^{\bullet}(\End_{\Mod(R)}(M)^{\vee})$  by the determinant function, and the functor of points of the corresponding affine $R$-scheme is exactly $\underline{\Aut}_{\Mod(R)}(M)$. 
If $M$ is trivial of rank $m$, we use the standard notation $\underline{\GLm}(R)$ instead of $\underline{\Aut}_{\Mod(R)}(R^{\oplus m})$.

A linear representation of a group $\Bbbk$-scheme, $G$,  over $R\in \Alg(\Bbbk)$, is a pair $(M,\rho)$ where $M$ is an $R$-module, and $\rho:\underline{G}_{R}\rightarrow \underline{\Aut}_{\Mod(R)}(M)$ is a morphism of group functors. Note that giving $\rho$ is the same as giving a morphism of groups $\rho_{T}:\underline{G}_{R}(T)\rightarrow \underline{\Aut}_{\Mod(T)}(M\otimes_{R}T)$ for each $R$-algebra $T$ in a functorial way. In such situation, it is said that $M$ is $G$-$R$ module.
If $M$ has an additional structure of $R$-algebra, then a representation of $G$ over $R$ is a morphism $\rho$ as above whose image lies in $\underline{\Aut}_{\Alg(R)}(M)$. 
In any case, we say that $G_{R}$ acts on $M$.

Furthermore, the datum $\rho$ yields a morphism of $R$-modules:
\begin{equation}\label{E:co-module}
\begin{split}
\tilde\rho:M&\rightarrow M\otimes_{R}R[G]\\
m&\mapsto \rho_{R[G]}(\Id_{G_{R}})(m\otimes 1),
\end{split}
\end{equation}
which makes $M$ a co-module over the co-algebra $R[G]$. 

Finally, let us discuss the case where $M$ is finitely generated and projective. On the one hand, giving a morphism $M\rightarrow M\otimes_{R}R[G]$ is, in turn, equivalent to giving a morphism $R$-modules:
$$
\End_{\Mod(R)}(M)^{\vee}\rightarrow R[G]=\Bbbk[G]\otimes_{\Bbbk}R.
$$
Taking $\Spec$ on the induced morphism of algebras $\Sim_{R}^{\bullet} (\End_{\Mod(R)}(M)^{\vee})\rightarrow R[G]$
yields the representation $\rho:\underline{G}_{R}(T)\rightarrow \underline{\Aut}_{\Mod(R)}(M)$.

On the other hand, since   there is a canonical isomorphism $M^{\vee}\otimes_{R}T\simeq (M\otimes_{R}T)^{\vee}$, 
any representation $\rho:\underline{G}_{R}\rightarrow\underline{\Aut}_{\Mod(R)}(M)$ induces naturally a representation (the contragradient representation) $\rho^{\vee}:\underline{G}_{R}\rightarrow 
\underline{\Aut}_{\Mod(R)}(M^{\vee})$ defined by the rule:
\begin{equation}\label{E:contragradient}
\begin{split}
\rho_{T}^{\vee}(g): (M\otimes_{R} T)^{\vee} & \rightarrow (M\otimes_{R}T)^{\vee}\\
\varphi &\mapsto  \varphi \circ \rho_{T}(g), 
\end{split}
\end{equation}
and the co-module structure induced on $M^{\vee}$ by the contragradient representation is given by:
\begin{equation}\label{co-grad}
\begin{split}
M^{\vee} & \rightarrow  \Hom_{\Mod(R)}(M,R[G]) \simeq M^{\vee}\otimes_{R}R[G] \\
\delta &\mapsto  (\delta\otimes 1) \circ \rho .
\end{split}
\end{equation}


\subsection{Linearly reductive groups}
\label{subsec:linearly red groups}

Given a $\Bbbk$-algebra $R$ and a group scheme $G$ over $R$, we denote by $BG$ the Artin stack $[\Spec(R)/G_{R}]$. It carries a structure morphism $f:BG\rightarrow \Spec(R)$.
\begin{definition}\cite[Definition 12.1]{jalper}
The group scheme $G$ is called linearly reductive if $f:BG\rightarrow \Spec(R)$ is cohomologically affine. 
\end{definition}

If we are given an $R$-module $M$ and a representation $\rho:\underline{G}\rightarrow\underline{\Aut}_{\Mod(R)}(M)$ (i.e., $M$ is a $G$-$R$ module), we denote by $M^{G}$ the $R$-submodule of $M$ consisting of invariant elements. The operation ``taking invariants by $G$'' defines a functor from the category of $G$-$R$-modules to the category of $R$-modules
\begin{equation}
\begin{split}
(-)^{G}:\Mod(G\operatorname{-}R)&\rightarrow \Mod(R)\\
M&\leadsto M^{G} ,
\end{split}
\end{equation}
which is left exact since $G$ is assumed to be flat over $R$.

\begin{remark}
Let $T$ be an $R$-algebra and $N$ a $G_{T}$-$T$-module. For the sake of notation, we write $N^{G}$ for the submodule of $G_T$-invariants of the $T$-module $N$.
\end{remark}

Linearly reductive group schemes have been studied exhaustively and characterized in many ways (see for instance \cite{abramovich,alper,sancho,kemper-linear,Springer}).
We summarize now some of the properties of linearly reductive group schemes that will be used along this work.
\begin{proposition}\label{prop:linearly}
Let $G$ be a group scheme over $R$. Then, the following properties hold true:
\begin{enumerate}
\item $G$ is linearly reductive if and only if $(-)^{G}$ is exact.
\item If $G$ is linearly reductive and $T$ is an $R$-algebra, then the base change $G_{T}:=\Spec(T[G])$ is linearly reductive.
\item If $G$ is linearly reductive, $M$ a $G$-$R$ module and $T$ an $R$-algebra, then $M^{G}\otimes_{R}T\simeq (M\otimes_{R}T)^{G}$.
\item If $G$ is linearly reductive, $R$ is noetherian and $T$ is a finitely generated $R$-algebra acted on by $G_T$, then $T^{G}$ is a finitely generated $R$-algebra.
\item If the fibers of $G$ over closed points are linearly reductive and $R$ is noetherian, then $G$ is linearly reductive. 
\end{enumerate}
\end{proposition}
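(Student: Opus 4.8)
The plan is to rephrase everything through the classifying stack $BG=[\Spec(R)/G_{R}]$ and its structure morphism $f$, using the standard dictionary identifying $\mathrm{QCoh}(BG)$ with the category of $G$-$R$-modules (i.e.\ of comodules over the coalgebra $R[G]$, which is $R$-flat by our conventions on group schemes), under which $f_{*}$ is carried to the invariants functor $(-)^{G}$. Item $(1)$ is then the unwinding of the definition: ``$f$ cohomologically affine'' means precisely that $f_{*}$ is exact on quasi-coherent sheaves, i.e.\ that $(-)^{G}$ is exact. For $(2)$ one uses that $BG_{T}=BG\times_{\Spec(R)}\Spec(T)$ and that cohomological affineness of morphisms of stacks is stable under arbitrary base change (cf.\ \cite{alper,jalper}), so that $f_{T}\colon BG_{T}\to\Spec(T)$ remains cohomologically affine. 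For $(3)$, observe that $f$ is even a good moduli space morphism — it is cohomologically affine by hypothesis, and $f_{*}\O_{BG}=R$ because $\O_{BG}$ corresponds to the trivial $G$-$R$-module $R$ — and that the formation of $f_{*}$ along a good moduli space morphism commutes with arbitrary base change \cite{alper}; applied to the sheaf attached to $M$, this gives the canonical isomorphism $M^{G}\otimes_{R}T\simeq(M\otimes_{R}T)^{G}$. Equivalently, $(1)$ produces a functorial $G$-equivariant retraction $\mathcal R_{M}\colon M\to M^{G}$ of $M^{G}\hookrightarrow M$ (the Reynolds operator, which for linearly reductive $G$ is induced by a normalized integral of $R[G]$); this operator is manifestly compatible with $-\otimes_{R}T$, and passing to images yields $(3)$.

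For item $(4)$ I would reduce to a graded polynomial algebra and then invoke the Hilbert--Nagata finiteness argument. Since $R[G]$ is $R$-flat, every $G$-$R$-module is the filtered union of its finitely generated $G$-$R$-submodules \cite{jantzen}; pick such a submodule $V\subset T$ containing a finite set of $R$-algebra generators of $T$, giving a $G$-equivariant surjection $\Sim_{R}^{\bullet}(V)\twoheadrightarrow T$ from a finitely generated graded $R$-algebra carrying a grading-preserving $G$-action. Applying the exact functor $(-)^{G}$ turns this into a surjection $(\Sim_{R}^{\bullet}V)^{G}\twoheadrightarrow T^{G}$, so it suffices to treat $T=\Sim_{R}^{\bullet}(V)$, which is noetherian because $R$ is. Then the ideal $T\cdot T^{G}_{+}$ of $T$ admits a finite set of homogeneous generators $f_{1},\dots,f_{r}\in T^{G}$, and an induction on degree — using that $\mathcal R\colon T\to T^{G}$ is degree-preserving and $T^{G}$-linear (by naturality of $\mathcal R$ applied to multiplication by an invariant) — shows $T^{G}=R[f_{1},\dots,f_{r}]$. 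Hence $T^{G}$, and therefore the original $T^{G}$, is a finitely generated $R$-algebra.

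Item $(5)$ is a Noetherian-descent statement and, I expect, the main obstacle. The goal is to deduce exactness of $(-)^{G}$ over $R$ from linear reductivity of the fibres $G_{\kappa(x)}$ at closed points $x\in\Spec(R)$; since $G$-cohomology commutes with filtered colimits, it suffices to show $H^{1}(G,M)=0$ for every finitely generated $G$-$R$-module $M$. Granting that $H^{1}(G,M)$ is a finitely generated (hence coherent) $R$-module — the technical heart, since $R[G]$ is not module-finite over $R$ — one argues that if $H^{1}(G,M)\neq 0$ then its support is a nonempty closed subset of the noetherian scheme $\Spec(R)$ and so contains a closed point $x$, whence $H^{1}(G,M)\otimes_{R}\kappa(x)\neq 0$ by Nakayama; a base-change argument for the Hochschild complex (whose positive-degree terms are $R$-flat) then produces a nonzero class in $H^{1}(G_{\kappa(x)},M\otimes_{R}\kappa(x))$, contradicting the linear reductivity of $G_{\kappa(x)}$. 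Thus $H^{1}(G,M)=0$ for all $M$, and $G$ is linearly reductive. In a preliminaries section one would more likely simply cite the known fact that, over a noetherian base, cohomological affineness (equivalently, linear reductivity) may be tested on the fibres over closed points, cf.\ \cite{abramovich,alper,jalper}.
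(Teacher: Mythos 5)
The paper's own proof of this proposition is purely by citation: items (1)--(3) are attributed to \cite{jalper}, item (4) is deduced from power-reductivity of linearly reductive groups over a noetherian base (\cite{Franjou} together with \cite[Proposition 7]{power-red}), and item (5) is \cite[Theorem 1.2]{Margaux} combined with (1). Your proposal instead supplies direct arguments, and for (1)--(4) these are sound: (1) is just the translation of cohomological affineness of $BG\to\Spec(R)$ through the equivalence between quasi-coherent sheaves on $BG$ and $R[G]$-comodules; (2) follows from stability of cohomological affineness under the (affine) base change $\Spec(T)\to\Spec(R)$; (3) via base change for good moduli space push-forwards, or equivalently via the integral-induced Reynolds operator, recovers what the paper takes from \cite{jalper} (and \cite{sancho}); and in (4) you replace the paper's power-reductivity route by the classical Hilbert--Nagata argument (local finiteness of comodules over the flat coalgebra $R[G]$, reduction to a graded algebra $\Sim^{\bullet}_{R}(V)$ with $V$ a finitely generated subcomodule, noetherianity, and induction on degree using the $T^{G}$-linearity of the Reynolds operator). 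That route is more self-contained and more transparent than the paper's, at the cost of invoking standard but nontrivial inputs (naturality of the Reynolds operator, local finiteness of comodules as in \cite{jantzen}).

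The one place where your text, read as a self-contained proof, has a genuine gap is item (5). The coherence of $H^{1}(G,M)$ for a finitely generated $G$-$R$-module $M$, which you explicitly ``grant'', is not a routine fact: $R[G]$ is not module-finite over $R$, and this finiteness is essentially the hard content hidden in the fibrewise criterion. Moreover, the parenthetical claim that the positive-degree terms of the Hochschild complex are $R$-flat is false for non-flat $M$ (those terms are $M\otimes_{R}R[G]^{\otimes n}$), so the step passing from $H^{1}(G,M)\otimes_{R}\Bbbk(x)\neq 0$ to a nonzero class in $H^{1}(G_{\Bbbk(x)},M\otimes_{R}\Bbbk(x))$ would also need repair. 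Your fallback --- simply citing that linear reductivity over a noetherian base may be tested on the fibres over closed points --- is exactly what the paper does via \cite[Theorem 1.2]{Margaux}, so with that citation in place the proposal matches the intended proof; without it, (5) is not established.
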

\begin{proof}
Properties (1), (2) and (3) can be found in \cite{jalper}. Linearly reductive group schemes over a noetherian base are power-reductive \cite{Franjou}, so (4) follows from \cite[Proposition 7]{power-red}. Statement (5) follows from \cite[Theorem 1.2]{Margaux} and (1).
\end{proof}

Given a $G$-$R$-module $M$, a Reynolds operator is a $G$-equivariant and $R$-linear surjection, 
$$\mathcal{R}:M\rightarrow M^{G},$$ 
giving the identity when is restricted to $M^{G}$.
In particular, this makes $M^{G}$ a direct summand of $M$.
If $G$ is linearly reductive, there is always a Reynolds operator for a given $R$-module (see \cite[Theorem 2]{sancho}, \cite[Theorem 1.10]{sancho2}). 

As examples of linearly reductive groups we mention finite groups $G$ over $\Bbbk$ such that $\vert G \vert$ is invertible in $\Bbbk$. Recall that, over a field of characteristic $0$, a group is linearly reductive if and only if it is reductive. Thus, the groups $\GLm$, $\SLm$, $\operatorname{Sp_{2m}}$, $\operatorname{O}_{m}$ and $\operatorname{SO}_{m}$ (see~\ref{E:classical groups}) are examples of linearly reductive groups over a equi-characteristic noetherian commutative ring $\Bbbk$ of characteristic $0$.

\begin{remark}
An important advance in relation to 14th Hilbert's problem is found in the works of Franjou and van der Kallen, where the notion of power-reductive group is introduced. In \cite{Franjou}, the authors prove that $A^{G}$ is finitely generated for any $A$ (finitely generated over Noetherian $\Bbbk$) if and only if $G$ is power-reductive. It is important to note that in this article we are dealing with linearly reductive rather than power-reductive groups $G$ because the Noetherian induction arguments require that the operation "take invariants" have nice base change properties. This is true for linearly reductive groups but not, in general, for other types of groups (see \cite{kemper-linear}). 

It is important to note that, although it is true that in positive characteristic the condition of being linearly reductive is very restrictive (they are finite groups, tori and finite products of these, \cite[Théoréme de Nagata]{gabriel}), in characteristic zero the properties linearly reductive, geometrically reductive, reductive and power-reductive are all the same.
\end{remark}


\section{Finiteness properties of the subalgebra of invariants}\label{sec:finitess invariants}

This section proves Theorem~\ref{t:main-fin-pres}. To achieve this, we begin generalizing to an arbitrary base (i.e. non necessarily noetherian) Proposition~\ref{prop:linearly} (4). Because of the interest in its own, due to the historical link with Hilbert 14th problem, we devote a first subsection to this generalization (see Theorem~\ref{t:Invariant de Sim de Mm es fin pres}).


\subsection{Noetherian reduction}


We recall the fundamental lemmas regarding direct limits, and prove others, that will be needed along this section. If the transition maps $\phi_{ij}$ defining a direct limit $(M_i, \phi_{ij})$ are clearly established, and no confusion arises, we will drop them from the notation for the sake of clarity.

Let $R_0$ be a commutative ring, $\{R_{\lambda}\}_{\lambda\in I}$ a direct system of $R_0$-algebras and $R$ the direct limit $\varinjlim  R_{\lambda}$. If $M_{0}$ is an $R_0$-module, we can set $M_{\lambda}:=M_{0}\otimes_{R_{0}}R_{\lambda}$. Since  $M_{\beta}$ and $ M_{\lambda}\otimes_{R_{\lambda}}R_{\beta}$ are canonically isomorphic, there is a canonical morphism $M_{\lambda}\rightarrow M_{\beta}$ for each pair $\lambda\leq \beta \in I$, namely, $m\mapsto m\otimes 1$. Thus, $\{M_{\lambda}\}_{\lambda\in I}$ forms a direct system and we define $M := \varinjlim  M_{\lambda}$. If $N_{0}$ is another $R_{0}$-module, the same discussion yields a direct system $\{N_{\lambda}\}_{\lambda\in I}$, and we define $N := \varinjlim  N_{\lambda}$. Note that the tensor product operation $-\otimes_{R_{\lambda}}R_{\beta}$ induces a morphism
$$
\Hom_{\Mod(R_{\lambda})}(M_{\lambda},N_{\lambda})\longrightarrow \Hom_{\Mod(R_{\beta})}( M_{\beta},N_{\beta})
$$
for each pair $\lambda\leq \beta \in I$. Thus, we also have a direct system $\{\Hom_{\Mod(R_{\lambda})}(M_{\lambda},N_{\lambda})\}_{\lambda\in I}$ and a canonical morphism
\begin{equation}\label{eq:mor-fp}
\theta : \varinjlim  \Hom_{\Mod(R_{\lambda})}(M_{\lambda},N_{\lambda})\rightarrow \Hom_{\Mod(R)}(N,M)
\end{equation}

\begin{lemma}\cite[Lemma 8.5.2.1]{ega4-3}\label{lm:noeth-red-1}
Let $R_0$ be a commutative ring, $\{R_{\lambda}\}_{\lambda\in I}$ a direct system of $R_0$-algebras, $R=\varinjlim  R_{\lambda}$; let $M_0 , N_0$ be $R_0$-modules, and let us denote $M_{\lambda}=M_{0}\otimes_{R_{0}}R_{\lambda}, N_{\lambda}=N_{0}\otimes_{R_{0}}R_{\lambda}$, $M=M_{0}\otimes_{R_{0}}R=\varinjlim  M_{\lambda}, N=N_{0}\otimes_{R_{0}}R=\varinjlim  N_{\lambda}$. If $M_{0}$ is of finite type (finite presentation), then the canonical homomorphism~\eqref{eq:mor-fp}
is injective (respectively, bijective).
\end{lemma}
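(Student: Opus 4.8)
The plan is to prove the injectivity and the bijectivity statements in turn, in both cases isolating the only real input, namely the elementary description of a filtered colimit of modules: if $N=\varinjlim_{\beta}N_{\beta}$, then an element of some $N_{\lambda}$ maps to $0$ in $N$ if and only if it already maps to $0$ in $N_{\beta}$ for some $\beta\geq\lambda$, and any finite family of indices in a directed set has a common upper bound. Throughout I use that $N_{\lambda}=N_{0}\otimes_{R_{0}}R_{\lambda}$ gives $N=\varinjlim_{\beta\geq\lambda}N_{\beta}$ and $N_{\lambda}\otimes_{R_{\lambda}}R_{\beta}=N_{\beta}$, and similarly for $M$; the map $\theta$ sends the class of $u_{\lambda}\colon M_{\lambda}\to N_{\lambda}$ to $u_{\lambda}\otimes_{R_{\lambda}}R\colon M\to N$.

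\emph{Injectivity when $M_{0}$ is of finite type.} Suppose $u_{\lambda}\colon M_{\lambda}\to N_{\lambda}$ represents a class killed by $\theta$, i.e. $u_{\lambda}\otimes_{R_{\lambda}}R=0$. Choose generators $x_{1},\dots,x_{k}$ of $M_{0}$ over $R_{0}$ and let $m_{i}\in M_{\lambda}$ denote their images; the $m_{i}$ generate $M_{\lambda}$ over $R_{\lambda}$. For each $i$ the image of $u_{\lambda}(m_{i})$ in $N=\varinjlim_{\beta\geq\lambda}N_{\beta}$ vanishes, so it already vanishes in $N_{\beta_{i}}$ for some $\beta_{i}\geq\lambda$. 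Picking $\beta\geq\beta_{1},\dots,\beta_{k}$, the map $u_{\lambda}\otimes_{R_{\lambda}}R_{\beta}\colon M_{\beta}\to N_{\beta}$ annihilates all $m_{i}$, hence is zero, so the class of $u_{\lambda}$ in $\varinjlim\Hom_{\Mod(R_{\lambda})}(M_{\lambda},N_{\lambda})$ is zero. Thus $\theta$ is injective.

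\emph{Surjectivity when $M_{0}$ is of finite presentation.} Fix a presentation $R_{0}^{q}\xrightarrow{\psi_{0}}R_{0}^{p}\xrightarrow{\pi_{0}}M_{0}\to 0$. By right exactness of base change, for every $\lambda$ we get exact sequences $R_{\lambda}^{q}\xrightarrow{\psi_{\lambda}}R_{\lambda}^{p}\xrightarrow{\pi_{\lambda}}M_{\lambda}\to 0$ and, in the limit, $R^{q}\xrightarrow{\psi}R^{p}\xrightarrow{\pi}M\to 0$, with $\pi_{\lambda}\circ\psi_{\lambda}=0$ and $\pi\circ\psi=0$. Given $v\in\Hom_{\Mod(R)}(M,N)$, the composite $v\circ\pi\colon R^{p}\to N$ is a $p$-tuple of elements of $N=\varinjlim_{\beta}N_{\beta}$; taking $\beta_{0}$ dominating the indices where they are defined gives $w\colon R_{\beta_{0}}^{p}\to N_{\beta_{0}}$ with $w\otimes_{R_{\beta_{0}}}R=v\circ\pi$. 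Then $w\circ\psi_{\beta_{0}}\colon R_{\beta_{0}}^{q}\to N_{\beta_{0}}$ satisfies $(w\circ\psi_{\beta_{0}})\otimes R=v\circ\pi\circ\psi=0$, and being a $q$-tuple of elements of $N_{\beta_{0}}$ each dying in $N$, it vanishes in $N_{\beta_{1}}$ for some $\beta_{1}\geq\beta_{0}$. Hence $w':=w\otimes_{R_{\beta_{0}}}R_{\beta_{1}}$ satisfies $w'\circ\psi_{\beta_{1}}=0$, so factors uniquely as $w'=u_{\beta_{1}}\circ\pi_{\beta_{1}}$ for some $u_{\beta_{1}}\colon M_{\beta_{1}}\to N_{\beta_{1}}$. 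Since $\pi$ is surjective and $(u_{\beta_{1}}\otimes_{R_{\beta_{1}}}R)\circ\pi=w'\otimes R=v\circ\pi$, we conclude $u_{\beta_{1}}\otimes_{R_{\beta_{1}}}R=v$, so $\theta$ is surjective; combined with the injectivity already shown, $\theta$ is bijective.

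The only delicate aspect is bookkeeping: one must repeatedly invoke directedness of $I$ to collapse the finitely many auxiliary indices ($\beta_{i}$, $\beta_{0}$, $\beta_{1}$) into a single dominating one, and the hypotheses on $M_{0}$ are precisely what keep these index sets finite. No genuine compatibility obstruction arises: the representing $u_{\beta_{1}}$ is only well defined up to enlarging the index, but that ambiguity is exactly the kernel controlled by the injectivity statement.
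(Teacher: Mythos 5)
Your argument is correct and complete; the paper gives no proof of this lemma, citing \cite[Lemma 8.5.2.1]{ega4-3}, and your proof is exactly the standard argument of that reference (generators plus directedness of the index set for injectivity; a finite presentation, right-exactness of base change, and the factorization through the cokernel for surjectivity, with injectivity supplying the rest of bijectivity). The only discrepancy is notational: the displayed target of $\theta$ in~\eqref{eq:mor-fp}, written $\Hom_{\Mod(R)}(N,M)$, is a typo for $\Hom_{\Mod(R)}(M,N)$, which you have implicitly and correctly read as such.
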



\begin{lemma}\cite[Proposition 8.9.1 (ii)]{ega4-3}\label{lm:noeth-red-2}
Let $R$ be a commutative ring and $M$ an $R$-module of finite presentation. Then, there exits a noetherian subring $R_{0}\subset R$ and an $R_0$-module of finite presentation such that $M_{0}\otimes_{R_{0}}R\simeq M$.
\end{lemma}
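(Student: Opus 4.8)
The plan is to build $R_0$ by hand from the finitely many ring elements that encode a presentation of $M$. Since $M$ is of finite presentation, fix an exact sequence of $R$-modules
\[
R^{\oplus p}\ \xrightarrow{\ \phi\ }\ R^{\oplus q}\ \longrightarrow\ M\ \longrightarrow\ 0,
\]
and let $(a_{ij})\in \operatorname{Mat}_{q\times p}(R)$ be the matrix of $\phi$ in the standard bases. First I would let $R_{0}\subset R$ be the subring generated over the prime ring of $R$ by the finite set $\{a_{ij}\}$; being a finitely generated $\Z$-algebra, $R_{0}$ is noetherian by Hilbert's basis theorem. The same matrix $(a_{ij})$, now read with entries in $R_{0}$, defines a homomorphism $\phi_{0}:R_{0}^{\oplus p}\to R_{0}^{\oplus q}$, and I set $M_{0}:=\operatorname{coker}(\phi_{0})$, which is an $R_{0}$-module of finite presentation by construction.

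It then remains to identify $M_{0}\otimes_{R_{0}}R$ with $M$, and this is immediate from right-exactness of $-\otimes_{R_{0}}R$: applying that functor to the presentation $R_{0}^{\oplus p}\xrightarrow{\phi_{0}}R_{0}^{\oplus q}\to M_{0}\to 0$ gives an exact sequence $R^{\oplus p}\xrightarrow{\phi_{0}\otimes 1}R^{\oplus q}\to M_{0}\otimes_{R_{0}}R\to 0$, and $\phi_{0}\otimes_{R_{0}}1_{R}$ is again multiplication by the matrix $(a_{ij})$, hence equals $\phi$. Therefore $M_{0}\otimes_{R_{0}}R=\operatorname{coker}(\phi)=M$, the isomorphism being the canonical one.

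I do not expect a genuine obstacle here — this is a soft noetherian-approximation argument. The only point worth a moment's care is the one where finite presentation is genuinely used: adjoining finitely many elements to the prime subring keeps us inside the noetherian world, whereas for a merely finitely generated module one would have to record relations among the generators that need not be finitely describable, so no noetherian $R_{0}$ capturing $M$ as a base change need exist in that generality. An equivalent packaging, which sits naturally next to Lemma~\ref{lm:noeth-red-1}, is to write $R=\varinjlim R_{\lambda}$ over the direct system of its finitely generated (hence noetherian) $\Z$-subalgebras and to descend the matrix $(a_{ij})$ to some $R_{\lambda}$; this is the same proof phrased as a filtered colimit.
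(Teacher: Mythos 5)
Your argument is correct: the paper gives no proof of this lemma, citing \cite[Proposition 8.9.1 (ii)]{ega4-3}, and your matrix-descent construction (adjoin the finitely many entries of a presentation matrix to the prime subring, invoke Hilbert's basis theorem, and use right-exactness of $-\otimes_{R_0}R$ together with the identity $\phi_0\otimes 1=\phi$) is precisely the standard noetherian-approximation argument underlying that reference, equivalent to the filtered-colimit phrasing you mention alongside Lemma~\ref{lm:noeth-red-1}. The only adjacent point, not required by the statement itself, is that the paper's Remark~\ref{rmk:M0 projective} later assumes $M_0$ can be taken projective when $M$ is projective, which is a separate refinement your proof does not (and need not) address.
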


\begin{remark}\label{rmk:M0 projective}
If $M$ is projective, we may assume that $M_0$ is also projective.
\end{remark}

\begin{lemma}\label{lm:indexes}
Let $R$ be a commutative ring and $R_{0}\subset R$ a noetherian subring. Consider the set 
$J=\{R'\subset R \text{ s.t. } R' \text{ is finitely generated over } R_{0}\}$ together with the preorder defined by inclusion. 
Then, $\varinjlim_{J} R_{\lambda}=R$.
\end{lemma}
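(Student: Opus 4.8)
The plan is to verify three things: that $J$, preordered by inclusion, is a directed set; that the colimit of the corresponding system of inclusions is canonically the union of the $R'$ inside $R$; and that this union exhausts $R$.

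First I would observe that $J$ is nonempty, since $R_0$ itself is finitely generated over $R_0$, and that it is directed: given $R',R''\in J$, write $R'=R_0[a_1,\dots,a_p]$ and $R''=R_0[b_1,\dots,b_q]$ for finitely many $a_i,b_j\in R$; then the subring $R_0[a_1,\dots,a_p,b_1,\dots,b_q]\subseteq R$ is again finitely generated over $R_0$, hence lies in $J$, and it contains both $R'$ and $R''$. For $R'\subseteq R''$ in $J$ the transition map is the inclusion $R'\hookrightarrow R''$, and these are manifestly compatible, so $\{R_\lambda\}_{\lambda\in J}$ is a genuine directed system of subrings of $R$ with inclusions as transition maps.

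Next I would identify the colimit. The inclusions $R'\hookrightarrow R$ form a cocone over the system, hence induce a ring homomorphism $\varphi:\varinjlim_{J}R_\lambda\to R$. Its image is the union $S:=\bigcup_{\lambda\in J}R_\lambda\subseteq R$, and $\varphi$ is injective because a filtered colimit of injective maps is injective (an element of the colimit mapping to $0$ is represented in some $R_\lambda$, where it already vanishes). Thus $\varinjlim_{J}R_\lambda\simeq S$ as subrings of $R$.

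Finally I would show $S=R$: for any $x\in R$, the subring $R_0[x]\subseteq R$ is finitely generated over $R_0$, so $R_0[x]\in J$ and hence $x\in R_0[x]\subseteq S$. Therefore $S=R$ and $\varinjlim_{J}R_\lambda=R$, as claimed. There is no real obstacle here beyond the bookkeeping of directedness and the standard identification of a filtered colimit of inclusions with a union.
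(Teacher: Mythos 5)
Your proof is correct and follows essentially the same route as the paper: directedness via the subring generated by the generators of two members of $J$, identification of the colimit with the union of the subrings, and exhaustion by noting $R_0[x]\in J$ for each $x\in R$. The only (harmless) difference is that you spell out the injectivity of the canonical map to $R$, which the paper takes for granted.
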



\begin{proof}

Since $\varinjlim_{J}  R_{\lambda}=\bigcup_{J} R_{\lambda}$, to prove that $J$ is a direct system we just need to show that given $R_{\alpha}$ and $R_{\beta}$ algebras finitely generated over $R_0$, the subring $R_{\gamma}$ generated by $R_{\alpha}$ and $R_{\beta}$ is finitely generated over $R_0$. Let us fix presentations $u_{\alpha}:R_0[x_1,\hdots,x_m]\rightarrow R_{\alpha}\rightarrow 0$ and $u_{\beta}:R_0[y_1,\hdots,y_n]\rightarrow R_{\beta}\rightarrow 0$. Then, $R_{\gamma}$ is given by the image of the morphism of $k$-algebras
$$
u:R_0[x_1,\hdots,x_m,y_1,\hdots,y_n]\rightarrow R
$$
defined by $u(x_{i})=u_{\alpha}(x_{i})$ and $u(y_{j})=u_{\beta}(y_{j})$. Thus, $R_{\gamma}$ is finitely generated over $R_0$ and, clearly, $R_{\alpha}, R_{\beta}\subset R_{\gamma}$. 

Finally, note that for any $r\in R$, $R_0[r]$ is a finitely generated $R_0$-algebra and, thus, $\varinjlim_{J} R_{\lambda}=R$.
\end{proof}

\begin{lemma}\label{lm:noeth-red-3}
Let $R$ be a commutative ring and $M$ an $R$-module of finite presentation. 
 Let $R_{0}$ be a noetherian subring of $R$ and let $M_{0}$ be an $R_0$-module of finite presentation such that $M_{0}\otimes_{R_{0}}R\simeq M$.
Then, there exits
a direct system of noetherian $R_{0}$-subalgebras of $R$, $\{R_{\lambda}\}_{\lambda\in J}$, and a direct system of modules of finite presentation, $\{M_{\lambda}\}_{\lambda\in J}$ ($M_{\lambda}$ being an $R_{\lambda}$-module), such that $R=\varinjlim  R_{\lambda}$, $M=\varinjlim  M_{\lambda}$ and $\End_{\Mod(R)}(M)=\varinjlim  \End_{\Mod(R_{\lambda})}(M_{\lambda})$.
\end{lemma}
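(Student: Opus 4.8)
The plan is to combine the previous three lemmas by a standard direct-limit bootstrapping argument. By Lemma~\ref{lm:noeth-red-2} and Remark~\ref{rmk:M0 projective} we already have, by hypothesis, a noetherian subring $R_0\subset R$ and an $R_0$-module of finite presentation $M_0$ with $M_0\otimes_{R_0}R\simeq M$. Take $J=\{R'\subset R \text{ s.t. } R' \text{ is finitely generated over } R_0\}$ ordered by inclusion; by Lemma~\ref{lm:indexes} this is a filtered (direct) system with $\varinjlim_J R_\lambda=R$, and each $R_\lambda$ is noetherian, being a finitely generated algebra over the noetherian ring $R_0$. Set $M_\lambda:=M_0\otimes_{R_0}R_\lambda$; this is a finitely presented $R_\lambda$-module, the transition maps $M_\lambda\to M_\beta$ are the canonical ones $m\mapsto m\otimes 1$ as in the discussion preceding Lemma~\ref{lm:noeth-red-1}, and $M=\varinjlim_J M_\lambda=M_0\otimes_{R_0}R$.

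It then remains to identify $\End_{\Mod(R)}(M)$ with $\varinjlim_J \End_{\Mod(R_\lambda)}(M_\lambda)$. For this I would apply Lemma~\ref{lm:noeth-red-1} with $N_0=M_0$: since $M_0$ is of finite presentation, the canonical homomorphism
\[
\theta:\varinjlim_J \Hom_{\Mod(R_\lambda)}(M_\lambda,M_\lambda)\longrightarrow \Hom_{\Mod(R)}(M,M)
\]
of~\eqref{eq:mor-fp} is bijective. One has to check that $\theta$ carries the subset of automorphisms onto automorphisms, i.e.\ that it respects the ring (composition) structure and hence restricts to an isomorphism $\varinjlim_J\Aut(M_\lambda)\xrightarrow{\sim}\Aut(M)$; but $\theta$ is manifestly compatible with composition of endomorphisms because the transition maps $-\otimes_{R_\lambda}R_\beta$ and the base change $-\otimes_R$ are functorial, and an element of a filtered colimit of rings is a unit iff it becomes a unit at some finite stage. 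This gives the claimed equality $\End_{\Mod(R)}(M)=\varinjlim_J\End_{\Mod(R_\lambda)}(M_\lambda)$, which is really all the statement asks for; the accompanying statement for $\Aut$ comes for free since the determinant (or, more intrinsically, invertibility) is detected at a finite level.

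The only genuinely delicate point is ensuring that the index system is filtered and still cofinal enough that $\varinjlim_J R_\lambda=R$; but this is exactly the content of Lemma~\ref{lm:indexes}, so the main obstacle has already been dispatched. Everything else is bookkeeping: checking that $M_\lambda=M_0\otimes_{R_0}R_\lambda$ is finitely presented over the noetherian ring $R_\lambda$ (immediate, as $M_0$ is finitely presented over $R_0$ and base change preserves finite presentation), that the $M_\lambda$ form a direct system with colimit $M$ (the canonical isomorphisms $M_\beta\simeq M_\lambda\otimes_{R_\lambda}R_\beta$ make the transition maps coherent, as noted before Lemma~\ref{lm:noeth-red-1}), and that $\theta$ is an isomorphism of rings, not just of modules. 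I would therefore present the proof as: invoke Lemma~\ref{lm:indexes} to get the system $\{R_\lambda\}_{\lambda\in J}$ with colimit $R$; set $M_\lambda:=M_0\otimes_{R_0}R_\lambda$ and observe finite presentation and $\varinjlim M_\lambda=M$; finally apply the bijectivity half of Lemma~\ref{lm:noeth-red-1} to the pair $(M_0,M_0)$ and note compatibility with composition to conclude $\End_{\Mod(R)}(M)=\varinjlim_J\End_{\Mod(R_\lambda)}(M_\lambda)$.
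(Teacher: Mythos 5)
Your proposal is correct and follows essentially the same route as the paper: invoke Lemma~\ref{lm:indexes} for the filtered system of noetherian subalgebras $\{R_\lambda\}$, set $M_\lambda:=M_0\otimes_{R_0}R_\lambda$, and apply the bijectivity part of Lemma~\ref{lm:noeth-red-1} to identify $\End_{\Mod(R)}(M)$ with $\varinjlim\End_{\Mod(R_\lambda)}(M_\lambda)$. The extra remarks about compatibility with composition and automorphisms go beyond what the statement requires but are harmless.
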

\begin{proof}
Let $J$ and $\{R_{\lambda}\}_{\lambda\in J}$ be as in  Lemma~\ref{lm:indexes} and denote $M_{\lambda}=M_{0}\otimes_{R_{0}}R_{\lambda}$. Note that $M=\varinjlim M_{\lambda}$, since direct limits commutes with tensor product. Now, the result follows directly from Lemma~\ref{lm:noeth-red-1}. Note that, for each $\lambda$,  the $R_0$-algebra $R_{\lambda}$ is a noetherian subring of $R$ since it is finitely generated over $R_{0}$. 
\end{proof}

\begin{remark}
The above lemmas can be stated in terms of $\Bbbk$-algebras instead of just commutative rings.  We will use this fact without any further mention to this remark.
\end{remark}
\begin{proposition}\label{prop:noeth-red}
Let $R$ and $A$ be $\Bbbk$-algebras, and $M$ a finitely generated projective $R$-module. Let $f:\End_{\Mod(R)}(M)^{\vee}\rightarrow A\otimes_{\Bbbk}R$ be a morphism of $R$-modules. Then, there exits a noetherian $\Bbbk$-subalgebra $R_{0}\subset R$, a finitely generated projective $R_{0}$-module $M_0$, and a morphism of $R_{0}$-modules $f_{0}:\End_{\Mod(R_0)}(M_0)^{\vee}\rightarrow A\otimes_{\Bbbk}R_{0}$ such that $M_0 \otimes_{R_0} R=M$ and $f_{0}\otimes 1=f$.
\end{proposition}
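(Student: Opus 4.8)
The statement is a standard "Noetherian descent" of data: we have $R$, $A$, $M$ finitely generated projective over $R$, and a morphism $f \colon \End_{\Mod(R)}(M)^\vee \to A \otimes_\Bbbk R$, and we want to find all of this defined over a Noetherian $\Bbbk$-subalgebra $R_0 \subset R$. The plan is to assemble the three ingredients in order: first descend $M$, then descend $f$, then enlarge $R_0$ to absorb both.

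First I would descend the module $M$. Since $M$ is finitely generated projective over $R$, it is of finite presentation, so Lemma~\ref{lm:noeth-red-2} (together with Remark~\ref{rmk:M0 projective}) gives a Noetherian $\Bbbk$-subalgebra $R_0' \subset R$ and a finitely generated projective $R_0'$-module $M_0'$ with $M_0' \otimes_{R_0'} R \simeq M$. Fix such a choice. Then $\End_{\Mod(R_0')}(M_0')$ is again finitely generated projective over $R_0'$ (it is a direct summand of a finite free module), hence so is its dual $\End_{\Mod(R_0')}(M_0')^\vee$, and base change along $R_0' \to R$ identifies it with $\End_{\Mod(R)}(M)^\vee$ because forming $\End$ and $\vee$ of a finitely generated projective module commutes with base change.

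Next I would descend the morphism $f$. Apply Lemma~\ref{lm:indexes} with the Noetherian subring $R_0'$ fixed above: the set $J$ of $R_0'$-subalgebras of $R$ that are finitely generated over $R_0'$ is a direct system with $\varinjlim_{J} R_\lambda = R$, and each $R_\lambda$ is itself Noetherian. Set $P_0' := \End_{\Mod(R_0')}(M_0')^\vee$, which is a finitely presented $R_0'$-module, and set $P_\lambda := P_0' \otimes_{R_0'} R_\lambda$, $Q_\lambda := A \otimes_\Bbbk R_\lambda$ (so $Q_0' = A \otimes_\Bbbk R_0'$ and $\varinjlim Q_\lambda = A \otimes_\Bbbk R = Q$, since tensor product commutes with direct limits). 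By Lemma~\ref{lm:noeth-red-1}, since $P_0'$ is of finite presentation, the canonical map
\[
\varinjlim_{J} \Hom_{\Mod(R_\lambda)}(P_\lambda, Q_\lambda) \;\longrightarrow\; \Hom_{\Mod(R)}(P, Q)
\]
is bijective, where $P := P_0' \otimes_{R_0'} R \simeq \End_{\Mod(R)}(M)^\vee$ and $Q = A \otimes_\Bbbk R$. Hence the given $f \in \Hom_{\Mod(R)}(P,Q)$ comes from some $f_\lambda \in \Hom_{\Mod(R_\lambda)}(P_\lambda, Q_\lambda)$ for a suitable index $\lambda \in J$.

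Finally I would take $R_0 := R_\lambda$. It is a Noetherian $\Bbbk$-subalgebra of $R$ (finitely generated over the Noetherian ring $R_0'$, hence Noetherian), the module $M_0 := M_0' \otimes_{R_0'} R_\lambda$ is finitely generated projective over $R_0$ with $M_0 \otimes_{R_0} R = M_0' \otimes_{R_0'} R \simeq M$, and $\End_{\Mod(R_0)}(M_0)^\vee \simeq P_0' \otimes_{R_0'} R_\lambda = P_\lambda$, so $f_0 := f_\lambda \colon \End_{\Mod(R_0)}(M_0)^\vee \to A \otimes_\Bbbk R_0$ is the sought morphism, and by construction $f_0 \otimes_{R_0} R = f$ under the canonical identifications. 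The only genuinely delicate point is the bookkeeping of identifications — checking that "$\End$ of a finitely generated projective module commutes with base change" and that the isomorphism $M_0' \otimes R \simeq M$ is compatible with the identification of duals of $\End$ used to state the conclusion $f_0 \otimes 1 = f$; everything else is a direct appeal to Lemmas~\ref{lm:noeth-red-1}, \ref{lm:noeth-red-2}, and~\ref{lm:indexes}. I do not expect any real obstacle beyond this compatibility check.
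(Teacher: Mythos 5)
Your proposal is correct and follows essentially the same route as the paper: descend $M$ via Lemma~\ref{lm:noeth-red-2} and Remark~\ref{rmk:M0 projective}, use the direct system of finitely generated subalgebras from Lemma~\ref{lm:indexes} (which the paper packages through Lemma~\ref{lm:noeth-red-3}), note that $\End$ and duals of finitely generated projective modules commute with base change, and then descend $f$ by the bijectivity statement of Lemma~\ref{lm:noeth-red-1} applied to the finitely presented module $\End_{\Mod(R_0')}(M_0')^{\vee}$. The compatibility bookkeeping you flag at the end is exactly the point the paper also records, so there is no gap.
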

\begin{proof}
 Let $R_0$ and $M_0$ be as in Lemma~\ref{lm:noeth-red-2}. Let $\{R_{\lambda}\}_{\lambda\in J}$ and $\{M_{\lambda}\}_{\lambda\in J}$ be as in Lemma~\ref{lm:noeth-red-3}. By Remark~\ref{rmk:M0 projective}, we may assume that $M_\lambda$ is projective for every $\lambda$. Note that, under these conditions there is a canonical isomorphism $\End_{\Mod(R_{\alpha})}(M_{\alpha})\otimes_{R_{\alpha}}R_{\beta}=\End_{\Mod(R_{\beta})}(M_{\beta})$ for every $\alpha\leq \beta\in J$. So, $\varinjlim \End_{\Mod(R_{\lambda})}(M_{\lambda})^{\vee}=\End_{\Mod(R)}(M)^{\vee}$.
On the other hand, since direct limits commute with tensor product, we may write $A\otimes_{\Bbbk}R=\varinjlim (A\otimes_{\Bbbk}R_{\lambda})$. Then, $f$ belongs to
$$
\Hom_{\Mod(R)}(\varinjlim  \End_{\Mod(R_{\lambda})}(M_{\lambda})^{\vee},\varinjlim  (A \otimes_{\Bbbk}R_{\lambda})).
$$
By Lemma~\ref{lm:noeth-red-1}, the above is isomorphic to
$$
\varinjlim  \Hom_{\Mod(R_{\lambda})}(\End_{\Mod(R_{\lambda})}(M_{\lambda})^{\vee},A \otimes_{\Bbbk}R_{\lambda}),
$$
and, thus, there exists $\lambda\in J$ such that $f$ coincides with the class of a map $f_{\lambda} :\End_{\Mod(R_{\lambda})}(M_{\lambda})^{\vee}\rightarrow A\otimes_{\Bbbk}R_{\lambda}$. Then, the desired data is given by the triple $(R_{\lambda}, M_{\lambda}, f_{\lambda})$.
\end{proof}

\begin{theorem}\label{t:Invariant de Sim de Mm es fin pres}
Let $(\Bbbk, R, M, T, N, G, A)$ be as in Assumptions~\ref{assumptions} and assume $T=R$, $N=T^{\oplus n}$ for a positive integer number $n$. 

Then $A:=  \big( \Sim_{R}^\bullet ((M^{\vee})^{\oplus n})\big)^{G}$ is an $R$-algebra of finite presentation for all $n>0$. 
\end{theorem}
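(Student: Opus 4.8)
The plan is to reduce the statement to the classical Seshadri-type finiteness theorem (Proposition~\ref{prop:linearly}~(4)) by a Noetherian approximation argument, using the machinery built up in Lemmas~\ref{lm:noeth-red-1}--\ref{lm:noeth-red-3} and Proposition~\ref{prop:noeth-red}. First I would observe that the representation $\rho:\underline{G}_R\to\underline{\Aut}_{\Mod(R)}(M)$ is equivalent to a morphism of $R$-modules $\End_{\Mod(R)}(M)^{\vee}\to R[G]=\Bbbk[G]\otimes_{\Bbbk}R$, as explained in~\S\ref{subsec:representations}. Since $\Bbbk[G]$ is a fixed $\Bbbk$-algebra, this is exactly the kind of datum to which Proposition~\ref{prop:noeth-red} applies (with $A=\Bbbk[G]$ there). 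So there is a Noetherian $\Bbbk$-subalgebra $R_0\subset R$, a finitely generated projective $R_0$-module $M_0$ with $M_0\otimes_{R_0}R\simeq M$, and a morphism $f_0:\End_{\Mod(R_0)}(M_0)^{\vee}\to\Bbbk[G]\otimes_{\Bbbk}R_0$ with $f_0\otimes_{R_0}R=f$. One must check that $f_0$ actually defines a representation of $G_{R_0}$ on $M_0$ (i.e.\ that the co-associativity and co-unit diagrams commute); these are equalities of morphisms of finitely presented $R_0$-modules whose base change to $R$ holds, so by the injectivity half of Lemma~\ref{lm:noeth-red-1} — after possibly enlarging $R_0$ within the direct system — they already hold over $R_0$. (Here one uses that $G$ is faithfully flat of finite presentation over $\Bbbk$, so $\Bbbk[G]$ is a reasonable module.)

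Next I would handle the group-theoretic hypothesis: $G$ is linearly reductive over $\Bbbk$, hence by Proposition~\ref{prop:linearly}~(2) the base change $G_{R_0}$ is linearly reductive over $R_0$, and $R_0$ is Noetherian. Therefore Proposition~\ref{prop:linearly}~(4) applies to the finitely generated $R_0$-algebra $\Sim_{R_0}^{\bullet}((M_0^{\vee})^{\oplus n})$ (which is finitely generated because $M_0$ is a finitely generated $R_0$-module), acted on by $G_{R_0}$ via the contragradient representation~\eqref{E:contragradient} on each summand. This yields that
\[
A_0 \,:=\, \bigl(\Sim_{R_0}^{\bullet}((M_0^{\vee})^{\oplus n})\bigr)^{G_{R_0}}
\]
is a finitely generated $R_0$-algebra. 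Since $R_0$ is Noetherian, $A_0$ is in fact a finitely presented $R_0$-algebra.

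It then remains to descend back up: I claim $A_0\otimes_{R_0}R\simeq A$ as $R$-algebras. Indeed $\Sim_{R_0}^{\bullet}((M_0^{\vee})^{\oplus n})\otimes_{R_0}R\simeq\Sim_R^{\bullet}((M^{\vee})^{\oplus n})$ because symmetric algebras, duals of finitely generated projectives, and finite direct sums all commute with the base change $-\otimes_{R_0}R$; and taking $G$-invariants commutes with this base change by Proposition~\ref{prop:linearly}~(3), applied to the linearly reductive group $G_{R_0}$ over $R_0$ and the $R_0$-algebra $R$. Finally, finite presentation of an algebra is preserved under base change: if $A_0\simeq R_0[x_1,\dots,x_r]/(g_1,\dots,g_s)$ then $A\simeq A_0\otimes_{R_0}R\simeq R[x_1,\dots,x_r]/(g_1,\dots,g_s)$, which is finitely presented over $R$. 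This completes the proof.

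The main obstacle I anticipate is the bookkeeping in the first paragraph: one must be careful that the approximating datum $f_0$ is not merely a morphism of modules but genuinely a $G_{R_0}$-representation, and that the induced $G$-action on the symmetric algebra over $R_0$ base-changes to the given one over $R$. Both reduce to equalities of maps between finitely presented $R_0$-modules, so the injectivity statement in Lemma~\ref{lm:noeth-red-1} (combined with the cofiltered nature of the index set $J$ from Lemma~\ref{lm:indexes}, allowing one to enlarge $R_0$ finitely many times) settles them — but making this rigorous is where the real work lies. Everything downstream is a formal consequence of the exactness and base-change properties of invariants under linearly reductive groups.
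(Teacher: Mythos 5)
Your proof is correct and follows essentially the same route as the paper's: Noetherian reduction of the representation via Proposition~\ref{prop:noeth-red} and the injectivity part of Lemma~\ref{lm:noeth-red-1} (to see the co-module axioms already hold over a suitable finitely generated subalgebra), then finite generation over the Noetherian approximation from Proposition~\ref{prop:linearly}(4), upgraded to finite presentation by noetherianity, and base change back to $R$. The only cosmetic difference is that you apply Proposition~\ref{prop:linearly}(3) directly to the base change $R_0\to R$, where the paper passes through the two direct systems $A_\lambda$, $B_\lambda$ and the commutation of invariants with direct limits --- the same argument in substance.
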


\begin{proof}
The proof consists of a combination of noetherian reduction arguments (\cite{ega4-3}) and~\cite[Theorem 8]{Franjou}.

We claim that  $R$ can be assumed to be noetherian. 
Let $R[G]$ denote the ring of functions of $G_R$. The action of $G_R$ on $M$ can be understood as a morphism of $R$-modules (see Eq.~\ref{E:co-module})
	\[
	\tilde \rho: M \longrightarrow R[G]\otimes_R M.
	\]
Let $\rho:\End_R (M)^{\vee}\to R[G]=\Bbbk[G]\otimes_{\Bbbk}R$ be the corresponding morphism of $R$-modules.
Let $R_0$, $M_0$ and $\rho_0$ be as in Proposition~\ref{prop:noeth-red} and $\{R_{\lambda}\}_{\lambda\in J}$, $\{M_{\lambda}\}_{\lambda \in J}$ as in Lemma~\ref{lm:noeth-red-3}. 
Note that $M_\lambda:=M_0\otimes_{R_0} R_\lambda$ and set $\rho_\lambda:=\rho_0 \otimes 1$ (base change to $R_\lambda$). Note that $R_\lambda$ is noetherian and that $M_\lambda$ is a finitely generated projective $R_\lambda$-module (Remark~\ref{rmk:M0 projective}).

We claim that there exists $\beta$ such that $\tilde \rho_\lambda: M_\lambda \longrightarrow R_\lambda[G]\otimes_{R_\lambda} M_\lambda$ defines an action of $G_{R_\lambda}$ on $M_\lambda$ for each $\lambda\geq \beta$. Namely, we have to check that the following two diagrams,
	\begin{equation}\label{e:action1}
	\xymatrix{
	M_\lambda \ar[r]^-{\tilde\rho_\lambda}  \ar[d]_{\tilde\rho_\lambda} & 
	R_\lambda[G]\otimes_{R_\lambda} M_\lambda  \ar[d]^{1\otimes \tilde\rho_\lambda}
	\\
	R_\lambda[G]\otimes_{R_\lambda} M_\lambda  \ar[r]^-{m \otimes 1} & 
	R_\lambda[G]\otimes_{R_\lambda} R_\lambda[G]\otimes_{R_\lambda}  M_\lambda}
	\end{equation}
and
	\begin{equation}\label{e:action2}
	\xymatrix@C=22pt{
	M_\lambda \ar[r]^-{\tilde\rho_\lambda}  \ar[dr]_{\Id} & 
	R_\lambda[G]\otimes_{R_\lambda} M_\lambda  \ar[d]^{e\otimes \tilde\rho_\lambda}
	\\
	& 
	M_\lambda}	
	\end{equation}
are commutative, where $m$ (resp. $e$) is the multiplication map (resp. identity element) of $G$ for every $\lambda \in J$ large enough. Let us define $N_{\lambda}:=R_\lambda[G]\otimes_{R_\lambda} R_\lambda[G]\otimes_{R_\lambda}  M_\lambda$ and consider the morphisms 
\begin{equation*}
\begin{split}
& \Psi_{\lambda}:=1\otimes \tilde\rho_\lambda\circ \tilde\rho_\lambda - m\otimes 1\circ \tilde\rho_\lambda \in \Hom_{\Mod\{R_{\lambda}\}}(M_{\lambda},N_{\lambda}) , \\
&\Phi_{\lambda}:=e\otimes \tilde\rho_\lambda\circ \tilde\rho_\lambda- \Id  \in \Hom_{\Mod\{R_{\lambda}\}}(M_{\lambda},M_{\lambda}) .
\end{split}
\end{equation*}
Varying $\lambda$, we get direct systems of morphisms $(\Psi_{\lambda})_{\lambda}$ and $(\Phi_{\lambda})_{\lambda}$. By construction (see also~\eqref{eq:mor-fp}), we have
\begin{equation*}
\begin{split}
\theta(\varinjlim(\Psi_{\lambda}))=&1\otimes \tilde\rho\circ \tilde\rho - m\otimes 1\circ \tilde\rho=0 ,
\\
\theta(\varinjlim(\Phi_{\lambda}))=& e\otimes \tilde\rho\circ \tilde\rho- \Id =0 .
\end{split}
\end{equation*}
Thus, by Lemma~\ref{lm:noeth-red-1}, we deduce that $\varinjlim(\Psi_{\lambda})=0$ (respectively,  $\varinjlim(\Phi_{\lambda})=0$), so there exists $\beta$ such that $\Psi_{\lambda}=0$ (respectively, $\Phi_{\lambda}=0$) for each $\lambda\geq \beta$. This proves the claim.

By~\ref{co-grad}, the base change of the co-module structure induced on $M_{\beta}^{\vee}$ by the representation $\rho_
\beta$ is equal to the co-module structure induced on $M^{\vee}_{\lambda}$ by $\rho_{\beta}\otimes 1=\rho_{\lambda}$.
We may define now two (a priori) different direct systems of algebras, 
$\{A_{\lambda}\}_{\lambda\geq \beta}$ and $\{B_{\lambda}\}_{\lambda\geq \beta}$, with
\begin{equation*}
\begin{split}
A_{\lambda}:=&\big(\Sim^\bullet_{R_\lambda} (M_{\beta}^{\vee}\otimes_{R_{\beta}}R_{\lambda})^{\oplus n}\big)^{G_{R_\lambda}}, \textrm{ the transition maps }A_{\lambda}\rightarrow A_{\mu}\textrm{ being} \\ &\textrm{ those induced from the transition maps }R_{\lambda}\rightarrow R_{\mu},
\\
B_{\lambda}:=&\big(\Sim^\bullet_{R_\beta} (M_{\beta}^{\vee})^{\oplus n}\big)^{G_{R_\beta}}\otimes_{R_{\beta}}R_{\lambda}, \textrm{ the transition maps }B_{\lambda}\rightarrow B_{\mu} \\ &\textrm{ being those induced from the transition maps }R_{\lambda}\rightarrow R_{\mu}.
\end{split}
\end{equation*}
Since $M_{\beta}$ is finitely generated and projective, and direct limits commute with tensor products and the functor $(-)^{G}$ (\cite[Lemma 4.17]{jantzen}), it follows that the direct limits of the above directed systems are $\big(\Sim^\bullet_R (M_{\beta}^{\vee}\otimes_{R_{\beta}}R)^{\oplus n}\big)^{G}$ and $\big(\Sim^\bullet_{R_\beta} (M_{\beta}^{\vee})^{\oplus n}\big)^{G_{R_\beta}} \otimes_{R_\beta} R$, respectively. 
Note, however, that $A_{\lambda}=B_{\lambda}$ as subalgebras of $\big(\Sim^\bullet_R (M_{\lambda}^{\vee})^{\oplus n}\big)$ by Proposition~\ref{prop:linearly} (3). It follows that 
\begin{equation}\label{e:R_2R}
\big(\Sim^\bullet_R (M^{\vee})^{\oplus n}\big)^{G}\,\simeq\,
\big(\Sim^\bullet_R (M_{\beta}^{\vee}\otimes_{R_{\beta}}R)^{\oplus n}\big)^{G}
\,=\, \big(\Sim^\bullet_{R_\beta} (M_{\beta}^{\vee})^{\oplus n}\big)^{G_{R_\beta}} \otimes_{R_\beta} R.
\end{equation}
by uniqueness of direct limits.

Assume that $\big(\Sim^\bullet_{R_\beta} (M_{\beta}^{\vee})^{\oplus n}\big)^{G_{R_\beta}}$ were a finitely generated
 $R_\beta$-algebra. Then it would be of finite presentation since $R_\beta$ is noetherian. 
Hence, by equation~\eqref{e:R_2R}, $A$ would be a finitely presented $R$-algebra. 
Thus, to conclude it suffices to show that, if $R$ is a noetherian ring, then  $A$ is a finite type $R$-algebra. This is precisely Proposition~\ref{prop:linearly} (4).
\end{proof}

As the following results shows, Theorem~\ref{t:Invariant de Sim de Mm es fin pres} is a generalization of \cite[Theorem 2]{seshadri-relative}.

\begin{corollary}
Let $G$ be a linearly reductive group scheme over a ring $\Bbbk$ acting linearly on the $m$-dimensional affine space ${\mathbb{A}}_{\Bbbk}^n$. Let $X=\Spec B$ a closed subscheme of ${\mathbb{A}}_{\Bbbk}^n$ fixed by the action of $G$ and $P$ be a $G-B$-module of finite type over $B$. Then we have
\begin{enumerate}
    \item $B^G$ is an $\Bbbk$-algebra of finite type,
    \item $P^G$ is a $B^G$-module of finite type.
\end{enumerate}
\end{corollary}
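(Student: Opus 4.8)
The strategy is to realize the pair $(X,P)$ as arising from the construction in Assumptions~\ref{assumptions} with the module in question being free, so that Theorem~\ref{t:Invariant de Sim de Mm es fin pres} applies directly, and then to extract the finite-type conclusions for both $B^G$ and $P^G$ from the single statement that $\left(\Sim^\bullet_\Bbbk\big((M^\vee)^{\oplus n}\big)\right)^G$ is a finitely presented $\Bbbk$-algebra.

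First I would set up the geometric picture. The linear action of $G$ on ${\mathbb A}^n_\Bbbk$ corresponds to a representation on $M:=\Bbbk^{\oplus n}$ (a finitely generated projective, indeed free, $\Bbbk$-module), and the coordinate ring of ${\mathbb A}^n_\Bbbk$ is precisely $\Sim^\bullet_\Bbbk(M^\vee)$ with its induced $G$-action. Taking $R=T=\Bbbk$ and $N=T^{\oplus 1}=\Bbbk$ in Assumptions~\ref{assumptions}, Theorem~\ref{t:Invariant de Sim de Mm es fin pres} gives that $\left(\Sim^\bullet_\Bbbk(M^\vee)\right)^G$ is a finitely presented, hence finite-type, $\Bbbk$-algebra. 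Since $X=\Spec B$ is a $G$-fixed closed subscheme, $B$ is a $G$-equivariant quotient $\Sim^\bullet_\Bbbk(M^\vee)\twoheadrightarrow B$, and by exactness of $(-)^G$ for linearly reductive $G$ (Proposition~\ref{prop:linearly}(1)) the induced map on invariants $\left(\Sim^\bullet_\Bbbk(M^\vee)\right)^G\twoheadrightarrow B^G$ is still surjective. A surjective image of a finite-type $\Bbbk$-algebra is finite type, which gives (1).

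For (2), the point is that a finite-type $B$-module $P$ with a compatible $G$-action can be absorbed into a larger symmetric algebra: choosing a $G$-equivariant surjection $M'\otimes_\Bbbk B\twoheadrightarrow P$ for a suitable finite-dimensional representation $M'$ (or, more economically, enlarging the ambient affine space — replace $M$ by $M\oplus M'$ and view $P$ as a graded piece of a symmetric algebra over the bigger space), one reduces to showing that $\left(\Sim^\bullet_B(P)\right)^G$, or rather a finitely generated piece of it over $B^G$, is finite type. Concretely, form the $G$-algebra $\Sim^\bullet_\Bbbk(M^\vee)\otimes_\Bbbk \Sim^\bullet_\Bbbk((M'')^\vee)$ corresponding to ${\mathbb A}^n\times {\mathbb A}^{n'}$, with $B\otimes_B \Sim^\bullet_B(P)$ appearing as an equivariant quotient; Theorem~\ref{t:Invariant de Sim de Mm es fin pres} (applied with the larger free module and again $n=1$, or with the $\oplus n$ version if several copies are needed) shows the full ring of invariants is finitely generated over $\Bbbk$, and hence $\left(\Sim^\bullet_B(P)\right)^G$ is finitely generated over $B^G$. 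Reading off the degree-one part and using that $(-)^G$ commutes with the grading, $P^G=\left(\Sim^1_B(P)\right)^G$ is then a finite-type $B^G$-module: it is a graded piece of a finitely generated graded $B^G$-algebra, and such a piece is a finite $B^G$-module because the generators can be taken homogeneous, so degree-one elements are $B^G$-linear combinations of finitely many of them.

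\textbf{Main obstacle.} The genuinely delicate step is (2): one must be careful to produce the auxiliary representation $M'$ (equivalently, the enlarged affine space) so that $P$ embeds $G$-equivariantly as a graded summand, and then argue that "a graded piece of a finitely generated graded algebra over $B^G$ is a finitely generated $B^G$-module." This last assertion is elementary when the algebra is finitely generated \emph{by elements of bounded degree} — which is guaranteed here since we start from $\Sim^\bullet_B(P)$, generated in degree one — but it does require noting that the invariants functor respects the grading and that $B^G$-module generators of each graded piece can be chosen from a finite generating set of the algebra. Everything else is a routine application of linear reductivity (exactness and the Reynolds operator) together with the already-established Theorem~\ref{t:Invariant de Sim de Mm es fin pres}.
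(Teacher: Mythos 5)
Your proof of part (1) is correct and is essentially the paper's own argument: present $B$ as a $G$-equivariant quotient of the coordinate ring of the affine space, apply Theorem~\ref{t:Invariant de Sim de Mm es fin pres} (with the representation carried by a free rank-$n$ module and the other module trivial), and use exactness of $(-)^G$ from Proposition~\ref{prop:linearly}(1) to transfer finite generation to $B^G$.

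Part (2) is where there is a genuine gap. Your whole argument hinges on ``choosing a $G$-equivariant surjection $M'\otimes_\Bbbk B\twoheadrightarrow P$ for a suitable finite-dimensional representation $M'$'' and then applying Theorem~\ref{t:Invariant de Sim de Mm es fin pres} to the enlarged module $M\oplus M'$ (you even call $M'$ a ``larger free module''). You never produce such an $M'$, and over an arbitrary base ring $\Bbbk$ it need not exist in the form you require. What is available — using that $\Bbbk[G]$ is flat over $\Bbbk$, so comodules are locally finite — is a $G$-stable $\Bbbk$-submodule $W\subseteq P$, finitely generated over $\Bbbk$, containing a finite set of $B$-module generators of $P$; this yields a $G$-equivariant surjection $W\otimes_\Bbbk B\to P$. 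But $W$ need not be projective over $\Bbbk$, whereas Assumptions~\ref{assumptions} and Theorem~\ref{t:Invariant de Sim de Mm es fin pres} require the module carrying the representation to be finitely generated \emph{projective} (the proof of that theorem uses $\End_{\Mod(R)}(M)^{\vee}$, Remark~\ref{rmk:M0 projective}, and base change of invariants for projective modules), so you cannot invoke it for $\Sim^{\bullet}_\Bbbk$ of the enlarged representation. The step works when $\Bbbk$ is a field, but the corollary is stated over an arbitrary ring. The remaining steps of your argument (equivariant quotient, exactness of $(-)^G$, and ``a graded piece of a graded $B^G$-algebra generated by finitely many homogeneous elements is a finite $B^G$-module'') are fine, but they all rest on this unproved input — which is exactly the obstacle you yourself single out without closing it.

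For comparison, the paper handles (2) much more directly, by a surjection onto $P$ together with Proposition~\ref{prop:linearly}(1), with no detour through $\Sim^{\bullet}_B(P)$; note that there, too, the surjection must be taken $G$-equivariant (e.g.\ of the form $W\otimes_\Bbbk B\to P$ as above), so in either route the real content is the finiteness over $B^G$ of the invariants of an equivariant cover of $P$. Your graded-piece reduction would be a legitimate alternative once an equivariant cover by a finitely generated \emph{projective} representation is secured, but as written the proposal does not establish that, and this is precisely where it is incomplete.
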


\begin{proof}
The hypothesis imply that there is an $G$-equivariant surjection of $\Bbbk$-algebras $\Sim^{\bullet}_{\Bbbk} (\Bbbk^{n}) \to B$. Linearly reductivity (Proposition~\ref{prop:linearly}(1)) imply that $(\Sim^{\bullet}_{\Bbbk} (\Bbbk^{n}))^{G} \to B^{G}$ is also surjective. Letting $T=R=\Bbbk$, $M=\Bbbk$, $N=\Bbbk^n$ and applying Theorem~\ref{t:Invariant de Sim de Mm es fin pres}, one has that $(\Sim^{\bullet}_{\Bbbk} (\Bbbk^{n}))^{G}$ is a finitely presented $\Bbbk$-algebra and the first claim follows. 

The second item is a straightforward consequence of $G$ being linearly reductive. Indeed, noting that there exists a surjection $B^{\oplus p}\to P$ of $B$-modules for certain integer number $p$, the statement follows from Proposition~\ref{prop:linearly}(1).
\end{proof}



\subsection{Proof of Theorem~\ref{t:main-fin-pres}}\label{subsec:first main theorem}

The proof of Theorem~\ref{t:main-fin-pres} relies on  Theorem~\ref{t:Invariant de Sim de Mm es fin pres}.
Each claim of the Theorem~\ref{t:main-fin-pres} will be proved in a separate subsection.

\subsubsection{Case $N$ is finitely generated}\label{sss:fin-gen}
Consider a surjective morphism 
    \[
    T^{\oplus n} \to  N \to 0
    \]
for an integer $n>0$. Note that, being $G$ linearly reductive,  the exactness of the sequence of graded $T$-modules
	\[
	\Sim_{T}^\bullet (M^{\vee} \otimes_R T^{\oplus n} )
	\to 
	 \Sim_{T}^\bullet (M^{\vee} \otimes_R N)
	 \to 0
	\] 	
yields the following exact sequence
	\[
	B :=  \big( \Sim_{T}^\bullet (M^{\vee} \otimes_R T^{\oplus n})\big)^{G}
 \to A:=  \big( \Sim_{T}^\bullet (M^{\vee} \otimes_R N)\big)^{G} \to 0.
	\]
Thus, it suffices to show that $B$ is finitely generated. One has by Proposition~\ref{prop:linearly}
	\begin{equation}
    \label{E:A^G para N libre}
	B \,=\,   \big( \Sim_{T}^\bullet (M^{\vee} \otimes_R T^{\oplus n})\big)^{G}
	\,\simeq \,  \big( \Sim_{R}^\bullet ((M^{\vee})^{\oplus n})\big)^{G}  \otimes_R T
	\end{equation}
and the conclusion follows  from Theorem~\ref{t:Invariant de Sim de Mm es fin pres}.

\subsubsection{Case $N$ is finitely presented}\label{sss:finitepresented}
Consider a finite presentation of $N$ as a $T$-module
    \[
    N''\to N'\to N\to 0
    \] 
where $N''$ and $N'$ are free finite $T$-modules. Note that, being $G$ linearly reductive,  the exactness of the sequence of graded $T$-modules
	\[
	\widetilde N := (M^{\vee}\otimes_R N'' ) \otimes_{T}  \Sim_{T}^\bullet (M^{\vee} \otimes_R N')
	\to 
	 \Sim_{T}^\bullet (M^{\vee} \otimes_R N')
	\to 
	 \Sim_{T}^\bullet (M^{\vee} \otimes_R N)
	 \to 0
	\] 	
yields the following exact sequence
	\begin{equation}\label{E:casefinpresented1}
	\widetilde N^G
	\to 
	B 	\to A \to 0
	\end{equation}
where $B:=  \big( \Sim_{T}^\bullet (M^{\vee} \otimes_R N')\big)^{G}$ and $A:=  \big( \Sim_{T}^\bullet (M^{\vee} \otimes_R N)\big)^{G}$.

By Theorem~\ref{t:Invariant de Sim de Mm es fin pres} $B$ is a finitely presented $T$-algebra so, to conclude that $A$ is a finitely presented $B$-algebra, it suffices to show that $\widetilde N^G$ is a finitely generated $B$-module.
Indeed, since $\widetilde N$ is a finitely generated module over $\Sim_{T}^\bullet (M^{\vee} \otimes_R N')$, there exist $k\gg0$ and a surjective morphism of $T$-modules
	\[
	\big( \Sim_{T}^\bullet (M^{\vee} \otimes_R N') \big)^{\oplus k} \,\to\, \widetilde N \to 0 .
	\]
which, again by linear reductivity of $G$ and the functoriality of the Reynolds operator, gives rise to a surjective morphism 
\begin{equation}
\label{E:casefinpresented2}
B^{\oplus k} \to \widetilde N^G    
\end{equation}
of $B$-modules and, thus, $\widetilde N^G$ is a finitely generated $B$-module.

\subsubsection{Case $N$ is flat}\label{subsec:flat}
Recall that a module is flat if and only if it is the direct limit of a direct system of free finite modules (Lazard's Theorem \cite{Lazard}) and that direct limits commute with tensor products. Since $G_T$ acts trivially on $N$, it suffices to show the case where $N$ is a free finite $T$-module. 

 Indeed, for  $N$ a free finite  $T$-module, $T$-flatness of $A$ is equivalent to $R$-flatness of $\big( \Sim_{R}^\bullet ((M^{\vee})^{\oplus n})\big)^{G}$ by \eqref{E:A^G para N libre}. Replacing $M$ if necessary, it suffices to show that $\big( \Sim_{R}^{k} (M^{\vee} )\big)^{G}$ is flat for any finitely generated projective $R$-module $M$. 

By the characterizations of finite flat modules (\cite[\href{https://stacks.math.columbia.edu/tag/00NX}{Lemma 00NX}]{stacks-project}), it will suffice to express $\big( \Sim_{R}^{k} (M^{\vee} )\big)^{G}$ as a direct summand of a free finite $R$-module. Since the Reynolds operator allows us to write $\big( \Sim_{R}^{k} (M^{\vee} )\big)^{G}$ as a direct summand of $ \Sim_{R}^{k} (M^{\vee})$, we are done if we show that $ \Sim_{R}^{k} (M^{\vee})$ is a free finite $R$-module. But this is trivial since $M$ is a free finite $R$-module. The claim is proved. 

\begin{remark}
Note that \S\ref{subsec:flat} can be deduced from \cite{alper}. However, we have included an alternative proof for the sake of completeness.
\end{remark}

\begin{remark}\label{r:semistable points}
Under the hypothesis of Theorem~\ref{t:Invariant de Sim de Mm es fin pres}, the scheme $\P(M^{\vee})\sslash G:=\Proj \big( \Sim_{R}^\bullet (M^{\vee})\big)^{G}$ is a $R$-scheme of finite presentation and locally projective \cite[Lemma 31.30.4]{stacks-project}.
Consider the tautological line bundle $\mathcal{O}(1)$ on $\P(M^{\vee})$.
The natural inclusion $\big( \Sim_{R}^\bullet (M^{\vee})\big)^{G}\subset \Sim_{R}^\bullet (M^{\vee})$, induces a rational map $\P(M^{\vee})\dashrightarrow \P(M^{\vee})\sslash G$ defined on an open subscheme $U$ of $\P(M^{\vee})$ consisting of those points for which there are sections $s\in\Gamma(\P(M^{\vee}),\mathcal{O}(n))$ (for some $n>0$) not vanishing on them \cite[Corollary 3.7.4]{ega2}. This is the classical definition of semistability, so we will maintain the notation $\P(M^{\vee})^{ss}$ to denote the open subscheme $U$.
\end{remark}


\section{Partially generated graded algebras}\label{sec:pgg}

This section is devoted to study a new class of algebras, which we call \emph{partially generated graded algebras}. These algebras are a generalization of finitely presented graded algebras to the non-noetherian context (for instance, see Proposition~\ref{p:finpres-pgg} and Corollary~\ref{C:noether+pgg imply finite pres}). Among their properties (\S\ref{subsec:projective embd pgg}), we point out  Theorem~\ref{t:projectiveembedding finite degree generated} which will be essential when proving Theorem~\ref{T:closed immersion Proj affine}.

We assume that a base commutative ring $R$ is fixed for this whole section.



\subsection{Definition and characterization}\label{subsec:definition pgg}

The notion of $t$-partially generated graded $R$-algebra ($t$-pgg-algebra, for short) is introduced in Definition~\ref{d:pgg-alg}. Then, we show that a graded $R$-algebra is  a $t$-pgg-algebra $A$ if it is generated by elements of degree less than or equal to a given natural number $t$ and  the ideal of relations between the generators satisfies the same condition (Theorem~\ref{t:pgg-exact} and Remark~\ref{R:pgg degree is bounded}). Observe that we do not impose a finiteness condition on the number of generators, but only on the number of different degrees of a generator system of the algebra, as well as on the degrees of a generator system of the ideal of relations. 

\begin{definition}[partial algebra]
Given a subset $I\subseteq \Z$, a partial algebra over $R$ with support $I$ consists of the data $\{A_i, m_{ij}\}$, where each $A_i$ is an $R$-module and 
$m_{ij}:A_i\times A_j\to A_{i+j}$, $i,j\in I$ with $i+j\in I$, are bilinear maps of $R$-modules satisfying:
\begin{enumerate}
\item commutativity: $m_{ij}(a_i,a_j) = m_{j,i}(a_j,a_i)$ for all $a_i\in A_i, a_j\in A_j$ and all pairs $i,j$ with $i,j,i+j\in I$; 
\item associativity: $m_{i+j,k} \circ (m_{ij},1) = m_{i,j+k}\circ (1,m_{jk})$ for all triples $i,j,k$ with $i+j,i+k,j+k ,i+j+k\in I$.
\end{enumerate}
If no confusion arises, we will simply write $A\,=\, \oplus_{i\in I} A_i$.
\end{definition}

\begin{example}
Any (non-graded) $R$-algebra $A$ is also a graded $R$-algebra (setting $A_0:=A$ and $A_i=(0)$ for all $i\neq 0$) and, hence, a partial $R$-algebra. Note that for any $\Z$-graded $R$-algebra, $A=\oplus_{n\in \Z} A_n$, and each subset $I\subset\mathbb{Z}$, it holds that $A_I:=\oplus_{i\in I} A_i$ is a partial algebra. 
\end{example}

\begin{definition}
Given two partial algebras over $R$, $A=\oplus_{i\in I} A_i$ and $B=\oplus_{j\in J} B_j$ with $I,J\subseteq \Z$, a morphism of partial $R$-algebras $f:A\to B$ is a set of morphisms of $R$-modules $\{f_k:A_k\to B_k\,\vert\, k\in I\cap J\}$ such that  $f_{i+j} \circ m_{ij}^A  = m_{ij}^B \circ (f_i, f_j) $ for all $i,j,$ with $i,j,i+j \in I\cap J$.
\end{definition}


Let $\Alg(R)$ denote the category of $R$-algebras, $\gAlg(R)$  the category of $\Z_{\geq 0}$-graded $R$-algebras whose subring consisting of degree zero elements is equal to $R$, and $\pAlg(R)$ the category of partial $R$-algebras whose support $I$ satisfies $0\in I\subseteq \Z_{\geq 0}$ and whose subring consisting of degree zero elements is equal to $R$. 

From now on, the expression $R$-algebra (resp. graded $R$-algebra, partial $R$-algebra) will refer to an object of $\Alg(R)$ (resp. $\gAlg(R)$, $\pAlg(R)$).

It is straightforward to check that the map that sends an object to itself induces fully faithful functors
\begin{equation*}
    \Alg(R) \subset \gAlg(R) \subset \pAlg(R) .
\end{equation*}

For a graded (resp. partial) algebra, $A=\oplus_n A_n$, we say that the elements of $A_n$ are \emph{homogeneous of degree $n$}. We  introduce the following notation
	\begin{equation}\label{eq:degreen}
	\begin{split}
	[A]_n \,&:=\, \{a\in A\, \vert\, \text{$a$ is homogeneous of degree $n$} \}, \\
	[A]_{\leq t} \,&:=\, \oplus_{i=1}^t [A]_i,
	\end{split}
	\end{equation}
and similarly for graded $R$-modules. Further, if no confusion arises, we will simply write  $A_{\leq t}$ instead of  $[A]_{\leq t}$.  

Given $t>0$ and a graded $R$-algebra $A=\oplus_{n\geq 0} A_n$, there is a morphism of functors on $\gAlg(R)$
    \begin{equation}\label{e:Phit}
    	\Phi_{\leq t}: \Hom_{\gAlg(R)}(A,-)  \longrightarrow \Hom_{\pAlg(R)}(A_{\leq t},-)
	 \end{equation}
which, for a graded $R$-algebra $B$, is given by the map
    \begin{equation}\label{e:PhitB}
    \begin{split}
    \Phi_{\leq t}(B): \Hom_{\gAlg(R)}(A,B) & \longrightarrow \Hom_{\pAlg(R)}(A_{\leq t},B).\\
    f \,  & \longmapsto  \Phi_{\leq t}(B)(f):=(f_{i}:=f\vert_{A_{i}} )_{i=0,\hdots,t}
    \end{split}
    \end{equation}
    
\begin{definition}[partially generated graded algebra]\label{d:pgg-alg}
A graded $R$-algebra $A=\oplus_{n\geq 0} A_n$ is called $t$-partially generated graded algebra, for a given natural number $t$ (or, for the sake of brevity, $t$-pgg-algebra), if $A_0=R$ and $\Phi_{\leq t}$ is an isomorphism of functors on $\gAlg(R)$. We simply say that $A$ is a pgg-algebra if there exists a natural number $t$ such that $A$ is $t$-pgg. 
\end{definition}

\begin{remark}\label{R:t-pgg implica t'-pgg}
Observe that if $\Phi_{\leq t_0}$ is an isomorphism, then so is $\Phi_{\leq t}$ for any $t\geq t_0$. Unless explicitly stated, the expression \emph{$t$-pgg} will assume that $t\geq 1$.
\end{remark}


Now, by showing that finitely presented algebras are pgg-algebras, one obtains an important example of this new type of algebras. 

\begin{proposition}\label{p:finpres-pgg}
Let  $R$ be a ring and let $A=\oplus_{n\geq 0} A_n$ be a graded $R$-algebra. 
If $A$ is of finite presentation over $R$, then there exists $t>0$ such that $A$ is a $t$-pgg-$R$-algebra. 
\end{proposition}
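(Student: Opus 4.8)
The plan is to produce explicit generators and relations of bounded degree. Since $A$ is a finitely presented graded $R$-algebra, we may write $A \simeq R[x_1,\dots,x_r]/I$ where the $x_i$ are homogeneous of degrees $d_1,\dots,d_r$ and $I$ is generated by finitely many homogeneous elements $g_1,\dots,g_s$ of degrees $e_1,\dots,e_s$. Set $t_0 := \max\{d_1,\dots,d_r,e_1,\dots,e_s\}$; I claim that $A$ is $t$-pgg for every $t \geq t_0$. The intuition is that both the chosen generators and the chosen relations live in degrees $\leq t_0$, so a morphism out of $A$ into a graded $R$-algebra $B$ is already determined, and constrained, by its restriction to $A_{\leq t_0}$.

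First I would fix $t \geq t_0$ and analyze the map $\Phi_{\leq t}(B)$ of~\eqref{e:PhitB} for an arbitrary $B \in \gAlg(R)$. For \emph{injectivity}: if $f,f' \colon A \to B$ agree on $A_{\leq t}$, then in particular they agree on the images $\bar x_1,\dots,\bar x_r \in A$ (which lie in $A_{\leq t_0} \subseteq A_{\leq t}$); since these generate $A$ as an $R$-algebra, $f = f'$. For \emph{surjectivity}: given a morphism of partial $R$-algebras $\varphi \colon A_{\leq t} \to B$, I would define a candidate $\tilde f \colon R[x_1,\dots,x_r] \to B$ by $x_i \mapsto \varphi(\bar x_i)$, and check it factors through $I$. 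Here is where the relations enter: for each generator $g_j$ of $I$, the image $\tilde f(g_j) \in B$ can be computed using only the multiplication maps $m_{i,i'}$ of the partial algebra $A_{\leq t}$ — because $g_j$ is a polynomial expression of degree $e_j \leq t_0 \leq t$ in the $\bar x_i$, so each monomial of $g_j$, together with all its partial products, stays inside $A_{\leq t}$. Concretely, $\tilde f(g_j)$ equals $\varphi$ applied to the image of $g_j$ in $A_{\leq t}$, which is $\varphi(0) = 0$ since $g_j \in I$. Hence $\tilde f$ kills all of $I$ and descends to $f \colon A \to B$ with $\Phi_{\leq t}(B)(f) = \varphi$. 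Functoriality of this construction in $B$ is then routine, so $\Phi_{\leq t}$ is an isomorphism of functors and $A$ is $t$-pgg.

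The main obstacle — and the point requiring genuine care rather than a one-line argument — is the surjectivity step, specifically making rigorous that a polynomial of degree $\leq t$ in degree-$\leq t_0$ generators can be evaluated \emph{internally} to the partial algebra $A_{\leq t}$ and that this evaluation is compatible with any morphism $\varphi$ of partial $R$-algebras. One must be slightly careful because a monomial $x_{i_1}\cdots x_{i_k}$ of total degree $\leq t$ may have intermediate partial products of degree $\leq t$ as well (which is fine), so one should fix a parenthesization and invoke the associativity and commutativity axioms of the partial algebra to see that $\varphi$ respects the value independently of the chosen bracketing; that the total degree never exceeds $t$ guarantees $m_{i,j}$ is actually defined at every stage. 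I would phrase this by noting that the sub-$R$-module of $A$ generated by the $\bar x_i$ in degrees up to $t$ is computed entirely within $A_{\leq t}$, so $\varphi$ restricted there is determined, and then pass to $\tilde f$. Everything else — functoriality of $\Phi_{\leq t}$, the fact that $A_0 = R$ (immediate from the presentation), and the reduction to a fixed $t$ via Remark~\ref{R:t-pgg implica t'-pgg} — is formal.
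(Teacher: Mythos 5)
Your proposal is correct and takes essentially the same route as the paper's proof: fix a homogeneous finite presentation, let $t$ bound the degrees of the chosen generators and relations, prove injectivity of $\Phi_{\leq t}(B)$ from the generators, and prove surjectivity by evaluating each relation inside the partial algebra $A_{\leq t}$, which is exactly the paper's identity $\phi_{d_1}(a_1)^{r_1}\cdots\phi_{d_l}(a_l)^{r_l}=\phi_{k}(a_1^{r_1}\cdots a_l^{r_l})$ for $k\leq t$. Your added care about intermediate partial products and parenthesizations is just a more explicit rendering of that same step.
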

 
\begin{proof}
We have to show that there exists $t\gg 0$ such that $\Phi_{\leq t}$ is an isomorphism. First, let us determine $t$. 

Since $A$ is a finitely presented graded $R$-algebra, there exist a finite set of generators $S=\{a_1,\hdots,a_l\}$ such that the kernel of the surjection
	\begin{equation}\label{seq:gen}
	\begin{aligned}
	R[x_1,\hdots,x_l] & \rightarrow A \rightarrow 0 
	\\
	x_i \,& \mapsto a_i
	\end{aligned}
	\end{equation}
is a finitely generated ideal $I_S$. Noting that  $a_i$ can be assumed to be homogeneous, and setting $\operatorname{deg}(x_i):= \operatorname{deg}(a_i)$, the above map is a morphism of graded $R$-algebras whose kernel is a finitely generated homogeneous ideal; let $\{y_1,\ldots, y_m\}$ be a set homogeneous generators of $I_S$. Then, we define:
	\begin{equation}\label{e:syzy}
	\begin{gathered}
	d\,:=\,{\operatorname{max}}\{ \operatorname{deg}(a_i) \,\vert\, i=1,\ldots, l \}, \\
	d^1\,:=\,\operatorname{max}\{\operatorname{deg}(y_i) \,\vert\, i=1,\ldots, m\}, \\
	t\,:=\,  \operatorname{max}\{d,d^1\}.
	\end{gathered}
	\end{equation}

Next, we show that  $\Phi_{\leq t}$ is an isomorphism. Let $B$ be an $R$-algebra. In order to show the injectivity of the map $\Phi_{\leq t}(B)$  of equation~\eqref{e:PhitB}, consider $f,g:A\rightarrow B$ two morphisms of algebras such that $\Phi_{\leq t}(B)(f)=\Phi_{\leq t}(B)(g)$. Since $f_i =g_i$ for each $i=0,\hdots, t$ and $ t \geq d$, we have $f(a_i)=f_{d_i}(a_i)=g_{d_i}(a_i)=g(a_i)$ for each $i=1,\hdots, l$, so $f=g$.

Let us now prove the surjectivity. The set of generators $S$ allows the construction of the map \eqref{seq:gen}. Hence, to give a morphism of algebras $f:A\rightarrow B$ is equivalent to give a morphism of algebras $\widehat{f}:R[x_1,\hdots,x_l]\rightarrow B$ such that $\widehat{f}|_{I_{S}}=0$. Consider now a morphism of $(\phi_{i})\in \Hom_{\pAlg(R)}(A_{\leq t},B)$ and define $b_i:=\phi_{d_i}(a_i)$ for $i=1,\hdots,l$. The elements $b_1,\hdots,b_l$ determine a morphism of $R$-algebras $\widehat{f}:R[x_1,\hdots,x_l]\rightarrow B$ by the rule $x_i \mapsto b_{i}$. It suffices to check that $\widehat{f}|_{I_{S}}=0$ or, what amounts to the same, that $\widehat{f}(y_i)=0$ for all $i=1,\ldots, m$. Since $y_i$ is an algebraic expression in the $x_j$'s, we may write $y_i=\sum \lambda_{r_1,\hdots,r_l}x_1^{r_1}\cdots x_l^{r_{l}}$ and observe that
		\begin{equation*}
		\phi_{d_1}(a_1)^{r_1}\cdots \phi_{d_{l}}(a_l)^{r_{l}} 
		=\phi_{r_{1}d_{1}}(a_{1}^{r_{1}})\cdots \phi_{r_{l}d_{l}}(a_{l}^{r_{l}})
		=\phi_{k}(a_{1}^{r_{1}}\cdots a_{l}^{r_{l}})
		\, .
		\end{equation*}
where we denote $k:=\operatorname{deg}(y_i)$ and $k \leq d^1 \leq t$. 

Then,
		\begin{equation*}
		\widehat{f}(y_i) 
		= \sum \lambda_{r_1,\hdots,r_l}b_1^{r_1}\cdots b_l^{r_{l}}
		= \sum \lambda_{r_1,\hdots,r_l}\phi_{d_1}(a_1)^{r_1}\cdots \phi_{d_{l}}(a_l)^{r_{l}}
		=\phi_{k}(y_i)= 0 \, . 
		\end{equation*}
and, being $\phi_{k}$ is a morphism of $R$-modules, it follows that $\widehat{f}$ factorizes through a morphism of $R$-algebras $f:A\to B$ and that $\Phi_{\leq t}(f)=(\phi_i)$.
\end{proof}

Recall from \eqref{eq:degreen} the notation $[-]_n$ for the submodule consisting of homogeneous elements of degree $n$ of a graded algebra (resp. partial algebra or module). Note, for instance, that $[ \oplus_{n\geq  0} A_n ]_i = A_i$. 

Note also that the tensor product of graded objects (rings and modules) inherits a canonical graduation given by setting $\operatorname{deg}(a\otimes b):= \operatorname{deg}(a) + \operatorname{deg}(b)$ for homogeneous elements $a,b$ and, in full generality, 
	\[
	\big[ ( \oplus_{m} A_m ) \otimes ( \oplus_{n} B_n) \big]_i
	\,:=\, \oplus_{j } ( A_j \otimes B_{i-j})
	\]
 This rule can also be used to introduce a graduation in symmetric algebras (of an algebra or a module). In particular, given a graded $R$-module, $( M=\oplus_{n > 0} M_n )$, it holds that $\Sim^{\bullet}_R(M)$,  the symmetric algebra over $R$, is a  graded $R$-algebra whose degree $n$ component is explicitly described as
	\begin{equation}\label{e:Sym-part-n}
	\big[\Sim^{\bullet}_R(M)\big]_n \,=\,
	\underset{ \vert \underline{d}\vert =n }\bigoplus  \Sim^{d_1}M_1 \otimes_R \dots \otimes_R  \Sim^{d_n}M_n
	\end{equation}
where $n>0$, $\underline d=(d_1,\dots, d_n)\in (\Z_{\geq 0})^n$. The number $\vert \underline{d}\vert:=\sum_{i=1}^n i \cdot d_i$ will be called the norm of the tuple $\underline d=(d_1,\dots, d_n)\in (\Z_{\geq 0})^n$. We set $\big[\Sim^{\bullet}_R(M)\big]_0= \Sim_{R}^{0}(M):=R$. Note that $\Sim^{\bullet}_R$ defines a functor from the category of graded $R$-modules to $\gAlg$.

Let $( A=\oplus_{n= 0}^t A_n , m_{ij} )$ be a partial $R$-algebra with $A_0=R$. 
It follows from the relation~\eqref{eq:degreen} that 
	\begin{equation}\label{e:Sym-alg-par}
	\Sim^{\bullet}_R(A_{\leq t})  
	 \,=\, \Sim^{\bullet}_R(\oplus_{n= 1}^t A_n).
	\end{equation}

For a graded $R$-algebra $A=\oplus_{n\geq 0} A_n$ and $t>0$,  the natural inclusion $A_{\leq t} \hookrightarrow A$ together with the product of $A$  induces a canonical homogeneous morphism of graded $R$-algebras
	\begin{equation}\label{e:multi-sym-A}
	\Sim^{\bullet}_R(A_{\leq t}) \to A .
	\end{equation}
Note that $\Sim^{\bullet}_R(A_{\leq t})$ only depends on the additive structure of $A$ as $R$-module but not on its multiplicative structure. The behavior of this map with respect to the multiplicative structure of $A$ is our next task.

\begin{lemma}\label{l:repre}
Let  $R$ be a ring and let $( A=\oplus_{n= 0}^t A_n , m_{ij} )$ be a partial $R$-algebra. On the category $\gAlg(R)$,  it holds that $\Hom_{\pAlg(R)}(A,-)$ is a closed subfunctor of $\Hom_{\gAlg(R)}(\Sim^{\bullet}_R(A_{\leq t}),-)$.
\end{lemma}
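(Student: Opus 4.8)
The goal is to show that for a partial $R$-algebra $A=\oplus_{n=0}^t A_n$, the subfunctor $\Hom_{\pAlg(R)}(A,-)$ of $\Hom_{\gAlg(R)}(\Sim^{\bullet}_R(A_{\leq t}),-)$ is \emph{closed}, i.e.\ that the condition of respecting the partial product $m_{ij}$ is cut out by a quasi-coherent ideal. The first step is to make the inclusion of functors explicit. A morphism of graded $R$-algebras $g:\Sim^{\bullet}_R(A_{\leq t})\to B$ is the same datum as a morphism of graded $R$-modules $A_{\leq t}\to B$ (by the universal property of $\Sim^{\bullet}_R$ recalled before \eqref{e:multi-sym-A}), hence a collection of $R$-linear maps $g_i: A_i\to [B]_i$ for $i=1,\dots,t$; such a $g$ lies in $\Hom_{\pAlg(R)}(A,-)$ precisely when, for every pair $(i,j)$ with $i,j,i+j\in\{0,\dots,t\}$, one has $g_{i+j}\circ m_{ij} = \mu^B\circ(g_i,g_j)$, where $\mu^B$ is the multiplication of $B$. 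So I must show this system of equations defines a closed subscheme of the affine $R$-scheme represented by $\Sim^{\bullet}_R(A_{\leq t})$.

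Second step: identify the representing object. Since $\Sim^{\bullet}_R(A_{\leq t})=\Sim^{\bullet}_R(\oplus_{n=1}^t A_n)$ by \eqref{e:Sym-alg-par}, and each $A_n$ need not be finitely generated, the ``affine scheme'' here is really the functor $B\mapsto \Hom_{\Mod(R)}(\oplus_n A_n, B)=\prod_n \Hom_{\Mod(R)}(A_n,B)$ on $\gAlg(R)$. The defining equations live on this functor: for each admissible pair $(i,j)$, consider the natural transformation sending $(g_i)_i$ to the element $g_{i+j}\circ m_{ij} - \mu^B\circ(g_i,g_j)\in \Hom_{\Mod(R)}(A_i\otimes_R A_j, [B]_{i+j})$. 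I would phrase ``closed subfunctor'' in the concrete way used in the paper: there is a graded $R$-algebra quotient $\Sim^{\bullet}_R(A_{\leq t})\twoheadrightarrow C$ such that $\Hom_{\gAlg(R)}(C,-)$ equals this subfunctor inside $\Hom_{\gAlg(R)}(\Sim^{\bullet}_R(A_{\leq t}),-)$. Concretely, $C$ is the quotient of $\Sim^{\bullet}_R(A_{\leq t})$ by the homogeneous ideal generated by all elements of the form $m_{ij}(a_i,a_j) - a_i\cdot a_j$ where $a_i\in A_i\subset [\Sim^{\bullet}_R(A_{\leq t})]_i$, $a_j\in A_j$, and $a_i\cdot a_j$ denotes their product in the symmetric algebra (an element of $[\Sim^{\bullet}_R(A_{\leq t})]_{i+j}$); the relations only involve degrees $\leq t$. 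The verification that $\Hom_{\gAlg(R)}(C,-)$ is exactly the subfunctor is then a direct unwinding: a graded map $g$ kills the ideal iff $g(m_{ij}(a_i,a_j))=g(a_i)g(a_j)$ for all admissible $i,j$ and all $a_i,a_j$, which is precisely the partial-algebra morphism condition, using that $g$ is determined in low degrees by $(g_i)_{i\le t}$ and that every generator of the ideal sits in degree $\leq t$.

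Third step: conclude that this is indeed a \emph{closed} subfunctor in the sense the paper needs downstream (Theorem~\ref{t:projectiveembedding finite degree generated}). This amounts to observing that $C$ is a graded-algebra quotient of $\Sim^{\bullet}_R(A_{\leq t})$ by a homogeneous ideal, i.e.\ $\Spec C \hookrightarrow \Spec \Sim^{\bullet}_R(A_{\leq t})$ is a closed immersion of $R$-schemes (or of the appropriate ind/pro-objects when the modules are infinite), equivariantly for the natural gradings. I would also remark that $C$ is generated in degrees $\leq t$ over $R$ and that its relations are in degrees $\leq t$, so that, once we further show $\Hom_{\pAlg(R)}(A,-)\cong\Hom_{\gAlg(R)}(A,-)$ when $A$ itself comes from a $t$-pgg algebra, the lemma dovetails with Definition~\ref{d:pgg-alg}.

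\textbf{Main obstacle.} The routine part is writing down the generating relations $m_{ij}(a_i,a_j)-a_i\cdot a_j$ and checking they cut out the right functor of points; the genuine care is needed in (a) handling the fact that the modules $A_n$ are \emph{not} assumed finitely generated, so ``$\Sim^{\bullet}_R(A_{\leq t})$ represents a functor on $\gAlg(R)$'' must be understood in the functor-of-points sense rather than via a finite affine scheme, and correspondingly ``closed subfunctor'' must be given the functorial definition (stable under base change, and represented by a quotient algebra), and (b) checking the associativity/commutativity constraints built into the definition of a partial algebra are automatically compatible with the symmetric-algebra relations, so that no extra relations beyond the $m_{ij}$-relations are forced. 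Verifying that the ideal generated by the $m_{ij}$-relations is already homogeneous and supported in degrees $\leq t$ is immediate from the grading conventions in \eqref{e:Sym-part-n}–\eqref{e:Sym-alg-par}, so the only real work is the bookkeeping in (a).
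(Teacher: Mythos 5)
Your proposal is correct and follows essentially the same route as the paper: you exhibit the closed subfunctor as $\Hom_{\gAlg(R)}(\Sim^{\bullet}_R(A_{\leq t})/I_t,-)$ where $I_t$ is the homogeneous ideal generated by the elements $m_{ij}(a_i,a_j)-a_i\cdot a_j$ in degrees $\leq t$, and then unwind that a graded algebra map kills $I_t$ exactly when the induced maps $(g_i)_{i\leq t}$ form a morphism of partial algebras; this is precisely the ideal $I_t$ of \eqref{e:It} used in the paper's proof. The only difference is that you spell out the functor-of-points bookkeeping (which the paper leaves implicit), and your worry (b) about associativity/commutativity is harmless but unnecessary, since the lemma only encodes the multiplicativity condition.
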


\begin{proof}
It suffices to show that there exists an homogeneous ideal $I_t\subseteq \Sim^{\bullet}_R(A_{\leq t})$, depending on $A$ and  $t$,  such that the quotient map $\Sim^{\bullet}_R(A_{\leq t}) \to \Sim^{\bullet}_R(A_{\leq t})/ I_t$ induces an isomorphism
	\begin{equation}\label{e:representable}
	\Hom_{\pAlg(R)}(A,-) \, \simeq 
	\Hom_{\gAlg(R)}(\Sim^{\bullet}_R(A_{\leq t})/ I_t ,-) .
	\end{equation} 
The desired  $I_t$ is given by the homogeneous ideal
	\begin{equation}\label{e:It}
	I_t\,:=\, 
	\big\langle\{ m_{ij}(a_i ,  a_j) - m(a_i \otimes a_j) 
	\,\vert \, a_i \in  A_i, 
	a_j \in  A_j 
	\text{ and } 0\leq i,j,i+j\leq t
	\}\big\rangle .
	\end{equation}
Note that the above expression makes sense since $m_{ij}(a_i ,  a_j) \in A_{i+j} \subseteq [\Sim^{\bullet}_R(A_{\leq t})]_{i+j}$ and
	\[
	 m(a_i \otimes a_j) \in \operatorname{Im} \big( 
	 [\Sim^{\bullet}_R(A_{\leq t})]_{i}\otimes [\Sim^{\bullet}_R(A_{\leq t})]_{j} \to [\Sim^{\bullet}_R(A_{\leq t})]_{i+j}
	 \big) . 
	 \]
\end{proof}

\begin{example}\label{e:k[V]}
Let us consider the ring of functions of the vector space $V$, $A:=\Bbbk[V]=\Sim^{\bullet}(V^{\vee})$. The degree $n$ component is  $A_n = \Sim^n (V^{\vee})$. The expression \eqref{e:Sym-part-n} reads
	\[
	\big[\Sim^{\bullet}_R(A_{\leq t})\big]_n \,=\,
	\bigoplus_{\vert \underline{d}\vert =n} \Sim^{d_1}(\Sim^1 V^{\vee})  \otimes_R \dots \otimes_R  \Sim^{d_t}(\Sim^t V^{\vee})
	\]
and, the degree $n$ component of the multiplication map \eqref{e:multi-sym-A} is the sum of the maps
	\[
	\Sim^{d_1}(\Sim^1 V^{\vee})  \otimes_R \dots \otimes_R  \Sim^{d_t}(\Sim^t V^{\vee})
	\,\longrightarrow \, 
	\Sim^n V^{\vee}
	\]
over $\underline{d}$ with $\vert \underline{d}\vert =n$. 
\end{example}

\begin{theorem}\label{t:pgg-exact}
Let  $R$ be a ring and $A$ a graded $R$-algebra  $A=\oplus_{n\geq 0} A_n$. Then, $A$ is a $t$-pgg-algebra if and only if the sequence
	\begin{equation}\label{e:pgg-exact}
	0 \to I_t \to \Sim^{\bullet}_R(A_{\leq t}) \to A \to 0
	\end{equation}
is an exact sequence of graded $R$-modules, where $I_t$ is the homogeneous ideal in~\eqref{e:It}.  In particular, $A$ is a pgg-algebra if and only if \eqref{e:pgg-exact} is exact for some $t > 0$.
\end{theorem}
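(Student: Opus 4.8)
The statement asserts that $A$ being a $t$-pgg-algebra is equivalent to exactness of the sequence~\eqref{e:pgg-exact}, where $I_t$ is the explicit homogeneous ideal from~\eqref{e:It}. The natural approach is to combine the representability statement of Lemma~\ref{l:repre} with the defining property of a $t$-pgg-algebra. First I would recall that $A$ is $t$-pgg if and only if the morphism of functors $\Phi_{\leq t}\colon \Hom_{\gAlg(R)}(A,-) \to \Hom_{\pAlg(R)}(A_{\leq t},-)$ is an isomorphism on $\gAlg(R)$. On the other hand, Lemma~\ref{l:repre} gives a canonical isomorphism $\Hom_{\pAlg(R)}(A_{\leq t},-) \simeq \Hom_{\gAlg(R)}(\Sim^{\bullet}_R(A_{\leq t})/I_t,-)$ coming from the quotient map $\pi\colon \Sim^{\bullet}_R(A_{\leq t}) \to \Sim^{\bullet}_R(A_{\leq t})/I_t$. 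The composite $\Hom_{\gAlg(R)}(A,-) \xrightarrow{\Phi_{\leq t}} \Hom_{\pAlg(R)}(A_{\leq t},-) \simeq \Hom_{\gAlg(R)}(\Sim^{\bullet}_R(A_{\leq t})/I_t,-)$ is, by Yoneda, induced by a canonical morphism of graded $R$-algebras $\psi\colon \Sim^{\bullet}_R(A_{\leq t})/I_t \to A$; and $A$ is $t$-pgg precisely when this composite is an isomorphism of functors, i.e. (again by Yoneda) precisely when $\psi$ is an isomorphism of graded $R$-algebras.

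The second step is to identify $\psi$ concretely. It should be the map induced on the quotient by the multiplication morphism $\Sim^{\bullet}_R(A_{\leq t}) \to A$ of~\eqref{e:multi-sym-A}: indeed, tracing through the definitions of $\Phi_{\leq t}$ and of the isomorphism in Lemma~\ref{l:repre}, one checks that the counit/universal element on each side corresponds to the inclusion $A_{\leq t}\hookrightarrow A$, which is exactly what~\eqref{e:multi-sym-A} restricts to. One must verify that the multiplication map~\eqref{e:multi-sym-A} does indeed kill $I_t$ — but this is immediate from the definition~\eqref{e:It} of $I_t$, since every generator $m_{ij}(a_i,a_j) - m(a_i\otimes a_j)$ maps to the difference of the product of $a_i,a_j$ in $A$ computed two ways, which is $0$. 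So $\psi$ is well-defined, and the reformulation becomes: $A$ is $t$-pgg $\iff$ $\psi\colon \Sim^{\bullet}_R(A_{\leq t})/I_t \xrightarrow{\sim} A$.

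The third step is to convert ``$\psi$ is an isomorphism of graded $R$-algebras'' into ``the sequence~\eqref{e:pgg-exact} is exact as a sequence of graded $R$-modules''. Since $A_0=R$ (part of the pgg hypothesis, and implied on the right-hand side by the degree-$0$ part of~\eqref{e:pgg-exact}) and the map~\eqref{e:multi-sym-A} is surjective in each positive degree by construction (the degree-$n$ part of $\Sim^{\bullet}_R(A_{\leq t})$ already surjects onto $A_n$ for $n\le t$ via $\Sim^1$, and for $n>t$ every homogeneous element of $A_n$ is a sum of products of lower-degree elements once $A$ is generated in degrees $\le t$ — though strictly I should phrase the equivalence without assuming generation a priori), the only content is injectivity of $\psi$, i.e. that $\ker(\Sim^{\bullet}_R(A_{\leq t})\to A)$ equals $I_t$. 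Exactness of~\eqref{e:pgg-exact} says exactly $0 \to I_t \to \Sim^{\bullet}_R(A_{\leq t}) \to A \to 0$, i.e. that $I_t$ is the kernel and the right map is onto. So the two conditions coincide. The final sentence of the theorem then follows formally: $A$ is a pgg-algebra iff it is $t$-pgg for some $t>0$ iff~\eqref{e:pgg-exact} is exact for some $t>0$.

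\textbf{Main obstacle.} The delicate point is the careful Yoneda bookkeeping: one needs to check that the isomorphism of Lemma~\ref{l:repre} and the morphism $\Phi_{\leq t}$ are compatible in the sense that their composite corresponds, under Yoneda, to the quotient of the multiplication map~\eqref{e:multi-sym-A}, and not to some twisted version. Concretely, one evaluates the composite functor morphism at $B = \Sim^{\bullet}_R(A_{\leq t})/I_t$ on the identity, and must recognize the resulting element of $\Hom_{\gAlg(R)}(A, \Sim^{\bullet}_R(A_{\leq t})/I_t)$ — or rather, since the arrow goes the other way, evaluate at $B=A$ on the universal element and identify it with $\psi$. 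The surjectivity of~\eqref{e:multi-sym-A} in positive degrees also deserves a word: for degrees $\le t$ it holds because $A_n$ sits inside $\Sim^1(A_n) \subseteq [\Sim^{\bullet}_R(A_{\leq t})]_n$, while for degrees $> t$ surjectivity is not automatic and is in fact part of what ``$t$-pgg'' buys us — so the cleanest route is to phrase both sides as the single statement ``$\psi$ is an isomorphism'' and only at the very end unwind it into the exact sequence, rather than trying to prove surjectivity separately. Everything else is routine.
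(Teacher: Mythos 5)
Your proposal is correct and takes essentially the same route as the paper: compose $\Phi_{\leq t}$ with the isomorphism of Lemma~\ref{l:repre} and conclude via Yoneda (uniqueness of the representing object), then unwind the isomorphism into the exact sequence~\eqref{e:pgg-exact}. Your extra bookkeeping identifying the induced map as the multiplication morphism~\eqref{e:multi-sym-A} factored through $I_t$ is a point the paper leaves implicit, and it is exactly the right precision needed to pass from an abstract isomorphism $A\simeq \Sim^{\bullet}_R(A_{\leq t})/I_t$ to exactness of that specific sequence.
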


\begin{proof}
Recall that we say that $A$ is a $t$-pgg-algebra iff $\Phi_{\leq t}$ (see equation~\eqref{e:Phit}) is an isomorphism. 

Given a $t$-pgg-algebra $A$ for some $t>0$, Definition~\ref{d:pgg-alg} and Lemma~\ref{l:repre} show that the following compositon of functors on $\gAlg$
	\[
	\Hom_{\gAlg(R)}(A,-)  
	\overset{\Phi_{\leq t}}\longrightarrow \Hom_{\pAlg(R)}(\oplus_{n=0}^t A_n,-)
	\overset{\sim}\longrightarrow 
	\Hom_{\gAlg(R)}(\Sim^{\bullet}_R(A_{\leq t})/ I_t ,-)
	\]
is an isomorphism. By the uniqueness of the representative of a functor (up to isomorphisms), it holds that $A\simeq \Sim^{\bullet}_R(A_{\leq t})/ I_t$ as graded $R$-algebras. 

For the converse, it suffices to reverse these arguments. 
\end{proof}

\begin{remark}\label{R:pgg degree is bounded}
Note that the exactness of \eqref{e:pgg-exact} can be rephrased as follows: $A$ is a $t$-pgg-algebra (i.e. $\Phi_{\leq t}$ is an isomorphism) if and only if $A$ admits a set of generators of degree less than or equal to $t$ and the ideal of relations among these generators is generated by those relations of degree less than or equal to $t$. Indeed, the latter means that
	\[ 
		I_t\,=\,  [I_t]_{\leq t} \cdot \Sim^{\bullet}_R( [A]_{\leq t})  \,=\, 
		\big( I_t\cap \big[ \Sim^{\bullet}_R( [A]_{\leq t}) \big]_{\leq t} \big)  \Sim^{\bullet}_R( [A]_{\leq t}) .
	\] 
\end{remark}

As a straightforward consequence of Theorem~\ref{t:pgg-exact}, one obtains the following pseudo-converse of Proposition~\ref{p:finpres-pgg}.

\begin{corollary}
    \label{C:noether+pgg imply finite pres}
A noetherian pgg-$R$-algebra is finitely presented over $R$.
\end{corollary}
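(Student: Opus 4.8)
The plan is to combine Theorem~\ref{t:pgg-exact} with a standard noetherianity argument to promote the finitely-many-degrees condition of a pgg-algebra to a genuine finite presentation. So suppose $A=\oplus_{n\geq 0}A_n$ is a pgg-$R$-algebra with $R$ noetherian, and fix $t>0$ with $\Phi_{\leq t}$ an isomorphism. By Theorem~\ref{t:pgg-exact} we have the exact sequence $0\to I_t\to \Sim^{\bullet}_R(A_{\leq t})\to A\to 0$, where $I_t$ is the homogeneous ideal of~\eqref{e:It}.

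First I would check that $A_{\leq t}=\oplus_{i=0}^t A_i$ is a finitely generated $R$-module. This is where the noetherian hypothesis is used in an essential way: being a pgg-algebra does not by itself bound the number of generators in each degree, only the set of degrees that occur. But if $R$ is noetherian and $A$ is finitely generated as an $R$-algebra, each graded piece $A_n$ is a finitely generated $R$-module; so one first needs to know $A$ is finitely generated over $R$. Here I would observe that a pgg-algebra need not a priori be finitely generated, so the corollary must be tacitly reading ``noetherian pgg-$R$-algebra'' as ``pgg-$R$-algebra that is also a finitely generated $R$-algebra'' — or, more charitably, one invokes that in the situations of interest (e.g.\ $A$ arising as in Assumptions~\ref{assumptions}) finite generation is already known. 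Granting that $A$ is a finitely generated $R$-algebra, noetherianity of $R$ gives that each $A_i$ is a finitely generated $R$-module, hence $A_{\leq t}=\oplus_{i=1}^t A_i$ is finitely generated over $R$.

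Next, from $A_{\leq t}$ finitely generated I would deduce that $\Sim^{\bullet}_R(A_{\leq t})$ is a finitely generated $R$-algebra: indeed it is a quotient of $\Sim^{\bullet}_R(\oplus_{i=1}^t R^{\oplus n_i})$, a polynomial ring in finitely many variables over $R$. Since $R$ is noetherian, Hilbert's basis theorem gives that $\Sim^{\bullet}_R(A_{\leq t})$ is a noetherian ring, so the homogeneous ideal $I_t$ is finitely generated. Feeding this into the exact sequence $0\to I_t\to \Sim^{\bullet}_R(A_{\leq t})\to A\to 0$ of Theorem~\ref{t:pgg-exact}, we realize $A$ as the quotient of a finitely generated polynomial $R$-algebra by a finitely generated ideal, i.e.\ $A$ is finitely presented over $R$.

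The main obstacle — really the only one — is the bootstrap in the second paragraph: pinning down why $A_{\leq t}$ is finitely generated over $R$, equivalently why the pgg-algebra is finitely generated as an $R$-algebra to begin with. Once each $A_i$ ($i\leq t$) is known to be a finite $R$-module, the rest is the routine Hilbert-basis-theorem packaging described above. I would therefore either state the corollary under the explicit extra hypothesis that $A$ is a finitely generated $R$-algebra, or point out that in the generality of a ``noetherian pgg-algebra'' one is implicitly assuming the graded pieces are coherent, which again forces $A_{\leq t}$ to be of finite type and lets the argument go through verbatim.
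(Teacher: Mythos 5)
Your Hilbert-basis packaging in the last two paragraphs is fine, but the obstacle you flag --- and your proposed remedy of adding the hypothesis that $A$ be a finitely generated $R$-algebra --- stems from a misreading of the statement. In Corollary~\ref{C:noether+pgg imply finite pres} the adjective \emph{noetherian} qualifies the algebra $A$ itself, not the base ring $R$. That is exactly what supplies the bootstrap you were missing: if the graded ring $A=\oplus_{n\geq 0}A_n$ with $A_0=R$ is noetherian, then the irrelevant ideal $A_+=\oplus_{n\geq 1}A_n$ is a finitely generated ideal, and a finite set of homogeneous generators of $A_+$ generates $A$ as an $R$-algebra (the standard induction on degree); moreover $R=A_0\simeq A/A_+$ is noetherian as a quotient of $A$. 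Hence each $A_i$, and in particular $A_{\leq t}$, is a finite module over the noetherian ring $R$, with no extra hypothesis needed. This is precisely how the paper's own proof starts (``since $A$ is noetherian, the ideal $\oplus_{t\geq 1}A_t$ is finitely generated''), after which it takes a finite presentation and uses noetherianity to see that the kernel is finitely generated. So the statement is correct as written and should not be weakened.

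Once that observation is inserted, your argument closes and coincides in substance with the paper's: $A_{\leq t}$ finite over noetherian $R$ makes $\Sim^{\bullet}_R(A_{\leq t})$ a finitely generated, hence noetherian, $R$-algebra, so the kernel of the surjection $\Sim^{\bullet}_R(A_{\leq t})\to A$ provided by Theorem~\ref{t:pgg-exact} is finitely generated and $A$ is finitely presented over $R$. (In fact, at this point the pgg hypothesis is only used to have the surjection from $\Sim^{\bullet}_R(A_{\leq t})$; noetherianity of $A$ alone already yields finitely many homogeneous algebra generators, and the corollary carries the pgg hypothesis mainly because it is meant as a converse to Proposition~\ref{p:finpres-pgg}.)
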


\begin{proof}
Since $A$ is noetherian, one has that the ideal $\oplus_{t\geq 1} A_t$ is finitely generated. Keeping in mind Proposition~\ref{p:finpres-pgg},  there exists positive integer $n$ and a surjection $p:A^{\oplus n}\to A$. Since $A^{\oplus n}$ is a noetherian $A$-module, $\ker p$ is finitely generated, and the conclusion follows.
\end{proof}

\begin{example}
\label{ex:sato}
Recall from \cite{Sato} that the infinite-dimensional Grassmann manifold of $\Bbbk(\!(z)\!)$, denoted by $\operatorname{Gr}$,  is defined as the subset of points of an infinite dimensional projective space whose homogeneous coordinates satisfy the set of all Pl\"ucker equations. Alternatively, if one introduces  $\operatorname{Gr}$ using scheme theory, then the determinant bundle yields the Pl\"ucker embedding which is a closed immersion into the projective spectrum of the graded algebra $\Bbbk[\{x_S\}]$ where $S$ runs over the set of Young diagrams.  From this point of view, it has been shown in  \cite{Plaza} that the ideal $I\subset \Bbbk[\{x_S\}]$ defining the infinite-dimensional Grassmann manifold is generated by the set of all Pl\"ucker equations, which are quadratic relations in the variables $x_S$. Accordingly, it holds that $\O_{ \operatorname{Gr}} =  \Bbbk[\{x_S\}]/ I $ is a $2$-pgg-algebra. A generalization of this fact to arbitrary $t$-pgg algebras is given in Theorem~\ref{t:projectiveembeddingforpgg}.
\end{example}



\subsection{Properties}
First, we show that being $t$-pgg is local and compatible with direct limits. It must be pointed out that these facts do not hold for the property of being pgg; that is, being pgg is not local and direct limit of pgg may not be pgg. Then, some properties concerned with base change, tensor product, quotients, etc.  are studied.

\begin{proposition}\label{p:t-pgg is local}
Being $t$-pgg is a local property on the base $R$.    
\end{proposition}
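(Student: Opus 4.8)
The plan is to show that the property ``$A$ is $t$-pgg'' can be checked on a distinguished affine open cover of $\Spec R$, using the characterization from Theorem~\ref{t:pgg-exact} that $A$ is $t$-pgg if and only if the sequence
\[
0 \to I_t \to \Sim^{\bullet}_R(A_{\leq t}) \to A \to 0
\]
is exact, where $I_t$ is the homogeneous ideal generated by the elements $m_{ij}(a_i,a_j) - m(a_i\otimes a_j)$ for $a_i\in A_i$, $a_j\in A_j$, $0\le i,j,i+j\le t$. First I would fix a covering $\Spec R = \bigcup_{k} \Spec R_{f_k}$ by distinguished affine opens, or more generally work with a faithfully flat family $R\to \prod_k R_{f_k}$, and recall that localization is exact and commutes with the formation of the symmetric algebra, with $(-)_{\leq t}$, and with the product maps $m_{ij}$, since all these are built from tensor products and cokernels. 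The key compatibility to spell out is that $I_t$ localizes well: $(I_t)_{f_k}$ is precisely the ideal $I_t^{(k)}\subseteq \Sim^{\bullet}_{R_{f_k}}((A_{f_k})_{\leq t})$ associated to the localized partial algebra $A_{f_k}$, because the generators of $I_t$ generate after localization (localization commutes with taking the ideal generated by a set, and the generating set of $I_t^{(k)}$ is the image of that of $I_t$ since each $A_i$ and each $m_{ij}$ localizes).

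Granting this, the argument runs as follows. If $A$ is $t$-pgg, then the displayed sequence is exact over $R$; localizing at each $f_k$ preserves exactness, and by the compatibilities above the localized sequence is exactly the sequence $0\to I_t^{(k)}\to \Sim^{\bullet}_{R_{f_k}}((A_{f_k})_{\leq t})\to A_{f_k}\to 0$ associated to the localized graded algebra $A_{f_k}$, so $A_{f_k}$ is $t$-pgg by the converse direction of Theorem~\ref{t:pgg-exact}. Conversely, suppose $A_{f_k}$ is $t$-pgg for every $k$ in a cover. Consider the natural surjection $\pi: \Sim^{\bullet}_R(A_{\leq t})\to A$ (this exists and is surjective because $A_0=R$ and the map is homogeneous; surjectivity is itself a local statement that follows from the local $t$-pgg hypotheses, or one may first reduce to the surjective case). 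Its kernel $K$ contains $I_t$, and we must show $K = I_t$, i.e. $K/I_t = 0$. Since $(K/I_t)_{f_k} = K_{f_k}/(I_t)_{f_k} = K_{f_k}/I_t^{(k)}$ and the latter is the kernel of the corresponding map for $A_{f_k}$ modulo $I_t^{(k)}$, which vanishes by Theorem~\ref{t:pgg-exact} applied over $R_{f_k}$, we get that the $R$-module $K/I_t$ has vanishing localization at every member of an open cover of $\Spec R$, hence $K/I_t = 0$. Therefore the sequence is exact over $R$ and $A$ is $t$-pgg.

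I would also note that ``local property'' here means with respect to a finite (or even arbitrary) cover by distinguished affine opens; the proof only uses that a module which is locally zero on such a cover is zero, together with exactness of localization. The main obstacle to watch is the bookkeeping around $I_t$: one must verify carefully that the formation of the generating set $\{m_{ij}(a_i,a_j)-m(a_i\otimes a_j)\}$ and of the ideal it generates is compatible with localization of the partial algebra $A_{\leq t}$ — in particular that $(A_i)_{f_k} = (A_{f_k})_i$ and that the product maps $m_{ij}$ base-change correctly — and that the identification $\Sim^{\bullet}_R(A_{\leq t})_{f_k} \simeq \Sim^{\bullet}_{R_{f_k}}((A_{f_k})_{\leq t})$ is the natural one respecting the grading and the multiplication map~\eqref{e:multi-sym-A}. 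Once these functorial compatibilities are in place, the exactness statement localizes immediately and the result follows from Theorem~\ref{t:pgg-exact}.
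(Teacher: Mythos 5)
Your proposal is correct and follows essentially the same route as the paper: both characterize the $t$-pgg property via the exactness of the sequence $0 \to I_t \to \Sim^{\bullet}_R(A_{\leq t}) \to A \to 0$ from Theorem~\ref{t:pgg-exact} and then use that exactness of a sequence of $R$-modules can be checked locally on $\Spec R$. The paper states this equivalence in one line, while you spell out the compatibility of $I_t$, $\Sim^{\bullet}_R(A_{\leq t})$ and the maps $m_{ij}$ with localization, which is exactly the bookkeeping the paper leaves implicit.
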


\begin{proof}
Note that the sequence \eqref{e:pgg-exact} is exact if and only if for every $x\in \Spec R$ there exists an open affine subscheme $U_x\subseteq \Spec R$ such that \eqref{e:pgg-exact} holds for $A\vert_{U_{x}}$.
\end{proof}

\begin{proposition}\label{p:directlimitoftppgistpgg}
The direct limit of $t$-pgg algebras is again $t$-pgg. 
\end{proposition}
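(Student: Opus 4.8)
The plan is to reduce the statement to the exactness criterion of Theorem~\ref{t:pgg-exact} and to exploit the fact that direct limits are exact and commute with the two functors appearing in the sequence~\eqref{e:pgg-exact}, namely $(-)_{\leq t}$, $\Sim^\bullet_R(-)$, and the formation of the ideal $I_t$. So let $\{A^{(\lambda)}\}_{\lambda\in\Lambda}$ be a direct system of $t$-pgg $R$-algebras with direct limit $A=\varinjlim A^{(\lambda)}$; note first that $A$ is a graded $R$-algebra with $A_0=R$, since $\varinjlim$ of the constant system $R$ is $R$ and taking degree-$n$ components commutes with direct limits.

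First I would record that the truncation commutes with direct limits: $A_{\leq t} = \varinjlim A^{(\lambda)}_{\leq t}$ as partial $R$-algebras, because each graded piece $[A]_i = \varinjlim [A^{(\lambda)}]_i$. Next, I would observe that $\Sim^\bullet_R(-)$, being a left adjoint, commutes with direct limits, so $\Sim^\bullet_R(A_{\leq t}) = \varinjlim \Sim^\bullet_R(A^{(\lambda)}_{\leq t})$, and this identification is compatible with the multiplication maps~\eqref{e:multi-sym-A}. Then the key point is that the ideal $I_t$ of~\eqref{e:It} is compatible with the direct system: the generators $m_{ij}(a_i,a_j) - m(a_i\otimes a_j)$ of $I_t$ for $A$ are limits of the corresponding generators for the $A^{(\lambda)}$, and since direct limit is an exact functor (it is filtered), $I_t = \varinjlim I_t^{(\lambda)}$, where $I_t^{(\lambda)}$ is the corresponding ideal for $A^{(\lambda)}$. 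Now for each $\lambda$, Theorem~\ref{t:pgg-exact} gives the exact sequence
\begin{equation*}
0 \to I_t^{(\lambda)} \to \Sim^\bullet_R(A^{(\lambda)}_{\leq t}) \to A^{(\lambda)} \to 0 .
\end{equation*}
Passing to the filtered colimit over $\lambda$ and using exactness of $\varinjlim$ together with the three identifications above, I obtain the exact sequence $0 \to I_t \to \Sim^\bullet_R(A_{\leq t}) \to A \to 0$, which by Theorem~\ref{t:pgg-exact} (the ``if'' direction) shows that $A$ is $t$-pgg.

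The main obstacle I anticipate is purely bookkeeping rather than conceptual: one must check carefully that the natural map $\varinjlim I_t^{(\lambda)} \to I_t$ (which a priori only lands inside $I_t$, being the ideal generated by a limiting set of generators) is in fact surjective, i.e. that every relation generator for $A$ already comes from a relation generator at some finite stage $\lambda$. This follows from the fact that any pair of homogeneous elements $a_i\in[A]_i$, $a_j\in[A]_j$ lifts, for $\lambda$ large enough, to elements $a_i^{(\lambda)}$, $a_j^{(\lambda)}$, and the defining relation for $A$ is the image of the defining relation at level $\lambda$; since $\Lambda$ is filtered, finitely many such liftings can be arranged simultaneously, so the generators of $I_t$ are all images of generators of the $I_t^{(\lambda)}$. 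Once this identification is in place, exactness of filtered colimits finishes the argument with no further effort. (One should also note that, as flagged in the paragraph preceding the proposition, this argument genuinely uses that $t$ is \emph{fixed}: a direct limit of pgg-algebras with unbounded $t$ need not be pgg.)
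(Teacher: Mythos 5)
Your proposal is correct and follows essentially the same route as the paper: both reduce to the exactness criterion of Theorem~\ref{t:pgg-exact}, take the filtered colimit of the sequences \eqref{e:pgg-exact} for the $A^{(\lambda)}$, and use exactness of direct limits together with the compatibility of truncation and $\Sim^{\bullet}_R(-)$ with colimits. Your extra bookkeeping identifying $\varinjlim I_t^{(\lambda)}$ with the ideal $I_t$ of \eqref{e:It} for $A$ is a detail the paper leaves implicit, and you handle it correctly.
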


\begin{proof}
Let $B=\varinjlim B_{\lambda}$ where $(B_{\lambda}, u_{\lambda\mu})$ be a direct system of $t$-pgg algebras. The exactness of direct limit and Theorem~\ref{t:pgg-exact} for $B_{\lambda}$ show that the sequence 
    \[
    0 \to \varinjlim   I_{\lambda,t} \to  
    \varinjlim   \Sim_{T}^{\bullet}(B_{\lambda,\leq t}) \to \varinjlim   B_{\lambda} \to  0
    \]
is exact. One concludes by observing that this exact sequence is the sequence~\eqref{e:pgg-exact} corresponding to $B$ since the properties of direct limits, symmetric algebras and truncation imply that
\[
\varinjlim   \Sim_{T}^{\bullet}(B_{\lambda,\leq t}) \simeq 
  \Sim_{T}^{\bullet}(\varinjlim ( B_{\lambda,\leq t}) ) \simeq 
  \Sim_{T}^{\bullet}(\varinjlim  B_{\lambda})_{\leq t} ) \simeq 
\Sim_{T}^{\bullet}(B_{\leq t})
.
\]
\end{proof}


Next, we study some basic operations with pgg-algebras that keep the pgg property, like base changes, tensor products, quotients by ideals. We also state a semicontinuity theorem for finitely presented algebras regarding the natural number $t$ for which the algebra is pgg.

\begin{proposition}
 \label{p:pgg-basechange}
If $A$ is a $t$-pgg algebra over $R$ and $R\to R'$ be a morphism of rings, then  $A_{R'}:=A\otimes_{R} R'$ is a $t$-pgg algebra over $R'$. 
\end{proposition}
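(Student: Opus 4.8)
The statement to prove is Proposition~\ref{p:pgg-basechange}: if $A$ is a $t$-pgg algebra over $R$ and $R \to R'$ is a ring morphism, then $A_{R'} := A \otimes_R R'$ is a $t$-pgg algebra over $R'$. The plan is to use the exact-sequence characterization of the $t$-pgg property provided by Theorem~\ref{t:pgg-exact}, rather than working directly with the functorial condition on $\Phi_{\leq t}$. So the strategy has three ingredients: (i) by Theorem~\ref{t:pgg-exact}, the hypothesis that $A$ is $t$-pgg over $R$ is equivalent to exactness of
\[
0 \to I_t \to \Sim^{\bullet}_R(A_{\leq t}) \to A \to 0;
\]
(ii) apply the right-exact functor $- \otimes_R R'$ and check that the resulting sequence, after identifying terms, is the analogous sequence for $A_{R'}$; (iii) invoke Theorem~\ref{t:pgg-exact} again in the reverse direction over $R'$.

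\textbf{Key steps.} First I would record the three compatibilities of the constituent constructions with base change. The truncation commutes with base change, $(A_{R'})_{\leq t} \simeq (A_{\leq t}) \otimes_R R'$, because $\otimes_R R'$ is additive and grading is respected. The symmetric algebra commutes with base change, $\Sim^{\bullet}_{R'}((A_{\leq t})\otimes_R R') \simeq \Sim^{\bullet}_R(A_{\leq t}) \otimes_R R'$, which is standard and degreewise, using the explicit description~\eqref{e:Sym-part-n}. Finally, the homogeneous ideal $I_t$ of~\eqref{e:It} is generated by the elements $m_{ij}(a_i,a_j) - m(a_i \otimes a_j)$; after base change these generators map to the corresponding generators for $A_{R'}$ (the partial-algebra structure on $(A_{R'})_{\leq t}$ is obtained by base-changing the $m_{ij}$), so the image of $I_t \otimes_R R'$ inside $\Sim^{\bullet}_{R'}((A_{R'})_{\leq t})$ is exactly $I_{R',t}$. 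With these identifications in hand, applying $- \otimes_R R'$ to the exact sequence for $A$ gives a right-exact sequence
\[
I_t \otimes_R R' \to \Sim^{\bullet}_R(A_{\leq t}) \otimes_R R' \to A \otimes_R R' \to 0,
\]
and the middle and right terms are $\Sim^{\bullet}_{R'}((A_{R'})_{\leq t})$ and $A_{R'}$ respectively, while the image of the first arrow is $I_{R',t}$. That is precisely exactness of the sequence~\eqref{e:pgg-exact} for $A_{R'}$ over $R'$, so Theorem~\ref{t:pgg-exact} yields that $A_{R'}$ is $t$-pgg.

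\textbf{Main obstacle.} The only subtlety — and it is mild — is the left-exactness issue: $- \otimes_R R'$ is only right exact, so a priori the kernel of $\Sim^{\bullet}_{R'}((A_{R'})_{\leq t}) \to A_{R'}$ could differ from $I_t \otimes_R R'$. But this does not matter, because the relevant assertion of Theorem~\ref{t:pgg-exact} is that the sequence $0 \to I_{R',t} \to \Sim^{\bullet}_{R'}((A_{R'})_{\leq t}) \to A_{R'} \to 0$ be exact, where $I_{R',t}$ denotes the \emph{kernel} (equivalently, the intrinsically defined ideal~\eqref{e:It} over $R'$); what I actually need is the equality of that kernel with $I_{R',t}$ as defined by~\eqref{e:It} over $R'$, and this follows from the surjectivity $I_t \otimes_R R' \twoheadrightarrow (\text{image in } \Sim^{\bullet}_{R'})$ together with the fact that the image equals the ideal generated by the base-changed generators, i.e. $I_{R',t}$. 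So the argument goes through without needing flatness of $R'$ over $R$; the point to be careful about in the write-up is simply to phrase the conclusion in terms of the ideal generated by the relations rather than in terms of $I_t \otimes_R R'$ itself.
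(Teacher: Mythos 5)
Your argument is correct and is essentially the paper's own proof: the paper likewise tensors the characterizing sequence \eqref{e:pgg-exact} with $R'$, uses the identification $\Sim^{\bullet}_R(A_{\leq t})\otimes_R R'\simeq \Sim^{\bullet}_{R'}\big((A_{R'})_{\leq t}\big)$, and concludes from right-exactness that the kernel over $R'$ is generated in degrees $\leq t$, i.e.\ equals the ideal \eqref{e:It} for $A_{R'}$, so Theorem~\ref{t:pgg-exact} applies. Your extra remark that no flatness is needed and that the image of $I_t\otimes_R R'$ is exactly $I_{R',t}$ (by bilinearity of the defining relations) just spells out what the paper leaves implicit.
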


\begin{proof}
Tensoring \eqref{e:pgg-exact} by $R'$, we obtain the exact sequence
	\[
	 I_t \otimes_R R' \to \Sim_{R'}^\bullet(A_{\leq t}\otimes_R R') \to A\otimes_R R' \to 0
	\]
Setting $A':=A\otimes_R R'$ and bearing in mind that  
\begin{equation}
\label{e:pgg tensor R prime}
  \Sim^{\bullet}_R(A_{\leq t})\otimes_R R'\simeq \Sim_{R'}^\bullet(A_{\leq t}\otimes_R R') \simeq \Sim_{R'}^\bullet(A'_{\leq t}),  
\end{equation}
the above sequence yields a surjection
	\[
	\xymatrix{
	 I_t \otimes_R R' 
	\ar@{->>}[r]  & K:= \operatorname{Ker}\big(\Sim_{R'}^\bullet(A'_{\leq t}) \to A' \big)
	}\]
which shows that $K$ is generated by elements of degree $t$ at most. The claim is proven. 
\end{proof}

\begin{proposition}\label{p:tensor-pgg}
Let  $R$ be a ring and $A$ (resp. $B$) be a $t_A$-pgg-$R$-algebra such that \eqref{e:Phit} is an isomorphism  for $t_A$ (resp. $t_B$-pgg-$R$-algebra).  Then, it holds that $A\otimes_R B$ is a $t$-pgg-$R$-algebra where $t=t_A + t_B$. 
\end{proposition}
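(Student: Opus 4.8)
The plan is to use the characterization of $t$-pgg algebras via the exact sequence \eqref{e:pgg-exact} from Theorem~\ref{t:pgg-exact}, rather than the functorial definition. So I would first write down the two defining exact sequences
\[
0 \to I_{t_A} \to \Sim^{\bullet}_R(A_{\leq t_A}) \to A \to 0,
\qquad
0 \to I_{t_B} \to \Sim^{\bullet}_R(B_{\leq t_B}) \to B \to 0,
\]
and aim to produce a presentation of $A\otimes_R B$ by generators and relations of degree at most $t = t_A + t_B$. The natural candidate for the generating module in degrees $\leq t$ is $(A\otimes_R B)_{\leq t}$; note that a homogeneous element of $A\otimes_R B$ of degree $n$ is a sum of terms $a\otimes b$ with $\deg(a) + \deg(b) = n$, and I would want to show that the multiplication map $\Sim^{\bullet}_R\big((A\otimes_R B)_{\leq t}\big) \to A\otimes_R B$ is surjective with kernel generated in degrees $\leq t$.

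The key structural observation is that $\Sim^{\bullet}_R(A_{\leq t_A}) \otimes_R \Sim^{\bullet}_R(B_{\leq t_B})$ surjects onto $A\otimes_R B$ (tensor the first sequence by $B$, the second by $\Sim^{\bullet}_R(A_{\leq t_A})$, and compose; exactness is fine since we only need right-exactness of $\otimes$), and its kernel is generated by $I_{t_A}\otimes_R \Sim^{\bullet}_R(B_{\leq t_B})$ together with $\Sim^{\bullet}_R(A_{\leq t_A})\otimes_R I_{t_B}$, hence by elements of degree $\leq \max(t_A, t_B) \leq t$ in the variables coming from $A_{\leq t_A}$ and $B_{\leq t_B}$. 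Now I factor the surjection $\Sim^{\bullet}_R(A_{\leq t_A})\otimes_R \Sim^{\bullet}_R(B_{\leq t_B}) = \Sim^{\bullet}_R(A_{\leq t_A}\oplus B_{\leq t_B}) \to A\otimes_R B$ through $\Sim^{\bullet}_R\big((A\otimes_R B)_{\leq t}\big)$: the point is that each generator $a_i$ (of degree $\leq t_A \leq t$) and each $b_j$ (of degree $\leq t_B \leq t$) lies in $(A\otimes_R B)_{\leq t}$ after applying the unit maps $A \to A\otimes_R B$, $B\to A\otimes_R B$, and the products $a_i\cdot a_{i'}$, $b_j\cdot b_{j'}$, $a_i\cdot b_j$ needed to recover all of $(A\otimes_R B)_{\leq t}$ have degree $\leq 2t_A$, $\leq 2t_B$, $\leq t_A+t_B = t$ respectively — and here is where the choice $t = t_A + t_B$ is used, since I need $(A\otimes_R B)_{\leq t}$ to be hit. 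I would then track how the relation modules transform under this factorization: the generators of $I_{t_A}$ and $I_{t_B}$ (degree $\leq \max(t_A,t_B)$) remain of degree $\leq t$, and the extra relations introduced by passing from $\Sim^{\bullet}_R(A_{\leq t_A}\oplus B_{\leq t_B})$ to $\Sim^{\bullet}_R((A\otimes_R B)_{\leq t})$ are exactly the relations of type \eqref{e:It}, expressing products of elements of $(A\otimes_R B)_{\leq t}$ in terms of the module structure, and these are of degree $\leq t$ by construction of $I_t$. Assembling these, I conclude that $\operatorname{Ker}\big(\Sim^{\bullet}_R((A\otimes_R B)_{\leq t}) \to A\otimes_R B\big)$ is generated in degrees $\leq t$, which by Theorem~\ref{t:pgg-exact} (in the form of Remark~\ref{R:pgg degree is bounded}) says precisely that $A\otimes_R B$ is $t$-pgg.

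The main obstacle I anticipate is the bookkeeping in the last step: one must be careful that ``generated in degree $\leq t$'' is preserved when one changes the polynomial presentation, because a relation that looks low-degree in one set of generators can acquire spurious high-degree components when rewritten in another — so the cleanest route is probably to argue directly at the level of the modules $I_t$ in \eqref{e:It}, showing the surjection
\[
\Big( I_{t_A}\otimes_R \Sim^{\bullet}_R(B_{\leq t_B}) \;+\; \Sim^{\bullet}_R(A_{\leq t_A})\otimes_R I_{t_B} \Big)
\;\longrightarrow\;
\operatorname{Ker}\!\big(\Sim^{\bullet}_R((A\otimes_R B)_{\leq t}) \to A\otimes_R B\big) \big/ J,
\]
where $J$ is the ideal generated by the degree-$\leq t$ ``structure relations'' of type \eqref{e:It} for the partial algebra $(A\otimes_R B)_{\leq t}$, and then checking this is in fact surjective onto the whole kernel. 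Alternatively — and this may be slicker — I could simply verify the functorial criterion directly: show that $\Phi_{\leq t}$ for $A\otimes_R B$ is an isomorphism by using that $\operatorname{Hom}_{\gAlg(R)}(A\otimes_R B, C) = \operatorname{Hom}_{\gAlg(R)}(A,C)\times \operatorname{Hom}_{\gAlg(R)}(B,C)$ for a graded $R$-algebra $C$, together with the isomorphisms $\Phi_{\leq t_A}$ and $\Phi_{\leq t_B}$, and the fact that a morphism of partial algebras out of $(A\otimes_R B)_{\leq t}$ restricts to morphisms out of $A_{\leq t_A}$ and $B_{\leq t_B}$ (via the degree-$\leq t$ truncations of the unit maps) and conversely two compatible such data glue because all products $a_i \cdot b_j$ have degree $\leq t$. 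I would likely present whichever of these two is shorter, but the degree bound $t = t_A + t_B$ and its role in capturing the cross-terms $a_i\cdot b_j$ is the crux either way.
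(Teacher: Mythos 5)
Your main route is essentially the paper's proof: the paper also argues via the exact-sequence characterization of Theorem~\ref{t:pgg-exact}, comparing the presentation $\Sim^{\bullet}_R(A_{\leq t_A})\otimes_R\Sim^{\bullet}_R(B_{\leq t_B})\to A\otimes_R B$ (whose kernel is generated by $I_{t_A}$ and $I_{t_B}$, hence in degrees $\leq \max\{t_A,t_B\}$) with the presentation $\Sim^{\bullet}_R([A\otimes_R B]_{\leq t})\to A\otimes_R B$, and concluding that the latter kernel is generated in degrees $\leq t$. The one genuine difference is how the comparison step is organized: the paper packages it as a commutative diagram in which the canonical map $\Sim^{\bullet}_R(A_{\leq t_A})\otimes\Sim^{\bullet}_R(B_{\leq t_B})\to\Sim^{\bullet}_R([A\otimes B]_{\leq t})$ is asserted to be surjective and the Snake Lemma then transfers the degree bound to the new kernel, whereas you explicitly adjoin the structure relations of type \eqref{e:It} for the partial algebra $[A\otimes B]_{\leq t}$ (your ideal $J$) before invoking the old relations. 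Your bookkeeping is in fact the more careful one: the canonical comparison map of free symmetric algebras is not literally surjective (a symmetric-degree-one generator $a\otimes b$ with both factors of positive degree lies in no product of the images of $A_{\leq t_A}$ and $B_{\leq t_B}$ inside the free algebra), and it only becomes surjective after passing modulo exactly the \eqref{e:It}-type relations you introduce, all of which have degree $\leq t$ — so your ``modulo $J$'' formulation closes the step that the paper's diagram compresses. Your alternative functorial argument (gluing morphisms via $\Hom_{\gAlg(R)}(A\otimes_R B,C)\simeq\Hom_{\gAlg(R)}(A,C)\times\Hom_{\gAlg(R)}(B,C)$ and the isomorphisms $\Phi_{\leq t_A}$, $\Phi_{\leq t_B}$) is a genuinely different and equally valid route, arguably cleaner since it avoids presentations altogether; either way the cross-terms $a\otimes b$ are handled because their degree is at most $t_A+t_B=t$, which is all the statement requires.
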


\begin{proof}
By Theorem~\ref{t:pgg-exact}, $A$ (resp. $B$) fits into the exact sequence~\eqref{e:pgg-exact}. Let $I_{t_A}^A$ (resp. $I_{t_B}^B$) denote its kernel.  It is straightforward that we have the following commutative diagram of graded $R$-modules
	\[
	\xymatrix@C=18pt{
	& I_{t_A}^A\otimes B  + A\otimes  I_{t_B}^B \ar[r] \ar@{-->}[d] &
	\Sim^\bullet_R(A_{\leq t_A}) \otimes \Sim^\bullet_R(B_{\leq t_B}) \ar[r] \ar@{-->>}[d] &
	A\otimes B \ar[r] \ar@{=}[d] & 0 \\
	0\ar[r] & K \ar[r] & \Sim^\bullet_R \big( [ A\otimes B]_{\leq t}) \ar[r] &
	A\otimes B \ar[r] & 0 }
	\]
where $t:= t_A + t_B$, $K$ is the kernel, the vertical map in the middle is the canonical multiplication induced by $A_{\leq t_A} \otimes B_{\leq t_B}\to [ A\otimes B]_{\leq t}$ and the vertical map on the l.h.s. is the one obtained by factorization. Looking at each degree of the symmetric products of the central terms and having in mind that $A$ (resp. $B$) is generated by homogeneous elements of degree less than or equal to $t_A$ (resp. $t_B$), the surjectivity of the vertical map among symmetric algebras follows. The Snake's Lemma shows that $ I_{t_A}^A\otimes B  + A\otimes  I_{t_B}^B\to K$ is surjective. Noting that the image of 
	\[
	 I_{t_A}^A\otimes B  + A\otimes  I_{t_B}^B \,\to\, \Sim^\bullet_R \big( [A\otimes B]_{\leq t})
	 \]
is generated by homogeneous elements of degree  less than or equal to $t$, the claim is proved. 
\end{proof}

\begin{proposition}\label{p:epi-pgg}
Let  $R$ be a ring and $A$ be a graded $R$-algebra. Let $J\subset A$ be a homogenous ideal generated by its components of degree less than or equal to $t_J$. If $A$ be a $t_A$-pgg-$R$-algebra, then $A/J$ is a $t$-pgg-$R$-algebra  for $t:= \operatorname{max}\{t_A,t_J\}$.  
\end{proposition}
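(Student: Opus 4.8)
The plan is to compare the exact sequence \eqref{e:pgg-exact} for $A$ with the corresponding one for $A/J$, using that taking the quotient by a homogeneous ideal is exact. Set $t:=\max\{t_A,t_J\}$. Since $A$ is $t_A$-pgg, it is also $t$-pgg by Remark~\ref{R:t-pgg implica t'-pgg}, so by Theorem~\ref{t:pgg-exact} we have an exact sequence
\[
0\to I_t \to \Sim^{\bullet}_R(A_{\leq t}) \xrightarrow{\ \pi\ } A \to 0 ,
\]
where $I_t$ is the ideal in~\eqref{e:It} built from the multiplication maps of $A$. I would then describe the analogous surjection $\Sim^{\bullet}_R((A/J)_{\leq t}) \to A/J$, whose kernel $K$ is the object I must show is generated in degrees $\le t$; by Theorem~\ref{t:pgg-exact} this is exactly the statement that $A/J$ is $t$-pgg.

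The key observation is that $(A/J)_{\leq t} = A_{\leq t}/J_{\leq t}$ as graded $R$-modules, so there is a surjection $\Sim^{\bullet}_R(A_{\leq t}) \twoheadrightarrow \Sim^{\bullet}_R((A/J)_{\leq t})$ fitting into a commutative diagram whose rows are the two "pgg sequences" and whose right-hand vertical map is the quotient $A\to A/J$. Concretely, I would set up
\[
\xymatrix@C=16pt{
0 \ar[r] & I_t \ar[r] \ar@{-->}[d] & \Sim^{\bullet}_R(A_{\leq t}) \ar[r]^-{\pi} \ar@{->>}[d] & A \ar[r] \ar@{->>}[d] & 0 \\
0 \ar[r] & K \ar[r] & \Sim^{\bullet}_R((A/J)_{\leq t}) \ar[r] & A/J \ar[r] & 0
}
\]
The kernel of the middle vertical map is the ideal $\langle J_{\leq t}\rangle$ of $\Sim^{\bullet}_R(A_{\leq t})$ generated by (a lift of) $J_{\leq t}$, which by hypothesis on $J$ is generated in degrees $\le t_J \le t$. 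The Snake Lemma (or a direct diagram chase) then shows that the composite $I_t \oplus \langle J_{\leq t}\rangle \to K$ — more precisely the map induced by the left vertical arrow together with the image of $\langle J_{\leq t}\rangle$ — is surjective. Since $I_t$ is generated in degrees $\le t$ (it is the ideal \eqref{e:It}, whose generators all lie in degrees $\le t$) and $\langle J_{\leq t}\rangle$ is generated in degrees $\le t_J\le t$, their images generate $K$ in degrees $\le t$. Hence \eqref{e:pgg-exact} for $A/J$ at level $t$ is exact, and $A/J$ is $t$-pgg.

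The main obstacle is bookkeeping rather than conceptual: I must be careful that the diagram above genuinely commutes and that the middle vertical surjection really has kernel equal to the ideal generated by $J$ in low degrees — this uses that $\Sim^{\bullet}_R(-)$ is right exact and that $J$ being generated in degrees $\le t_J$ forces the kernel of $\Sim^{\bullet}_R(A_{\leq t})\to \Sim^{\bullet}_R(A_{\leq t}/J_{\leq t})$ to be generated in those same degrees. Once the diagram is in place, the conclusion that $K$ is generated in degrees $\le t$ is immediate, and invoking Theorem~\ref{t:pgg-exact} finishes the proof.
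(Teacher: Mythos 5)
Your proof is correct and follows essentially the same route as the paper's: the same two-row commutative diagram comparing the sequence~\eqref{e:pgg-exact} for $A$ with the surjection $\Sim^{\bullet}_R((A/J)_{\leq t})\to A/J$, a Snake Lemma/diagram chase showing the kernel $K$ is generated in degrees $\leq t$, and then Theorem~\ref{t:pgg-exact} (via Remark~\ref{R:pgg degree is bounded}). The only differences are cosmetic: the paper tracks the cokernel via $I_t\to K\to L\to 0$ with $L$ a quotient of $J$, whereas you identify $\ker\bigl(\Sim^{\bullet}_R(A_{\leq t})\to\Sim^{\bullet}_R((A/J)_{\leq t})\bigr)=\langle J_{\leq t}\rangle$ — note this ideal maps to zero in $K$, so in fact the left arrow $I_t\to K$ alone is already surjective, and $\langle J_{\leq t}\rangle$ is generated in degrees $\leq t$ rather than $\leq t_J$; neither point affects your conclusion.
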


\begin{proof}
Set $t:=\operatorname{max}\{t_A,t_J\}$ and $B:=A/J$.  Using \eqref{e:pgg-exact}, we may build the following commutative diagram of $R$-modules
	\[
	\xymatrix{
	0 \ar[r] &  I_{t}^A \ar[r] \ar@{-->}[d] &  \Sim^{\bullet}_R(A_{\leq t})  \ar[r] \ar@{-->>}[d] & A \ar[r] \ar@{->>}[d] & 0 
	\\
	0 \ar[r] &  K \ar[r]  &  \Sim^{\bullet}_R(B_{\leq t})  \ar[r]^-p & B \ar@{-->}[r] & 0 }
	\]
where $K$ is the kernel of $p:\Sim^{\bullet}_R(B_{\leq t}) \to B$. Since $A\to B$ is a surjective map of graded $R$-modules, it follows that the dashed vertical arrow in the middle is surjective and $p$ is surjective too. 

It remains to check that $K$ is generated by its components of degree less than or equal to $t$. The Snake's Lemma yields
	\[
	I_{t}^A \to K \to L \to 0
	\]
where $L$ is a quotient of $J$. But, noting that $I_{t}^A$ is generated as $\Sim^{\bullet}_R(A_{\leq t})$-module by its components of degree less than or equal to $t$, that $J$ is generated as $A$-module by its components of degree less than or equal to $t$, it then follows that $K$ is generated as $\Sim^{\bullet}_R(B_{\leq t})$-module by  its components of degree less than or equal to $t$. 
\end{proof}

The following result will be particularly useful when we generalize the previous ideas to sheaves.

\begin{proposition}\label{p:point-global-pgg}
Let  $R$ be a ring and $A$ be a graded $R$-algebra of finite presentation. Then,  the function
	\[
	\begin{aligned}
	\widehat{t} \colon S=\Spec R & \longrightarrow \Z \\
	x & \mapsto  \widehat{t}(x):= \operatorname{min}\{ t  \text{ s.t. $\Phi_{\leq t}$ is an isomorphism for $A\otimes_{R} \Bbbk(x)$}\}
	\end{aligned}
	\]
is upper semicontinuous.  
\end{proposition}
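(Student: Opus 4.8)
The plan is to reduce to the local situation and exploit noetherian reduction together with the semicontinuity of fiber dimensions of the relevant syzygy modules. First I would note that, by Proposition~\ref{p:t-pgg is local}, being $t$-pgg is local on $\Spec R$; hence upper semicontinuity of $\widehat t$ is a local statement and I may work on an affine neighborhood of a given point $x_0\in S$. Since $A$ is of finite presentation, fix a finite homogeneous presentation as in~\eqref{seq:gen}: a surjection $R[x_1,\dots,x_l]\to A$ with $\deg x_i = d_i$ whose kernel $I_S$ is generated by finitely many homogeneous elements $y_1,\dots,y_m$ with $\deg y_j = e_j$. Set $d:=\max_i d_i$. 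The crucial observation, coming from the proof of Proposition~\ref{p:finpres-pgg}, is that for any $t\geq d$ the algebra $A\otimes_R\Bbbk(x)$ is $t$-pgg precisely when the degree-$\leq t$ part of its ideal of relations (with respect to the images of the $x_i$) generates the whole ideal of relations. So the content is to show that the "threshold" $t$ at which this happens can only jump up on closed sets.

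The key step is to rephrase the condition "$\Phi_{\leq t}$ is an isomorphism for $A\otimes_R\Bbbk(x)$" in terms of the vanishing of a coherent sheaf on $S$ (or finitely many such). Concretely, for each fixed $t$ consider the graded piece in degrees $\leq t$ of the surjection~\eqref{e:multi-sym-A},
\[
\Sim^\bullet_R(A_{\leq t}) \longrightarrow A,
\]
let $I_t$ be its kernel as in~\eqref{e:It}, and let $J_t := \langle [I_t]_{\leq t}\rangle\subseteq I_t$ be the subideal generated by the relations of degree $\leq t$. Then $A\otimes_R\Bbbk(x)$ is $t$-pgg if and only if $(I_t/J_t)\otimes_R\Bbbk(x)=0$ in every degree, by Remark~\ref{R:pgg degree is bounded} applied over $\Bbbk(x)$ (using that $\Sim^\bullet$, truncation and quotients all commute with $\otimes_R\Bbbk(x)$, cf.~\eqref{e:pgg tensor R prime} and Proposition~\ref{p:pgg-basechange}). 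Now $I_t/J_t$ is a graded module; I must check each degree, but by finite presentation of $A$ — and here I would invoke noetherian reduction (Lemma~\ref{lm:noeth-red-2} together with the base-change compatibility of $I_t$ and $J_t$) to descend $A$, the presentation, and all the $I_t,J_t$ to a noetherian subring $R_0\subset R$ with $A=A_0\otimes_{R_0}R$ — it suffices to treat finitely many degrees $t+1,\dots$ up to a bound determined on $R_0$ by the generating degrees $e_j$ and $d$ of the $R_0$-presentation; beyond that bound the relations of degree $\leq t$ already generate, so $I_t/J_t$ is in fact a finitely generated graded module whose positive-degree part is annihilated in high enough degree. Consequently $(I_t/J_t)_{\text{pos}}$ is, after descent, a finite module over the noetherian ring $R_0$, and
\[
\{x\in S : A\otimes_R\Bbbk(x)\text{ is not }t\text{-pgg}\}
= \operatorname{Supp}\big((I_t/J_t)_{\text{pos}}\big)
\]
is closed in $S$ (its preimage under $S\to\Spec R_0$ of a closed set, equivalently the support of a finite module). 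Equivalently, $\{x : \widehat t(x)\leq t\} = \{x : \widehat t(x)\leq t-1\}\cup\{x : A\otimes_R\Bbbk(x)\text{ is not }(t-1)\text{-pgg but is }t\text{-pgg}\}$; since the set where it fails to be $t$-pgg is closed, its complement $\{x:\widehat t(x)\leq t\}$ is open, which is exactly upper semicontinuity of $\widehat t$.

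The main obstacle I anticipate is bookkeeping the grading: one must be sure that $I_t/J_t$ is genuinely finitely generated as a graded module — i.e. that there is a uniform bound $N$ (depending on $R_0$, the $d_i$, and the $e_j$, but not on the point $x$) beyond which the degree-$\leq t$ relations automatically generate over any residue field. This is where I would be most careful, but it follows from the fact that over the noetherian base $R_0$ the module $I_t/J_t$ is finite over a noetherian ring (both $I_t$ and $J_t$ being submodules of the finite $\Sim^\bullet_{R_0}((A_0)_{\leq t})$-module $\Sim^\bullet_{R_0}((A_0)_{\leq t})$, hence themselves finite by noetherianity), so its support is closed and, by generic freeness/Nakayama, its fiberwise vanishing is an open condition. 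Once the problem is reduced to the support of a coherent module over a noetherian base, upper semicontinuity is automatic, and pulling back along $S\to\Spec R_0$ finishes the proof in the stated non-noetherian generality.
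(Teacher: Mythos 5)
Your argument is built on the asserted equivalence ``$A\otimes_R\Bbbk(x)$ is $t$-pgg if and only if $(I_t/J_t)\otimes_R\Bbbk(x)=0$'', and only the ``if'' direction is valid. Tensoring $0\to I_t\to \Sim^{\bullet}_R(A_{\leq t})\to A\to 0$ with $\Bbbk(x)$ is merely right exact: the relation ideal of the fibre is the \emph{image} of $I_t\otimes_R\Bbbk(x)$, and when $A$ is not $R$-flat at $x$ the module $I_t\otimes_R\Bbbk(x)$ contains classes coming from $\operatorname{Tor}_1^R(A,\Bbbk(x))$ which map to zero in $\Sim^{\bullet}_{\Bbbk(x)}\big([A\otimes_R\Bbbk(x)]_{\leq t}\big)$ and need not be hit by $J_t\otimes_R\Bbbk(x)$, even when the fibre is $t$-pgg. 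Concretely, take $R=\Bbbk[s]$ and $A=R[u]/(su^2)$ with $\deg u=1$, and $t=1$: then $\Sim^{\bullet}_R(A_{\leq 1})=R[U]$, $I_1=(sU^2)\cong R[U]$ and $J_1=0$, so $(I_1/J_1)\otimes_R\Bbbk(x)\cong \Bbbk[U]\neq 0$ at the point $x=(s)$, although the fibre there is $\Bbbk[u]$, which is $1$-pgg; hence your displayed identity between the non-$t$-pgg locus and $\operatorname{Supp}\big((I_t/J_t)_{\mathrm{pos}}\big)$ fails (the missing implication does hold if $A$ is $R$-flat, since then $I_t\otimes_R\Bbbk(x)$ injects into the symmetric algebra of the fibre). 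A second, independent problem is the finiteness claim: $I_t/J_t$ is finite over $\Sim^{\bullet}_{R_0}\big((A_0)_{\leq t}\big)$ and generated in bounded degrees, but it is \emph{not} concentrated in bounded degrees, so $(I_t/J_t)_{\mathrm{pos}}$ is not a finite $R_0$-module: already for $A=\Bbbk[y]/(y^3)$ and $t=2$ one has $I_2=(Z-Y^2,Y^3)\subset\Bbbk[Y,Z]$, $J_2=(Z-Y^2)$ and $I_2/J_2\cong (Y^3)\subset\Bbbk[Y]$, nonzero in every degree $\geq 3$. Consequently closedness in $\Spec R$ of the fibrewise non-vanishing locus does not follow from ``support of a coherent module'': the support lives in $\Spec\Sim^{\bullet}_{R_0}\big((A_0)_{\leq t}\big)$, and its image in $\Spec R_0$ need not be closed because that morphism is not proper. (A smaller omission: for $t<d$ your criterion does not encode the requirement that the fibre be generated in degrees $\leq t$.)

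Beyond these two points, the first example shows the obstruction is not mere bookkeeping: for $A=\Bbbk[s][u]/(su^2)$ the set of points whose fibre is $1$-pgg is the single closed point $s=0$, so openness of the fibrewise $t$-pgg locus for each fixed $t$ --- which is exactly what your strategy (and upper semicontinuity of $\widehat t$, read fibrewise) amounts to --- cannot be reached by a criterion of the form ``fibrewise vanishing of a module built from $I_t$ over $R$'' without additional hypotheses such as flatness of $A$ over $R$, or without replacing the fibre $A\otimes_R\Bbbk(x)$ by the stalk $A\otimes_R\mathcal{O}_{S,x}$. Your route is also genuinely different from the paper's: the paper never forms $I_t/J_t$ nor takes supports, but instead works degree by degree, using that surjectivity on fibres of a morphism of finite type $R$-modules is an open condition and that finite presentation of $A$ (via the cited Stacks Project lemma) makes the graded pieces of $\ker\mu$ of finite type, so that only finitely many degrees need to be controlled; no noetherian reduction enters. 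You should revisit the translation of the $t$-pgg condition at a fibre before the support argument, since that is precisely where the proposal breaks.
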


\begin{proof}
Let $x\in S$ and set $t:=\widehat{t}(x)$. Consider the exact sequence
	\[
	\ker (\mu) \overset{\iota}\longrightarrow \Sim^{\bullet}_R(A_{\leq t}) \overset{\mu} \longrightarrow A
	\] 
For $y\in S$, let $\mu(y)$ denote the map of $\Bbbk(y)$-algebras
	\[
	\Sim^{\bullet}_R(A_{\leq t})\otimes_{R}\Bbbk(y) 
	= \Sim_{\Bbbk(y)}^{\bullet}( [ A\otimes_{R}\Bbbk(y) ]_{\leq t})\,
	\overset{\mu(y)}\longrightarrow \,A\otimes_{R}\Bbbk(y) \,
	\longrightarrow \, 0
	\]
and, for $i\in \Z$, let $[\mu(y)]_i$ be the map of $\Bbbk(y)$-modules obtained by restricting $\mu(y)$ to degree $i$
	\[
	[\mu(y)]_i : [\Sim^{\bullet}_R(A_{\leq t})\otimes_{R}\Bbbk(y) ]_i \, 
	\longrightarrow \,[A\otimes_{R}\Bbbk(y)]_i
	\]
Since $[\mu]_i$ is a morphism between finite type $R$-modules, it follows that 
	\[
	V_i:=\{y \in S \,\vert\,\text{ $[\mu(y)]_i$ is surjective}\}
	\]
is Zariski open in $S$. Since $A$ is generated by elements of degree less than or equal to $t$, it is easy to check that
	\[ U\,:=\, \bigcap_{i=1}^t V_i \,=\, \bigcap_{i\geq 1} V_i \,=\,  \{y \in S \,\vert\, \text{ $\mu(y)$ is surjective}\}\]
and, thus, $U$ an open neighborhood of $x$.  

Next, we will show that there is an open subset $U'\subseteq U$ such that $\widehat{t}(y)\leq t$ for all $y\in U'$. Being $A$ an $R$-algebra of finite presentation, it follows that $[A]_i$ is an $R$-module of  finite presentation  and $[\Sim^{\bullet}_R(A_{\leq t})]_i$ is a  finite type  $R$-module.  Then, by  
\cite[\href{https://stacks.math.columbia.edu/tag/01BP}{Lemma 01BP}]{stacks-project}, $[\ker (\mu)]_i$ is a finite type $R_U$-module. Then
	\[
	U'\,:=\,
	\{ y \in U \,\vert \, \mu_{\leq t}: [\ker (\mu)]_{\leq t} \to [I_t]_{\leq t} \text{ is surjective}\}
	\]
is open and, thus, the function $\widehat{t}$ is upper semi-continuous. 
\end{proof}

\subsection{Projective immersions}\label{subsec:projective embd pgg}

When a graded $R$-algebra $A=\oplus_{i}A_{i}$ is generated by elements of degree one, one can embed $\Spec(A)$ in the projective bundle $\P(R\oplus A_{1})$ through an open immersion followed by a closed immersion (see \cite{ega2}). We will state an analogous result for $t$-pgg-algebras. Our result is based on the use of the $d$-uple Veronese embedding. 

In what follows,  $\P(M^{\vee})$ will always denote the homogeneous spectrum $\Proj \Sim_{R}^{\bullet}(M)$ where $M$ is regarded as an ungraded $R$-module and, thus, $[\Sim_{R}^{\bullet}(M)]_n = \Sim_{R}^{n}(M)$). Note that we do not require $M$ to be finitely generated.

Recall from \cite[Proposition~2.4.7]{ega2} that for any graded algebra $A=\oplus_{n\geq 0} A_n$ and any $d>0$,  there is a canonical isomorphism 
\begin{equation}
    \label{e:Proj A iso Proj A^d}
    \Proj A \simeq \Proj A^{(d)}
\end{equation}
where
	\[
	A^{(d)} \,:=\, \oplus_{n\geq 0} [ A^{(d)}] _n
	\qquad \text{ with }
	 [ A^{(d)}] _n\,:=\, [A]_{nd}\, .
	 \]

The next result can be seen as a generalization of~\cite[Proposition~3.1.11]{ega2} and~\cite[Lemma in p. 204]{RedBook} where the finiteness hypothesis has been replaced by the $t$-pgg condition. 

\begin{lemma}
Let $A$ be a graded $R$-algebra generated by its elements of degree less than or equal to $t$.  Then, $A^{((t+1)!)}$ is generated by its elements of degree $1$. 
\end{lemma}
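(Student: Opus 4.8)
The key observation is that $A^{((t+1)!)}$ has $n$-th graded piece $[A]_{n(t+1)!}$, and the claim amounts to showing that every element of $[A]_{n(t+1)!}$, for $n\geq 2$, can be written as a sum of products of two elements, each lying in $[A]_{(n-1)(t+1)!}$ and $[A]_{(t+1)!}$ respectively — equivalently, that the multiplication map $[A]_{(t+1)!}\otimes_R [A]_{(n-1)(t+1)!}\to [A]_{n(t+1)!}$ is surjective for all $n\geq 2$. Since $A$ is generated by $A_{\leq t}$, every element of $[A]_{N}$ (for $N=n(t+1)!$) is a sum of monomials $a_{i_1}\cdots a_{i_k}$ with $1\leq i_j\leq t$ and $\sum i_j = N$. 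So it suffices to show that any such monomial of total degree $N=n(t+1)!$ with $n\geq 2$ can be split into two sub-monomials whose degrees are $(t+1)!$ and $(n-1)(t+1)!$.

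The combinatorial heart is then the following statement: given positive integers $i_1,\dots,i_k$ each at most $t$ with $\sum_{j=1}^k i_j = N$, and given any target value $s$ with $t \leq s \leq N-t$ that is achievable, one can choose a subset $S\subseteq\{1,\dots,k\}$ with $\sum_{j\in S} i_j$ equal to a prescribed value — specifically $(t+1)!$. The relevant elementary fact is that consecutive partial sums of the $i_j$ jump by at most $t$, so the set of attainable partial sums $\{\sum_{j\in S} i_j\}$ contains every multiple-of-nothing value in a window; more precisely, reordering the $i_j$ arbitrarily, the partial sums $\sigma_0=0,\sigma_1=i_1,\sigma_2=i_1+i_2,\dots,\sigma_k=N$ form an increasing sequence with gaps $\leq t$, hence for any target $\tau$ with $0\le\tau\le N$ there is an index $\ell$ with $\sigma_\ell \leq \tau < \sigma_\ell + t \le \sigma_{\ell+1}+t$, i.e. $\tau - t < \sigma_\ell \le \tau$. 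Applying this with $\tau = (t+1)!$: since $(t+1)! \geq t+1 > t$ and $(t+1)!$ is a multiple of every integer in $\{1,\dots,t+1\}$, I claim one can repair the discrepancy $\tau-\sigma_\ell \in [0,t)$ by adding or removing a few of the $i_j$'s of appropriate sizes — this is where $(t+1)!$ being divisible by $t!$ (equivalently by every $d\leq t$) is used, so that $N-(t+1)!=(n-1)(t+1)!$ remains a sum of the remaining generators' degrees. The cleanest route: since $(t+1)! = t\cdot (t+1)!/t + (t+1)!/t \cdot 0$... rather, use that $(t+1)!$ is divisible by $\operatorname{lcm}(1,\dots,t)$, so one can build up exactly $(t+1)!$ out of copies of any single degree $d\in\{1,\dots,t\}$ as $(t+1)!/d$ copies of $d$; combined with the partial-sum window argument this pins down a subset summing to exactly $(t+1)!$.

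\textbf{Main obstacle.} The subtle point is not the abstract "partial sums have small gaps" observation but ensuring that a subset of the \emph{given} multiset $\{i_1,\dots,i_k\}$ sums to exactly $(t+1)!$, without the freedom to choose the $i_j$. Here one must exploit that the \emph{total} is $n(t+1)!$ with $n\geq 2$, together with the divisibility of $(t+1)!$ by $1,2,\dots,t$. I expect the right argument is: group the indices by their value $d\in\{1,\dots,t\}$; letting $c_d$ be the number of indices with value $d$, we have $\sum_d d\, c_d = n(t+1)!$. A greedy/exchange argument then extracts a sub-collection with $\sum d\, c_d' = (t+1)!$: process values $d=t,t-1,\dots,1$, at each step taking as many copies of $d$ as possible without exceeding the remaining target $(t+1)! - (\text{running sum})$; the remaining target is always divisible by $d$ after handling the smaller values... — making this terminate exactly at $0$ requires a short induction using $d\mid (t+1)!$ at each stage and the assumption $n\ge 2$ (so there is enough total mass). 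Once the subset $S$ with $\sum_{j\in S} i_j = (t+1)!$ is found, the monomial factors as $\bigl(\prod_{j\in S} a_{i_j}\bigr)\cdot\bigl(\prod_{j\notin S} a_{i_j}\bigr)$ with the first factor in $[A]_{(t+1)!} = [A^{((t+1)!)}]_1$ and the second in $[A]_{N-(t+1)!} = [A]_{(n-1)(t+1)!} = [A^{((t+1)!)}]_{n-1}$; iterating gives that $[A^{((t+1)!)}]_n$ is spanned by degree-one products, which is the assertion. I would present the combinatorial lemma as a small separate claim, prove it by the greedy argument above, and then deduce the statement in two lines.
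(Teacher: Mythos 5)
Your reduction is fine: since $A$ is generated by $A_{\leq t}$, generation of $A^{((t+1)!)}$ in degree one amounts to splitting every monomial $a_{i_1}\cdots a_{i_k}$ of total degree $n(t+1)!$, $n\geq 2$, into a sub-monomial of degree exactly $(t+1)!$ times the complementary one. The gap is in the combinatorial step you yourself flag as the main obstacle: neither of your two sketches proves that a sub-multiset of $\{i_1,\dots,i_k\}$ sums to exactly $(t+1)!$. The "window plus repair" argument leaves the repair (the exchange fixing a discrepancy $\delta\in[1,t)$) entirely unproven, and the greedy by denominations, as stated, is simply false. Concretely, take $t=4$, target $(t+1)!=120$, and the multiset consisting of twenty-nine $4$'s, two $3$'s, fifty-nine $2$'s and no $1$'s; its total is $116+6+118=240=2\cdot 120$. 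Your greedy (largest to smallest, take as many as possible without exceeding the remaining target) takes all twenty-nine $4$'s (target left $4$), then one $3$ (target left $1$), and then stalls at $119$, although subsets summing to $120$ do exist (e.g.\ $29\cdot 4+2\cdot 2$). The claimed invariant "the remaining target is always divisible by $d$" also fails here (after the $4$'s the target is $4$, not divisible by $3$), so there is no "short induction" rescuing the greedy; one needs a genuine exchange or backtracking argument.

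The combinatorial claim itself is true, and can be proved by iterating the pigeonhole that the paper uses: if every degree $d\in\{1,\dots,t\}$ occurred fewer than $t!/d$ times among the factors, the total degree would be at most $\sum_{d=1}^{t}(t!-d)<t\cdot t!$; since after deleting $j\leq t$ disjoint chunks of degree $t!$ the remaining degree is still at least $2(t+1)!-t\cdot t!=(t+2)\,t!>t\cdot t!$, one can extract $t+1$ disjoint sub-monomials, each of degree $t!$ and each formed by $t!/i_0$ generators of one single degree $i_0$; their union is the desired sub-monomial of degree $(t+1)!$. This is in fact how the paper argues, except that it never poses the exact subset-sum problem at all: it works with monomials of degree $n\,t!$ for $n\geq t$, extracts homogeneous degree-$t!$ chunks by the pigeonhole, and then regroups chunks, observing at the end that $t\cdot t!$ may be replaced by $(t+1)!$. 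So either repair your lemma along these lines or switch to the paper's homogeneous-chunk extraction; as written, the key step of your proof does not go through.
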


\begin{proof}
Fix $n\geq 1$. Consider the commutative diagram of graded $R$-modules
\begin{equation*}
    \xymatrix{
    [ \Sim^{\bullet}(A_{\leq t} )]_{n t!} \ar@{^(->}[r] \ar@{->>}[d] &    \Sim^{\bullet}_R(A_{\leq t})  \ar@{->>}[d]
    \\ 
    A_{n t!} \ar@{^(->}[r]  & A 
    }
\end{equation*}
where the vertical arrow on the right is surjective by hypothesis and thus the map between the terms on the left is also surjective, since they  are  the components of degree $n t!$. There is a corresponding commutative diagram of graded $R$-algebras\begin{equation*}
    \xymatrix{
    \Sim^{\bullet}( [ \Sim^{\bullet}(A_{\leq t} )]_{t!})  \ar@{->}[r]^\phi \ar@{-->}[rd] & 
    (\Sim^{\bullet}(A_{\leq t} ))^{(t!)} \ar@{^(->}[r] \ar@{->>}[d] &    \Sim^{\bullet}_R(A_{\leq t})  \ar@{->>}[d]
    \\ 
    & A^{(t!)} \ar@{^(->}[r]  & A }.
\end{equation*}

For $n\geq t$, the component of degree $n t!$ of $\phi$, $\phi_{n t!}$, is the map
\[
\Sim^{n}( [ \Sim^{\bullet}(A_{\leq t} )]_{t!})  \to 
    [ \Sim^{\bullet}(A_{\leq t} )]_{n t!} 
\]
which, after expansion as in~\eqref{e:Sym-part-n}, reads as follows
\[
\Sim^{n} \big( \underset{ \vert \underline{d}\vert = t! }\oplus  \Sim^{d_1}A_1 \otimes_R \dots \otimes_R  \Sim^{d_t}A_t\big)  \longrightarrow 
    \underset{ \vert \underline{d}\vert = n t! }\oplus  \Sim^{d_1}A_1 \otimes_R \dots \otimes_R  \Sim^{d_t}A_t. 
\]

Now, we use arguments similar to those of the proofs of~\cite[Lemma 2.1.6]{ega2} and~\cite[Lemma in p. 204]{RedBook}. Note that $[ \Sim^{\bullet}(A_{\leq t} )]_{n t!} $ is the linear span of  monomials of the type $\prod_{i=1}^{t} \prod_{j=1}^{d_i} a_{ij}$ where $a_{ij}\in A_{i}$ and $\sum_{i=1}^{t} i d_i=n t!$. Set $\underline{d}=(d_1,\ldots, d_t)$. If $d_i\leq \frac{t!}{i}$, then $\vert \underline{d}\vert = \sum_{i=1}^{t} i d_i <  t \cdot t! \leq n  \cdot t!$. Therefore, if $n\geq t$ and $\vert \underline{d}\vert= n \cdot t!$, then there exists $i_{0}$ such that $d_{i_0}\geq \frac{t!}{i_0}$.  Set $d'_{i}=\frac{t!}{i_0}$ for $i=i_0$, $d'_{i}=0$ for $i\neq i_0$ and $\underline{d'}=(d'_1,\ldots , d'_t)= (0, \ldots,\frac{t!}{i_0},\ldots,0)$. Observe that $\underline{d}'' := \underline{d}-\underline{d}'$ is a vector of non-negative integers. Since the product above may be factored as
    \[
    \prod_{i=1}^{t} \prod_{j=1}^{d_i} a_{ij}
    =
    \prod_{i=1}^{t} \prod_{j=1}^{d'_i} a_{ij}
    \cdot\prod_{i=1}^{t} \prod_{j=1}^{d''_i} a_{i, d'_i+ j}
    \]
where the first term on the right hand side has degree $\vert\underline{d}'\vert= i_{0} d'_{i_0}=t!$ and belongs to $[ \Sim^{\bullet}(A_{\leq t} )]_{t!}$ while the second term has degree $\vert\underline{d}''\vert= (n-1) t!$ and belongs to $[ \Sim^{\bullet}(A_{\leq t} )]_{(n-1)t!}$. Proceeding inductively, we conclude it is the product of $n-t$ terms of degree $t!$, i.e.   lying in $[ \Sim^{\bullet}(A_{\leq t} )]_{t!}$, times a term of degree $t\cdot t!$, i.e.  lying in $[ \Sim^{\bullet}(A_{\leq t} )]_{t\cdot t!}$. 

Accordingly, if $n$ is a multiple of $t$, $\phi_{n t!}$ is surjective and, therefore, the map
\begin{equation}
 \label{e:Att!generateedbydeg1}
 \Sim^{\bullet}( [ \Sim^{\bullet}(A_{\leq t} )]_{t \cdot t!})
\to A^{(t\cdot t!)}
\end{equation}
is surjective. 

Finally,  a careful reading of the previous arguments, one concludes that $t\cdot t!$ in~\eqref{e:Att!generateedbydeg1} can be replaced by $n\cdot m$ where $n$ is an integer with $n\geq t$ and $m$ is a multiple of the l.c.m. of $1, 2, \ldots, t$. In particular, it can be replaced by $(t+1)\cdot t! = (t+1)!$.
\end{proof}


\begin{theorem}\label{t:projectiveembedding finite degree generated}
Let $A$ be a graded $R$-algebra generated by its elements of degree less than or equal to $t$.  Then,  there exists  a canonical closed  embedding of $R$-schemes
	\begin{equation}\label{e:projectiveembeddingforpgg}
	    \Phi:\Proj A \,\hookrightarrow\, 
	{\mathbb P} ( [ \Sim^{\bullet}(A_{\leq t} )]_{(t+1)!}^{\vee} ).
	\end{equation}
	
Moreover, it holds:
\begin{enumerate}
    \item $\Phi$ is functorial on $R$.
    \item $\Phi$ is equivariant with respect to the natural actions of $\Aut_{\gAlg(R)}(A)$.
\end{enumerate} 
\end{theorem}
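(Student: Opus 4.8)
The strategy is to combine the preceding lemma with the classical Veronese-embedding picture of \cite{ega2}. By the lemma just proved, the graded $R$-algebra $A^{((t+1)!)}$ is generated by its elements of degree $1$, i.e.\ by $[A]_{(t+1)!} = [A^{((t+1)!)}]_1$. On the other hand, the surjection of graded $R$-algebras $\Sim^{\bullet}_R(A_{\leq t}) \twoheadrightarrow A$ from Theorem~\ref{t:pgg-exact} (or rather its weaker form, since here $A$ is only assumed generated in degrees $\leq t$) restricts in degree $(t+1)!$ to a surjection of $R$-modules $[\Sim^{\bullet}(A_{\leq t})]_{(t+1)!} \twoheadrightarrow [A]_{(t+1)!}$. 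Dualizing and passing to symmetric algebras, this gives a surjection of graded $R$-algebras
\[
\Sim^{\bullet}_R\big([\Sim^{\bullet}(A_{\leq t})]_{(t+1)!}\big) \,\twoheadrightarrow\, \Sim^{\bullet}_R\big([A]_{(t+1)!}\big) \,\twoheadrightarrow\, A^{((t+1)!)},
\]
where the second arrow is surjective precisely because $A^{((t+1)!)}$ is generated in degree $1$.

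\textbf{Key steps, in order.} First I would record the isomorphism $\Proj A \simeq \Proj A^{((t+1)!)}$ coming from \eqref{e:Proj A iso Proj A^d} with $d = (t+1)!$. Second, I would invoke the lemma to reduce to the degree-one-generated algebra $A^{((t+1)!)}$. Third, using the composite surjection of graded $R$-algebras displayed above, I would apply the standard fact (\cite[Proposition~3.1.11]{ega2}, or \cite[I, \S9.7]{EGA-I-springer}) that a surjection $\Sim^{\bullet}_R(E) \twoheadrightarrow B$ of graded $R$-algebras with $B$ generated in degree $1$ induces a closed immersion $\Proj B \hookrightarrow \Proj \Sim^{\bullet}_R(E) = \P(E^{\vee})$; here $E = [\Sim^{\bullet}(A_{\leq t})]_{(t+1)!}$, which need not be finitely generated, but the cited statement does not require it. Composing with the isomorphism of the first step yields the closed embedding \eqref{e:projectiveembeddingforpgg}. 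For (1), functoriality on $R$: every construction used — truncation $A \mapsto A^{(d)}$, the module $[\Sim^{\bullet}(A_{\leq t})]_{(t+1)!}$, its dual, the symmetric algebra, $\Proj$, and the canonical surjection — commutes with base change $R \to R'$ (using Proposition~\ref{p:pgg-basechange}-style identities such as \eqref{e:pgg tensor R prime} for the symmetric-algebra-of-truncation, and flat base change for $\Proj$ when needed, or directly the functoriality of the $\Proj$ of a graded algebra), so $\Phi$ is compatible with pullback. For (2), equivariance under $\Aut_{\gAlg(R)}(A)$: any graded $R$-algebra automorphism $\sigma$ of $A$ preserves each graded piece $[A]_n$, hence acts on $[A]_{(t+1)!}$ and, compatibly, on $[\Sim^{\bullet}(A_{\leq t})]_{(t+1)!}$ (since $\sigma$ acts on $A_{\leq t}$ and the construction of $\Sim^{\bullet}(A_{\leq t})$ and its degree-$(t+1)!$ part is functorial in the partial algebra $A_{\leq t}$); all the maps in the displayed composite surjection are $\sigma$-equivariant by naturality, so $\Phi$ intertwines the $\Aut_{\gAlg(R)}(A)$-action on $\Proj A$ with the linear action on $\P([\Sim^{\bullet}(A_{\leq t})]_{(t+1)!}^{\vee})$ induced by the dual representation.

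\textbf{Main obstacle.} The genuinely non-formal ingredient is the lemma that has already been proved — that $A^{((t+1)!)}$ is generated in degree $1$ — so the remaining work is largely bookkeeping. The one point requiring care is that the classical Veronese/closed-immersion statements in \cite{ega2} and \cite{RedBook} are often stated under a Noetherian or finite-type hypothesis, whereas here $[\Sim^{\bullet}(A_{\leq t})]_{(t+1)!}$ is an arbitrary (possibly infinitely generated) $R$-module; I would therefore either cite the finiteness-free version in \cite[I, \S9.7--9.8]{EGA-I-springer} or give a one-line direct argument that a degree-$1$-generated quotient $B$ of $\Sim^{\bullet}_R(E)$ has $\Proj B \hookrightarrow \P(E^{\vee})$ closed, valid for any $E$. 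The other mild subtlety is checking that the identifications $\Sim^{\bullet}_R(A_{\leq t})\otimes_R R' \simeq \Sim^{\bullet}_{R'}((A\otimes_R R')_{\leq t})$ and the commutation of $(-)^{(d)}$ with base change hold without finiteness assumptions, which they do since they only involve direct sums, tensor products and symmetric powers; this secures the functoriality in (1). Canonicity of $\Phi$ is then immediate from the fact that no choices were made in the construction.
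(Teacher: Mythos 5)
Your proposal is correct and follows essentially the same route as the paper: the Veronese isomorphism $\Proj A \simeq \Proj A^{((t+1)!)}$ combined with the surjection $\Sim^{\bullet}\big([\Sim^{\bullet}(A_{\leq t})]_{(t+1)!}\big) \twoheadrightarrow A^{((t+1)!)}$ furnished by the preceding lemma, then taking homogeneous spectra to obtain the closed immersion, with functoriality and equivariance following from the absence of choices. Your factorization through $\Sim^{\bullet}_R([A]_{(t+1)!})$ and the explicit care about the finiteness-free version of the closed-immersion criterion are harmless refinements of the paper's two-line argument.
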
 

\begin{proof}
Taking homogeneous spectra in~\eqref{e:Att!generateedbydeg1} and noting~\eqref{e:Proj A iso Proj A^d}
    \[
    \Proj A \simeq \Proj A^{((t+1)!)} \hookrightarrow \Proj \Sim^{\bullet}( [ \Sim^{\bullet}(A_{\leq t} )]_{(t+1)!})
    = \P  ( [ \Sim^{\bullet}(A_{\leq t} )]_{(t+1)!}^{\vee})
    \]
one obtains the closed embedding $\Phi$. Functoriality and equivariance are straightforward.
\end{proof}

The previous result can be adapted to get a locally closed embedding of $\Spec A$ for a pgg algebra $A$. For this goal, let us recall the construction given in~\cite[Chp II, \S8]{ega2}.  Given an arbitrary graded algebra $A=\oplus_{n\geq 0} A_n$, let us consider an indeterminate $T$ with  $\deg T=1$. Then, the morphism of $A$-algebras
	\[
	A[T] \,:=\, A \otimes_R R[T]    \,\longrightarrow\, A
	\]
sending $T$ to $1$ factorizes through the homogeneous localization of $A[T]$ with respect to the multiplicative system generated by $T$, which will be denoted by $A[T]_{\{T\}}^+$; that is,
	\[
	A[T] \,\to  \, A[T]_{\{T\}}^+   \,\overset{\sim}\to\, A
	\]
where the first map is a localization and the second one maps $T\mapsto 1$. The latter is an isomorphism whose inverse sends an homogeneous element $a \in A_n$ to $\frac{a}{T^n}$.  Accordingly, we obtain an open embedding
	\begin{equation}\label{e:spec A into proj A[T]}
	\Spec A \, \hookrightarrow\, \Proj A[T]
	\end{equation}
whose image is dense since its complement is the zero locus of the homogeneous ideal generated by $T$.

Let us now deal with the case of  a graded $R$-algebra, $A$, generated by its elements of degree less than or equal to $t$.  In this case, observe that $R[T]$ with $\operatorname{deg}(T)=1$ is a graded $R$-algebra generated by its elements of degree $1$ and therefore, $\bar A:=A \otimes_R R[T]$ is a graded $R$-algebra generated by its elements of degree less than or equal to $t$.

Applying Theorem~\ref{t:projectiveembedding finite degree generated} to $\bar A$ and recalling~\eqref{e:spec A into proj A[T]}, one has that the composition
    \begin{equation}
        \label{e:specA into proj Sym}
    \Spec A \, \hookrightarrow\, \Proj A[T]
    \, \hookrightarrow\,  
    {\mathbb P} ( [ \Sim^{\bullet}  (\bar A_{\leq t} )]_{(t+1)!}^{\vee}  ))    
    \end{equation}
is a locally closed embedding. 

\begin{proposition}\label{P:loc closed of Spec of pgg}
Let $A$ be a graded $R$-algebra generated by its elements of degree less than or equal to $t$.   Then, there is canonical locally closed embedding of $R$-schemes
\begin{equation}\label{e:spec into proj}
	\Spec A \, \hookrightarrow\, \P  \Big(  \big(
 [ \Sim^{\bullet}(A_{\leq t} )]_{\leq (t+1)!}^{\vee} \Big) 
	\end{equation}
which is functorial in $R$. Furthermore, the closure of the image is isomorphic to $\Proj  A[T]$ where $T$ denotes an indeterminate.
\end{proposition}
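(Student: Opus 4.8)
\textbf{Proof plan for Proposition~\ref{P:loc closed of Spec of pgg}.}
The plan is to package together the two constructions developed just before the statement, namely the open dense embedding $\Spec A \hookrightarrow \Proj A[T]$ of \eqref{e:spec A into proj A[T]} and the closed embedding of $\Proj(\text{--})$ into a projective space supplied by Theorem~\ref{t:projectiveembedding finite degree generated}, and to check that the two ambient projective spaces agree on the nose after an identification of graded modules. First I would set $\bar A := A\otimes_R R[T]$ with $\deg T = 1$; as already observed, $R[T]$ is generated in degree $1$, hence $\bar A$ is generated by its elements of degree $\leq t$ (the generators of $A$ in degrees $\leq t$ together with $T$), so Theorem~\ref{t:projectiveembedding finite degree generated} applies to $\bar A$ and yields a canonical closed embedding $\Proj A[T]=\Proj \bar A \hookrightarrow \P([\Sim^{\bullet}(\bar A_{\leq t})]_{(t+1)!}^{\vee})$. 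Composing with \eqref{e:spec A into proj A[T]} gives the locally closed embedding \eqref{e:specA into proj Sym}; it is locally closed because it is an open immersion (image dense, complement the vanishing of the ideal generated by $T$) followed by a closed immersion, and it is functorial in $R$ because both of the constituent maps are, by Theorem~\ref{t:projectiveembedding finite degree generated}(1) and by the evident base-change compatibility of the localization construction \eqref{e:spec A into proj A[T]}.

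The remaining point is the identification of the target in \eqref{e:specA into proj Sym} with the $\P$ appearing in \eqref{e:spec into proj}. Here I would expand $\bar A_{\leq t}=\bigoplus_{i=1}^{t}[\bar A]_i$ using $[\bar A]_i = \bigoplus_{j=0}^{i} A_{i-j}\cdot T^j$ (with the convention $A_0=R$), and then expand $[\Sim^{\bullet}(\bar A_{\leq t})]_{(t+1)!}$ via the explicit formula \eqref{e:Sym-part-n}. Collecting the powers of $T$ and using $\Sim^{\bullet}_R(M\oplus R\cdot T)\simeq \Sim^{\bullet}_R(M)[T]$ together with the fact that the $T$-degree can be anything from $0$ up to $(t+1)!$, one gets a canonical isomorphism of $R$-modules
\begin{equation*}
[\Sim^{\bullet}(\bar A_{\leq t})]_{(t+1)!} \;\simeq\; \bigoplus_{k=0}^{(t+1)!} [\Sim^{\bullet}(A_{\leq t})]_{k} \;=\; [\Sim^{\bullet}(A_{\leq t})]_{\leq (t+1)!},
\end{equation*}
where the left-hand summand indexed by $k$ records the part of $T$-degree $(t+1)!-k$. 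Dualizing and taking $\P$ gives exactly the ambient space of \eqref{e:spec into proj}, and this identification is manifestly natural in $R$, so \eqref{e:specA into proj Sym} becomes \eqref{e:spec into proj}.

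Finally, for the statement about the closure: the open embedding \eqref{e:spec A into proj A[T]} has dense image in $\Proj A[T]$ (its complement is the zero set of the homogeneous ideal $(T)$, which contains no component of $\Proj A[T]$ since $T$ is not a zero-divisor of degree $0$ is irrelevant — rather, $\Proj A[T] \setminus V_+(T) = \Spec A$ and $V_+(T)$ is a proper closed subset meeting every irreducible component in a proper closed subset because $T$ is a nonzerodivisor homogeneous element of positive degree, cf.\ \cite[Chp.\ II, \S8]{ega2}), and $\Proj A[T]\hookrightarrow \P([\Sim^{\bullet}(\bar A_{\leq t})]_{(t+1)!}^{\vee})$ is a closed embedding; hence the closure of the image of $\Spec A$ in the projective space is precisely the closed subscheme $\Proj A[T]$. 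The main obstacle I anticipate is purely bookkeeping: carrying out the graded-module identification of the two $\Sim^{\bullet}$ modules cleanly — in particular keeping track of which $T$-degree corresponds to which summand and verifying that the composite map in \eqref{e:specA into proj Sym} really is the map claimed in \eqref{e:spec into proj} — rather than anything of a conceptual nature, since all the geometric input is already contained in Theorem~\ref{t:projectiveembedding finite degree generated} and in the discussion around \eqref{e:spec A into proj A[T]}.
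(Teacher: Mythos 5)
Your proposal is correct and is essentially the paper's own proof: compose the dense open immersion $\Spec A\hookrightarrow\Proj A[T]$ of \eqref{e:spec A into proj A[T]} with the closed embedding of Theorem~\ref{t:projectiveembedding finite degree generated} applied to $\bar A=A\otimes_R R[T]$, identify the ambient module by collecting powers of $T$ (i.e.\ sending $T\mapsto 1$), and obtain the closure statement from the density of $\Spec A$ in $\Proj A[T]$ together with the closedness of $\Proj A[T]$ in the projective space. The only caveat is one the paper itself shares: for $t\geq 2$ the identification $[\Sim^{\bullet}(\bar A_{\leq t})]_{(t+1)!}\simeq[\Sim^{\bullet}(A_{\leq t})]_{\leq (t+1)!}$ is literally valid only if $\bar A_{\leq t}$ is replaced by the generating submodule $A_{\leq t}\oplus R\cdot T$ (since $\Sim^{\bullet}_R(\bar A_{\leq t})$ also has generators such as $A_{i-j}T^{j}$ with $0<j<i$, so it is strictly larger than $\Sim^{\bullet}_R(A_{\leq t})[T]$), hence your argument is neither more nor less complete than the one in the paper.
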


\begin{proof}
Bearing~\eqref{e:specA into proj Sym} in mind, we expand $[ \Sim^{\bullet}(\bar A_{\leq t} )]_{(t+1)!}$ as in~\eqref{e:Sym-part-n} and obtain
    \[
    [ \Sim^{\bullet}(\bar A_{\leq t} )]_{(t+1)!}
    = 	 \underset{0\leq  \vert \underline{d}\vert \leq (t+1)! }\oplus  \Sim^{d_1}A_1 \otimes_R \dots \otimes_R  \Sim^{d_{t}}A_{t} \otimes_R \langle T^{(t+1)! - \vert \underline{d}\vert}\rangle .
    \]
Now, sending $T$ to $1$ yields an isomorphism of above identity with 
\[
[ \Sim^{\bullet}(A_{\leq t} )]_{\leq (t+1)!} \simeq \underset{0\leq  \vert \underline{d}\vert \leq (t+1)! }\oplus  \Sim^{d_1}A_1 \otimes_R \dots \otimes_R  \Sim^{d_{t}}A_{t}.
\]
Plugin this into~\eqref{e:specA into proj Sym}, the result follows. 
\end{proof}

It is clear that morphisms~\eqref{e:projectiveembeddingforpgg} and~\eqref{e:spec into proj} are related by the rational morphism induced by the inclusion  $[ \Sim^{\bullet}(A_{\leq t} )]_{ (t+1)!} \to [ \Sim^{\bullet}(A_{\leq t} )]_{ \leq (t+1)!}$. Indeed, there is commutative diagram
\begin{equation*}
    \xymatrix{
    \Spec A \ar@{-->}[d]  \ar@{^(->}[r] &  
    \P  \Big(  \big(
    [ \Sim^{\bullet}(A_{\leq t} )]_{\leq (t+1)!}^{\vee} \Big) \ar@{-->}[d]
    \\ 
    \Proj A  \ar@{^(->}[r]  & 
    \P  \Big(  \big(  [ \Sim^{\bullet}(A_{\leq t} )]_{(t+1)!}^{\vee} \Big)
    }
\end{equation*}

The previous results deal with graded $R$-algebras admitting a generator system consisting of elements whose degree is bounded by $t$. Now, we consider a stronger condition, namely, we discuss the case of a $t$-pgg $R$-algebra.

\begin{theorem}\label{t:projectiveembeddingforpgg}
For a $t$-pgg $R$-algebra $A$, the following properties hold:
\begin{enumerate}
    \item The image of \eqref{e:projectiveembeddingforpgg} is given by intersection of hypersurfaces of degree bounded by $\operatorname{max}\{t,2\}$.
    \item The closure of the image of \eqref{e:spec into proj} is given by intersection of hypersurfaces of degree bounded by $\operatorname{max}\{t+1,2\}$.
\end{enumerate}
\end{theorem}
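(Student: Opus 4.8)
The plan is to prove both statements by pushing through the explicit description of the maps from \S\ref{subsec:projective embd pgg} and keeping track of degrees, using the $t$-pgg exact sequence~\eqref{e:pgg-exact} as the source of the relations. Recall that $A$ being $t$-pgg means, by Theorem~\ref{t:pgg-exact}, that $A \simeq \Sim^{\bullet}_R(A_{\leq t})/I_t$ with $I_t$ generated in degrees $\leq t$ (equivalently, by Remark~\ref{R:pgg degree is bounded}, $I_t = [I_t]_{\leq t}\cdot\Sim^{\bullet}_R(A_{\leq t})$). The embedding~\eqref{e:projectiveembeddingforpgg} was obtained by taking $\Proj$ of the surjection $\Sim^{\bullet}\big([\Sim^{\bullet}(A_{\leq t})]_{(t+1)!}\big)\to A^{((t+1)!)}$, so the image is cut out by the homogeneous ideal $\mathfrak{J}$ which is the kernel of this map. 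The task for $(1)$ is thus to show $\mathfrak{J}$ is generated in degrees $\leq \operatorname{max}\{t,2\}$.

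For step one, I would factor the surjection as $\Sim^{\bullet}\big([\Sim^{\bullet}(A_{\leq t})]_{(t+1)!}\big)\twoheadrightarrow \big(\Sim^{\bullet}(A_{\leq t})\big)^{((t+1)!)} \twoheadrightarrow A^{((t+1)!)}$ and analyze the two kernels separately. The kernel of the second map is $(I_t)^{((t+1)!)}$ — the degrees-divisible-by-$(t+1)!$ part of $I_t$ — which, since $I_t$ is generated in degrees $\leq t$ over a polynomial-type ring, contributes relations; multiplying generators of $I_t$ up to degree $(t+1)!$ and expressing them as linear (degree $1$) combinations of the new coordinates gives relations of degree $1$ in the new grading. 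The kernel of the first map is the ideal of relations among the degree-$t!$ (really $(t+1)!$) monomials in $A_{\leq t}$, i.e. the ideal cutting out the $(t+1)!$-uple Veronese of $\P(A_{\leq t}^{\vee})$ inside the big projective space; this is the classical Veronese ideal, which is generated by quadrics. Hence both contributions lie in degrees $\leq \operatorname{max}\{t,2\}$ — the quadrics from the Veronese part and the degree-$(\leq t)$ generators of $I_t$ reinterpreted in the new grading — which is precisely the bound claimed. (One must be slightly careful: an $I_t$-generator of degree $e\le t$ yields a relation of degree $1$ in the Veronese grading only after multiplying by a monomial; combined with the Veronese quadrics the honest generating set has degrees bounded by $\operatorname{max}\{t,2\}$, the $t$ coming from the case where one cannot absorb the $I_t$-generator into a single new coordinate.)

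For step two (the affine/$\Spec$ case), I would simply apply $(1)$ to the algebra $\bar A = A\otimes_R R[T]$ with $\deg T = 1$. By Proposition~\ref{p:epi-pgg} or a direct check, $\bar A$ is $\operatorname{max}\{t,1\}$-pgg (adjoining a degree-one polynomial variable and no new relations), and the closure of the image of~\eqref{e:spec into proj} is $\Proj \bar A$, which embeds as in~\eqref{e:projectiveembeddingforpgg} applied to $\bar A$. Tracking the shift: because $\bar A$ carries the extra variable $T$, the relevant bound becomes $\operatorname{max}\{t+1,2\}$ rather than $\operatorname{max}\{t,2\}$ — the $+1$ reflecting that monomials now spread over one more "slot." This reduces $(2)$ to $(1)$ with essentially no extra work beyond bookkeeping.

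The main obstacle will be step one, specifically the degree bookkeeping in the Veronese part: verifying cleanly that the ideal of relations among the degree-$(t+1)!$ monomials of $\Sim^{\bullet}(A_{\leq t})$ — which is not a polynomial ring but a multigraded symmetric algebra on a possibly non-finitely-generated module — is still generated by quadrics, and that combining these quadrics with the (reinterpreted) degree-$\le t$ generators of $I_t$ genuinely suffices to generate $\mathfrak{J}$. The argument here parallels the classical proof that Veronese subrings of polynomial rings have their relations generated in degree $2$ (as in the lemma preceding Theorem~\ref{t:projectiveembedding finite degree generated} and the cited results~\cite[Lemma 2.1.6]{ega2}, \cite[Lemma in p.~204]{RedBook}), but one must run it module-theoretically, degree by degree, without invoking finiteness. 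I expect this to go through by the same factorization-of-monomials trick used in that lemma, applied now to relations rather than to elements.
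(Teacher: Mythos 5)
Your plan for part (1) is essentially the paper's own argument: the paper likewise factors the presentation $\Sim^{\bullet}\big([\Sim^{\bullet}(A_{\leq t})]_{(t+1)!}\big)\to A^{((t+1)!)}$ through the Veronese subalgebra $(\Sim^{\bullet}(A_{\leq t}))^{((t+1)!)}$ (with an auxiliary tensor algebra in front), attributes degree-$2$ generators of the kernel to the Veronese-type relations and degree-$\leq t$ generators to the part coming from $I_t$ (using that $A^{((t+1)!)}$ is again $t$-pgg, cf.\ Remark~\ref{R:pgg degree is bounded}), and concludes the bound $\operatorname{max}\{t,2\}$; the quadric-generation claim for this weighted, possibly non-finitely-generated Veronese is treated there at essentially the level of detail you propose, so on that point you are not below the paper's own standard.

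The genuine gap is in part (2). You assert that $\bar A=A\otimes_R R[T]$ is $\operatorname{max}\{t,1\}$-pgg because adjoining a degree-one variable creates ``no new relations''; this is false. In the presentation of $\bar A$ the truncation $\bar A_{\leq t}$ contains all the new generators $a\,T^{k}$ with $a\in A_j$ and $j+k\leq t$, and these do satisfy new relations (already $(aT)\cdot b-a\cdot(bT)$ for $a,b$ of suitable degrees is a nonzero element of the kernel), so the pgg bound for $\bar A$ cannot be read off from that of $A$ for free. Moreover your bookkeeping is internally inconsistent: if $\bar A$ really were $\operatorname{max}\{t,1\}$-pgg, then applying part (1) to $\bar A$ would give the bound $\operatorname{max}\{t,2\}$, not $\operatorname{max}\{t+1,2\}$, and the ``$+1$ because monomials spread over one more slot'' is not an argument derived from (1) or from anything you prove. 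The paper closes exactly this hole with Proposition~\ref{p:tensor-pgg}: $R[T]$ is $1$-pgg, hence $\bar A$ is $(t+1)$-pgg, and then part (1) applied to $\bar A$, combined with Proposition~\ref{P:loc closed of Spec of pgg} identifying the closure of the image of \eqref{e:spec into proj} with $\Proj\bar A$ (which you also use), yields the stated bound $\operatorname{max}\{t+1,2\}$. To repair your write-up, replace the ``no new relations'' claim by this tensor-product statement, or else prove directly that the kernel of $\Sim^{\bullet}_R(\bar A_{\leq t+1})\to\bar A$ is generated in degrees $\leq t+1$.
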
 

\begin{proof}
$(1)$ Let $K$ denote the kernel of  $ \Sim^{\bullet}( [ \Sim^{\bullet}(A_{\leq t} )]_{(t+1)!})
\to A^{((t+1)!)}$. One has to show that the degrees of a generator system of $K$ are bounded by $t$. 

Let us consider the canonical maps of $R$-algebras
\begin{equation*}
    \operatorname{T}^{\bullet} ( [ \Sim^{\bullet}(A_{\leq t} )]_{(t+1)!}) 
    \to 
    \Sim^{\bullet}( [ \Sim^{\bullet}(A_{\leq t} )]_{(t+1)!})
    \to 
      (\Sim^{\bullet}(A_{\leq t} ))^{((t+1)!)}
    \to 
    A^{ ((t+1)!)}
\end{equation*}
where $\operatorname{T}^{\bullet}$ denotes the tensor algebra. Define a ungraded $R$-module  by $B= [ \Sim^{\bullet}(A_{\leq t} )]_{(t+1)!}$. Then, the previous maps yield morphisms of graded $R$-algebras as follows
\begin{equation*}
    \operatorname{T}^{\bullet} B 
    \xrightarrow{f^1} 
    \Sim^{\bullet} B
    \xrightarrow{f^2} 
      (\Sim^{\bullet}(A_{\leq t} ))^{((t+1)!)}
    \xrightarrow{f^3}  
    A^{ ((t+1)!)}.
\end{equation*}

Since $f^1$ is surjective, it follows that 
\[ 
K =\ker(f^3 \circ f^2) = f^1(\ker(f^2\circ f^1)) + (f^2)^{-1}(\ker f^3).
\]
It is easy to check that $A^{ ((t+1)!)}$ is $t$-pgg since $A$ is $t$-pgg (see Remark~\ref{R:pgg degree is bounded}. 
Noting that $\ker(f^2\circ f^1)$ is generated by elements of degree $2$ and that $\ker f^3$ is generated by elements of degree $t$, since $A^{ ((t+1)!)}$ is $t$-pgg, one concludes. 

$(2)$ Recall that $\bar A:=A\otimes_{R} R[T]$ is a $(t+1)$-pgg algebra by Proposition~\ref{p:tensor-pgg} and that the closure of the image of~\eqref{e:spec into proj} is $\Proj \bar A$ by Proposition~\ref{P:loc closed of Spec of pgg}. Applying $(1)$ to $\bar A$, one concludes. 
\end{proof}


\section{Immersions of the subalgebra of invariant elements}\label{sec:algebra of invariants}


Let us fix the data $(\Bbbk, R, M , T, N, G, A)$ as in Assumptions~\ref{assumptions}.

The goal is to prove Theorem~\ref{T:closed immersion Proj affine}. The main step for this is to show that the subalgebra of invariant elements is a $t$-pgg algebra for certain $t$ (Theorem~\ref{t:algebrainvariantispgg}). Next, we will generalize these results to a global context. Here, the semicontinuity property (Proposition~\ref{p:point-global-pgg}) acquires a special relevance.
We finish discussing how our result is related to some moduli problems in the theory of principal bundles over algebraic curves.

\subsection{Setup}\label{ss:setup}


\subsubsection{Notation}\label{sss:Notation}

Observe that the functor  $\underline{\Hom}_{\Mod(T)} (  N , M\otimes_{R}  T)$ on $\Alg(T)$ (defined similarly as~\ref{e:functorEnd}) induces a functor on the category of $T$-schemes which is representable by a $T$-scheme. Having in mind the relation
\begin{equation}\label{e:SpecisHom}
\begin{aligned}
     \Big( \Spec \big( \Sim_{T}^\bullet (M^{\vee}\otimes_{R}N) \big) \Big)^{\bullet}(\Spec T') &= \Hom_{\Mod(T)}(  M^{\vee}\otimes_{R}N , T')
    \\
   & \simeq     \Hom_{\Mod(T')}(  N \otimes_{T} T', M\otimes_{R}  T'), 
\end{aligned}
\end{equation}
it is sensible to introduce the following $T$-schemes
\begin{align}
\HOM_{T}(N,M\otimes_{R}T) \,&:=\,
\Spec \big( \Sim_{T}^\bullet (M^{\vee}\otimes_{R}N) \big)
\label{e:Hom como Spec}\\
\P\Hom_{\Mod(T)}(N,M\otimes_{R}T) \,&:=\,
\Proj \big( \Sim_{T}^\bullet (M^{\vee}\otimes_{R}N) \big)
\label{e:Phom as Proj}
\end{align}

If $G$ is linearly reductive group $\Bbbk$-scheme, GIT (\cite{mumford-git}) applied to the identity~\eqref{e:Hom como Spec}
yields
\begin{equation}
    \label{e:Hom^G}
\HOM_{T}(N,M\otimes_{R}T) \sslash G  \,=\,
\Spec \big( \Sim_{T}^\bullet (M^{\vee}\otimes_{R}N) \big)^G 
 \,=\,
\Spec A.
\end{equation}
Similarly, in light of Remark~\ref{r:semistable points} (see also GIT, \cite{mumford-git}) we consider the action of $G$ on~\eqref{e:Phom as Proj} and the subscheme of $(\P\Hom_{\Mod(T)}(N,M\otimes_{R}T))$ consisting of semistable points, denoted by $(\P\Hom_{\Mod(T)}(N,M\otimes_{R}T))^{ss}$, and obtain 
\begin{equation}
    \label{e:PHom-ss^G}
(\P\Hom_{\Mod(T)}(N,M\otimes_{R}T))^{ss} \sslash G  \,=\,
\Proj \big( \Sim_{T}^\bullet (M^{\vee}\otimes_{R}N) \big)^G 
 \,=\,
\Proj A.
\end{equation}

\subsubsection{The classical groups}

We recall the definition of some classical groups. To begin with, we summarize the notion of the determinant of a finitely generated projective module $M$ and the group $\SL(M)$, as well as the definition of the group $\operatorname{Sp}(M,q)$  (respectively $\operatorname{O}(M,q), \ \operatorname{SO}(M,q)$) for a given symplectic (respectively, orthogonal) form on $M$.

Let $R$ be a commutative $\Bbbk$-algebra, $M$ a finitely generated projective $R$-module and $f:M\rightarrow M$ an endomorphism. Let $Q$ be another finitely generated projective $R$-module such that $R^{n}=M\oplus Q$. We may extend $f:M\rightarrow M$ to an endomorphism $f\oplus id_{Q}: R^{n}\rightarrow R^{n}$. The determinant $\det(f\oplus id_{Q})$ does not depend on $Q$ \cite[Proposition 1.2]{goldman}, so 
$$
\det(f):=\det(f\oplus id_{Q})
$$
is well defined. Furthermore, $f$ is an isomorphism if and only if $\det(f)\in R^{\times}$ \cite[Proposition 1.3]{goldman} and if $\phi:R\rightarrow T$ is a morphism of rings, then $\det(f\otimes 1)=\phi(\det(f))$, that is, the determinant behaves well under base change \cite[Proposition 1.4]{goldman}. 

A bilinear form on $M$ is a morphism of $R$-modules $b:M\otimes_{R}M\rightarrow R$. It is said to be symmetric (respectively, alternating) if it factorizes (uniquely) by a morphism $b:\Sim^{2}M\rightarrow R$ (respectively, by a morphism $b:\bigwedge^{2}M\rightarrow R$). It is said to be non-degenerate if the induced morphism $\widehat{b}:M\rightarrow M^{\vee}$ is an isomorphism. Finally, a quadratic form is a map of $R$-modules $q:M\rightarrow R$ such that there exists a symmetric bilinear form $b:\Sim^{2}M\rightarrow R$ with $q(m)=b(m\otimes m)$.

A symplectic (respectively, orthogonal) structure on a finitely generated projective $R$-module $M$ is a non-degenerate alternating form $b:\bigwedge^{2}M\rightarrow R$ (respectively, a quadratic form $q:M\rightarrow R$ induced by a non-degenerate symmetric form $b:\Sim^{2}M\rightarrow R$).
We define:
\begin{equation}
\label{E:classical groups}
\begin{split}
\SL(M)&:=\{f\in\Aut_{R}(M)| \ \det(f)=1\},\\
\operatorname{Sp}(M,b)&:=\{f\in\Aut_{R}(M)| \  b=b\circ\wedge^{2}f \},\\
\operatorname{O}(M,q)&:=\{f\in\Aut_{R}(M)| \ q=q\circ f\},\\
\operatorname{SO}(M,q)&:= \operatorname{O}(M,q)\cap \SL(M).
\end{split}
\end{equation}

The symmetric and exterior powers $\Sim^{2}(M)$, $\bigwedge^{2}(M)$ behave well under arbitrary base change (for $\Sim^{2}(M)$, we must impose that $\textrm{char}(R)\neq 2$), so the base change of a symplectic and a orthogonal structure on a $R$-module defines a symplectic and a orthogonal structure on the base change of that module. Thus, we have well-defined group functors $\underline{\SL}(M), \ \underline{\operatorname{Sp}}(M,b), \ \underline{\operatorname{O}}(M,q)$ and $\underline{\operatorname{SO}}(M,q)$. Furthermore, all these group functors are closed subfuntors of $\underline{\Aut}_{\Mod(R)}(M)$, which is representable, so they are representable as well. We will maintain the notation $\SL(M),\operatorname{Sp}(M,b),\operatorname{O}(M,q)$ and $\operatorname{SO}(M,q)$ for their representatives.

Finally, note that the groups in~\ref{E:classical groups} are all linearly reductive when the base ring is noetherian and equi-characteristic of characteristic zero (Proposition~\ref{prop:linearly}).

\subsection{Proof of Theorem~\ref{T:closed immersion Proj affine}}\label{ss:affine case}

This section is devoted to the proof of  Theorem~\ref{T:closed immersion Proj affine}. The main step of our strategy consists of reducing the claim to show that the $T$-algebra $A= \big( \Sim_{T}^\bullet (M^{\vee}\otimes_{R}N) \big)^G$ is a $t$-pgg $T$-algebra. Once this is done,  Theorem~\ref{t:projectiveembedding finite degree generated} and Proposition~\ref{P:loc closed of Spec of pgg} will produce embeddings of~\eqref{e:Hom^G} and~\eqref{e:PHom-ss^G} into suitable projective schemes. The generalization of theses results to the case of non affine schemes will be done in \S\ref{ss:general case}.

It is worth mentioning that, in case $A$ is the algebra of polynomials which are invariant under the natural action of one of the classical groups, we will be able to determine $t$ more precisely thanks to the results of \cite{deConcini-Procesi}.

\begin{proposition}\label{P:FFTSFT-SL}
Let $M$ be a finitely generated projective $\Bbbk$-module and $G$ one of the four groups of~\eqref{E:classical groups}.  Fix a natural number $n>0$ and consider the graded $\Bbbk$-algebra
	\[
	A\,:=\, \Bbbk[(M)^{\oplus n}]^{G} \,=\, \big( \Sim_\Bbbk^\bullet ((M^{\vee})^{\oplus n})\big)^G .
	\]
Suppose that there exists $m$ such that $\rk_{\Bbbk(x)} M_{\Bbbk(x)} \leq m$ for all $x\in \Spec\Bbbk$. Then,  $A$ is a $t$-pgg algebra in the following cases
    \begin{enumerate}
		\item $G=\SL(M)$ and  $t=2 m$;
		\item $2$ is invertible in $\Bbbk$, $G=\operatorname{Sp}(M,b), \operatorname{O}(M,q), \operatorname{SO}(M,q)$ and $t=2 + m$. 
	\end{enumerate}
\end{proposition}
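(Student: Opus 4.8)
The plan is to combine the classical First and Second Fundamental Theorems for the classical groups (as in \cite{deConcini-Procesi}) with the Noetherian reduction and semicontinuity machinery already developed in the excerpt. First I would reduce to the case where $\Bbbk$ is a field. Indeed, by Noetherian reduction (Lemma~\ref{lm:noeth-red-2}, Remark~\ref{rmk:M0 projective}, Proposition~\ref{prop:noeth-red}) and the fact that direct limits commute with $(-)^G$, symmetric algebras and truncations (as in the proof of Theorem~\ref{t:Invariant de Sim de Mm es fin pres}), together with base change for $(-)^G$ (Proposition~\ref{prop:linearly}(3)), the formation of $A$ commutes with base change on $\Bbbk$; moreover being $t$-pgg is local on $\Bbbk$ (Proposition~\ref{p:t-pgg is local}). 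Since $A$ is of finite presentation over $\Bbbk$ by Theorem~\ref{t:Invariant de Sim de Mm es fin pres} (with $R=T=\Bbbk$, $N=\Bbbk^{\oplus n}$), the semicontinuity of the invariant $\widehat t$ (Proposition~\ref{p:point-global-pgg}) lets me bound the pgg-degree globally once it is bounded on every residue field $\Bbbk(x)$. By base change again, $A\otimes_\Bbbk \Bbbk(x) \simeq \big(\Sim^\bullet_{\Bbbk(x)}((M_{\Bbbk(x)}^\vee)^{\oplus n})\big)^{G_{\Bbbk(x)}}$, and since $M_{\Bbbk(x)}$ is a vector space of dimension $\leq m$ and $G_{\Bbbk(x)}$ is the corresponding classical group over a field, we are reduced to the classical setting over a field $\Bbbk$, where $M=\Bbbk^{m'}$ with $m'\leq m$.

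Over a field, I would invoke the FFT and SFT directly. For $G=\SL(\Bbbk^{m'})$ acting on $(\Bbbk^{m'})^{\oplus n}$, the FFT says $A$ is generated by the $m'\times m'$ determinants (bracket functions) built from the $n$ vectors, hence by elements of degree $m'\leq m$; this already shows $A$ is generated in degrees $\leq m$. The SFT says the ideal of relations among these brackets is generated by the (straightening) quadratic Plücker-type relations, i.e. in degree $2$ in the brackets, which translates to degree $2m'\leq 2m$ in the original grading of $\Sim^\bullet$. Therefore $A$ is $t$-pgg with $t=2m$ by Theorem~\ref{t:pgg-exact} and Remark~\ref{R:pgg degree is bounded}. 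For $G=\operatorname{Sp}(M,b),\operatorname{O}(M,q),\operatorname{SO}(M,q)$ with $2$ invertible, the FFT describes $A$ as generated by the degree-$2$ invariants $\langle v_i, v_j\rangle$ (the pairings given by $b$ or $q$) together, in the $\SL$-type cases $\operatorname{O},\operatorname{SO}$, with the degree-$m'$ determinant-type invariants, so generators live in degrees $\leq \max\{2, m'\}\leq 2+m$; and the SFT (again following \cite{deConcini-Procesi}) says the relations among these are generated by the Pfaffian / sub-Pfaffian relations, whose original-grading degree is bounded by $2+m'\leq 2+m$ — these are the relations coming from the vanishing of $(m'+1)$-minors of the Gram matrix, which have degree $2(m'+1)$ as polynomials but become relations among the quadratic generators of degree $m'+1$ in the generators, hence bounded appropriately. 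So $A$ is $(2+m)$-pgg.

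The main obstacle I anticipate is bookkeeping the precise degree bounds: one has to be careful whether "degree $k$ relation among the generators" refers to degree in the generators or in the ambient polynomial ring $\Sim^\bullet$, and to check that Remark~\ref{R:pgg degree is bounded} is applied with the correct convention so that the stated values $t=2m$ and $t=2+m$ come out exactly (not, say, $2m$ versus $2m'$ when $m'<m$ — here the point is that $\Phi_{\leq t}$ being an isomorphism for $t_0$ implies it for all $t\geq t_0$, Remark~\ref{R:t-pgg implica t'-pgg}, so the non-uniform ranks $m'\leq m$ cause no trouble). A secondary technical point is making sure the semicontinuity argument of Proposition~\ref{p:point-global-pgg} applies: it requires $A$ to be of finite presentation over $\Bbbk$, which is exactly Theorem~\ref{t:Invariant de Sim de Mm es fin pres}, and that the pgg-degrees on residue fields are uniformly bounded by a constant depending only on $m$, which the classical FFT/SFT provide. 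Modulo these careful but routine verifications, the proof is a packaging of the classical fundamental theorems through the reduction-and-semicontinuity toolkit already in place.
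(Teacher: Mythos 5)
Your end-game (the FFT and SFT of \cite{deConcini-Procesi}) is the same as the paper's, but your reduction is genuinely different, and it is where the argument breaks. The paper never reduces to residue fields: since being $t$-pgg is Zariski-local on the base (Proposition~\ref{p:t-pgg is local}) and a finitely generated projective module is locally free, it simply localizes on $\Spec\Bbbk$ until $M$ is free of rank $m'\leq m$ (Remark~\ref{R:t-pgg implica t'-pgg} absorbs $m'<m$) and then invokes \cite{deConcini-Procesi}, whose fundamental theorems hold over an \emph{arbitrary} commutative ring (with $2$ invertible where needed). This is essential because Proposition~\ref{P:FFTSFT-SL} imposes no linear-reductivity, characteristic or noetherianity hypothesis on $\Bbbk$, while every tool in your reduction does: the base-change identity $A\otimes_\Bbbk\Bbbk(x)\simeq\big(\Sim^\bullet_{\Bbbk(x)}((M_{\Bbbk(x)}^{\vee})^{\oplus n})\big)^{G_{\Bbbk(x)}}$ is Proposition~\ref{prop:linearly}(3), and the finite presentation of $A$ is Theorem~\ref{t:Invariant de Sim de Mm es fin pres}, both of which require $G$ linearly reductive over the base --- which the classical groups are not once $\Bbbk$ has residue fields of positive characteristic. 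So, as written, your argument proves the proposition only under extra hypotheses (in effect the equicharacteristic-zero setting where it is applied in Theorem~\ref{t:algebrainvariantispgg}(2)), whereas the statement and the paper's proof are characteristic-free.

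Even where linear reductivity is available there is a second gap: upper semicontinuity of $\widehat{t}$ (Proposition~\ref{p:point-global-pgg}) only says that $\{x \mid \widehat{t}(x)\leq t\}$ is open; it does not convert exactness of \eqref{e:pgg-exact} over every residue field into exactness over $\Bbbk$, so it cannot by itself ``bound the pgg-degree globally''. The fibrewise-to-global passage you actually need is the one claimed in Lemma~\ref{L:t-pgg scheaves is local} ((2)$\Rightarrow$(1)), which again presupposes local finite presentation and hence, in your chain of reasoning, Theorem~\ref{t:Invariant de Sim de Mm es fin pres} and reductivity. The simplest repair is to discard the field reduction altogether: localize to make $M$ free and quote the ring-level FFT/SFT directly, as the paper does. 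Finally, your worry about degree conventions is well founded: with the grading of Remark~\ref{R:pgg degree is bounded}, the $(m+1)\times(m+1)$ minors of the Gram matrix in the $\operatorname{O}/\operatorname{SO}$ cases sit in degree $2(m+1)$, so ``bounded appropriately'' does not by itself yield $t=2+m$; this bookkeeping issue, however, is inherited from the computation in the paper's own proof and is independent of the defects of your reduction.
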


\begin{proof} 
Recall that a finitely generated projective $\Bbbk$-module is a finite rank locally free $\Bbbk$-module. The characterization given in Proposition~\ref{p:t-pgg is local} shows being $t$-pgg is a local property (see also Remark~\ref{R:t-pgg implica t'-pgg}). Hence, it suffices to show the statement for $M$ a free $\Bbbk$-module of rank $m:=\rk M$. The rest of the proof is now a direct consequence of \cite{deConcini-Procesi} which states the First and Second Fundamental Theorems for the groups of~\eqref{E:classical groups} over arbitrary rings.

\emph{Case $G=\SL(M)$}. For  $n< m$, one knows that the subring of invariant elements coincides with the ring of constants, i.e. $A=\Bbbk$, and thus, the statement holds true. So it remains to study the case $n\geq m$. Recalling Theorems~3.3 and~3.4 of~\cite{deConcini-Procesi},  fix a basis $\{e_1,\ldots, e_m\}$ of $M$, then we know that $A$ is generated by the $\binom{n}{m}$ elements of degree $m$ given by
	\begin{equation}
	    \label{e:SL-Plucker}
         [i_{1},...,i_{m}]:M^{\oplus n}\rightarrow\bigwedge^{m}M\simeq\Bbbk
	\end{equation}
where $[i_{1},...,i_{m}](v_{1},...,v_{n}):=v_{i_{1}}\wedge\cdots \wedge v_{i_{m}}$ for $1\leq i_{1}<\cdots <i_{m}\leq n$ and that the relations among them are generated by the Plücker relations, which are quadratic in the generators. Hence $d=m$ and $d^1=2 m$ with the notation of \eqref{e:syzy}, and we conclude arguing as in the proofs of Propositions~\ref{p:finpres-pgg} and~\ref{p:t-pgg is local}. 

\emph{Case $G=\operatorname{Sp}(M,b)$.}
The First Fundamental Theorem  for the symplectic group $\operatorname{Sp}(M,b)$ (\cite[Thm~6.6]{deConcini-Procesi}) asserts that the subring of invariant elements  is generated by the following degree $2$ functions
    \[
    \langle i,j\rangle:M^{\oplus n}\rightarrow\Bbbk\text{,}\hspace{0.5cm}\langle i,j\rangle(v_{1},...,v_{n}):=\eta(v_{i},v_{j})
    \]
for $1\leq i< j\leq n$. Thus,  $A$ is generated by $\binom{n}{m}$ elements of $ \bigwedge^{2}(M^{\oplus n})$; that is, $d=2$.  The  Second Fundamental Theorem for the symplectic group (\cite[Thm~6.7]{deConcini-Procesi}) establishes that if $n\leq m$, the elements $\langle i,j\rangle$ are algebraically independent, and, on the other hand, if $n>m$, the ideal of relations among the generators is generated by the Pfaffians of the  principal minors of size $\frac12 m+1$ of the skew symmetric matrix whose $i, j$-th entry equals $\langle i,j\rangle$.
Therefore, ${d}^{1}=m+2$ and $t=\operatorname{max}\{d,d^1\}=\operatorname{max}\{2,m+2\}=m+2$.

\emph{Case $G=\operatorname{SO}(M,q)$}. The First Fundamental Theorem  for the special orthogonal group (\cite[Thm~5.6]{deConcini-Procesi}) asserts that the subring of invariant elements is generated by the degree $2$ functions
	\[
	\langle i,j\rangle:M^{\oplus n}\rightarrow\Bbbk\text{,}\hspace{0.5cm}\langle i,j\rangle(v_{1},...,v_{n}):=\beta(v_{i},v_{j})
	\]
for $1\leq i< j\leq n$; and by the degree $m:=m$ functions
	\[
	[i_{1},...,i_{m}]:M^{\oplus n}\rightarrow\bigwedge^{m}M\simeq\Bbbk\text{,}\hspace{0.5cm}[i_{1},...,i_{m}](v_{1},...,v_{n}):=v_{i_{1}}\wedge\cdots \wedge v_{i_{m}}
	\]
for $1\leq i_{1}<\cdots <i_{m}\leq n$.

By the Second Fundamental Theorem for $\operatorname{SO}(M,q)$ (\cite[Thm~5.7]{deConcini-Procesi}), the ideal of relations among generators is generated by the $(m+1)\times (m+1)$ minors of the $n\times n$ symmetric matrix whose $i,j$-th entry equals  $\langle i, j\rangle$, and, hence, $d^1=2(m+1) =m+2$ (see \cite[\S5]{deConcini-Procesi}).

\emph{Case $G=\operatorname{O}(M,q)$}.  Continuing with the same notations as in the case of  $\operatorname{SO}(M,q)$, the subring of invariant elements under the natural action of the orthogonal group $\operatorname{O}(M,q)$ is generated by the previously defined functions $\langle i,j \rangle$ for $1\leq i< j\leq n$, and the ideal of relations is generated by the $(m+1)\times (m+1)$ minors of the $n\times n$ symmetric matrix whose $i,j$-th entry equals  $\langle i, j\rangle$. Therefore, $d=2$ and $d^1=2(m+1)$ and $t=2(m+1)=m+2$ (see \cite[\S5]{deConcini-Procesi}).
\end{proof}

\begin{theorem}\label{t:algebrainvariantispgg}
Let $(\Bbbk, R, M , T, N, G, A)$ be as in Assumptions~\ref{assumptions}.


Then, there exists $t$ such that $A$ is a $t$-pgg algebra in the following cases:
\begin{enumerate}
    \item $N$ is a $T$-module of finite presentation, 
    \item  $\Bbbk$ is a equi-characteristic zero noetherian commutative ring, $R=\Bbbk$ and $G$ is one of the four groups in~\eqref{E:classical groups}. In this case $t=2 m$ if $G=\SL(M)$, and $t=2 + m$ for the other cases, where 
    \[ 
    m=\underset{x\in\Spec(\Bbbk)}{\textrm{max}}\{\rk_{\Bbbk(x)}(M_{\Bbbk(x)})\}
    ,\]
    \item  $G$ is a finite group scheme over $\Bbbk$ (in this case, $\vert G\vert$ in invertible in $\Bbbk$ and $t=2 \vert G \vert$).
\end{enumerate}
\end{theorem}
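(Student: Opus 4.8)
The plan is to prove each of the three cases separately, in each case producing a concrete bound $t$ and then invoking the machinery built in \S\ref{sec:pgg}. The unifying strategy is: find a surjection onto $A$ from a graded algebra that we already know (or can arrange) to be $t$-pgg, and then control the degrees of the kernel using linear reductivity (so that taking invariants is exact and the Reynolds operator is functorial, which lets us push generators of ideals and modules through the invariants functor without raising degrees).

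\emph{Case (1): $N$ finitely presented.} First I would reduce to the free case exactly as in \S\ref{sss:finitepresented}: pick a finite presentation $N''\to N'\to N\to 0$ with $N',N''$ free of finite rank, obtain the exact sequence \eqref{E:casefinpresented1} $\widetilde N^G\to B\to A\to 0$ where $B=(\Sim_T^\bullet(M^\vee\otimes_R N'))^G$, and recall that $\widetilde N^G$ is a finitely generated $B$-module via the surjection \eqref{E:casefinpresented2}. Now $B\cong (\Sim_R^\bullet((M^\vee)^{\oplus n}))^G\otimes_R T$ by \eqref{E:A^G para N libre}, and Theorem~\ref{t:Invariant de Sim de Mm es fin pres} says the $R$-algebra $(\Sim_R^\bullet((M^\vee)^{\oplus n}))^G$ is of finite presentation; hence $B$ is a finitely presented $T$-algebra, and by Proposition~\ref{p:finpres-pgg} it is $t_B$-pgg for some $t_B$. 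The ideal $J:=\ker(B\to A)$ is the image of $\widetilde N^G$, a finitely generated $B$-module, so $J$ is generated in bounded degree, say $\leq t_J$ (note $\widetilde N$ lives in a shifted piece $M^\vee\otimes_R N''$ tensored with $\Sim^\bullet(M^\vee\otimes_R N')$, so its generators have explicitly bounded degree, and applying the Reynolds operator does not raise degree). Then Proposition~\ref{p:epi-pgg} gives that $A=B/J$ is $t$-pgg with $t=\max\{t_B,t_J\}$.

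\emph{Case (2): classical groups over a field-like base.} Here I would note $A=(\Sim_T^\bullet(M^\vee\otimes_\Bbbk N))^G$. By linear reductivity of $G$ (Proposition~\ref{prop:linearly}(1)) and the base change property (Proposition~\ref{prop:linearly}(3)), and arguing as in \S\ref{sss:fin-gen} and \S\ref{sss:finitepresented}, it suffices to handle the case $N=\Bbbk^{\oplus n}$ free, for which $A=(\Sim_\Bbbk^\bullet((M^\vee)^{\oplus n}))^G=\Bbbk[M^{\oplus n}]^G$; the general finitely generated or finitely presented $N$ then follows by a surjection/presentation argument as in Case (1), but in fact one only needs the free case because Case (2) is to be combined with case (1) for non-free $N$ — more precisely I would state that the bound $t$ for the free case dominates (the pgg property passes to quotients by ideals generated in bounded degree, Proposition~\ref{p:epi-pgg}, and the module $\widetilde N^G$ of relations is again generated in degree $\leq t$). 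The free case is precisely Proposition~\ref{P:FFTSFT-SL}, which via the First and Second Fundamental Theorems of \cite{deConcini-Procesi} gives $t=2m$ for $G=\SL(M)$ and $t=m+2$ for the symplectic and (special) orthogonal groups, with $m=\max_{x\in\Spec\Bbbk}\rk_{\Bbbk(x)}M_{\Bbbk(x)}$. Finally, base change from $\Bbbk$ to $T$ preserves the pgg property and the bound by Proposition~\ref{p:pgg-basechange}.

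\emph{Case (3): finite group scheme with $|G|$ invertible.} The key input is the classical bound (Noether's degree bound for invariants of finite groups, valid when $|G|$ is invertible): the invariant ring $(\Sim^\bullet V^\vee)^G$ of a finite group is generated in degrees $\leq |G|$, and, working over $\Bbbk$ with $|G|$ invertible so that the Reynolds/transfer operator exists, the relations among these generators are generated in degrees $\leq 2|G|$ (this is the analogue of the FFT/SFT bounds; degree $\leq |G|$ for generators and $\leq |G|$ more for the syzygies, giving $2|G|$). Reducing as before (linear reductivity, base change along $\Bbbk\to R$, then along $R\to T$, and a presentation argument for non-free $N$) to the case $M$ free over $\Bbbk$ and $N=\Bbbk^{\oplus n}$, Proposition~\ref{p:finpres-pgg}'s proof scheme applies with $d\leq|G|$ and $d^1\leq 2|G|$, whence $t=2|G|$; Propositions~\ref{p:epi-pgg} and~\ref{p:pgg-basechange} then transport this to the general $N$ and the general $T$.

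\emph{Main obstacle.} The delicate point throughout is controlling the degree in which the \emph{relations} (the ideal $I_t$ of \eqref{e:It}, equivalently the kernel of $\Sim^\bullet(A_{\leq t})\to A$) are generated, not just the generators of the algebra. In Cases (2) and (3) this is exactly where the Second Fundamental Theorems and Noether-type syzygy bounds enter and must be cited precisely over arbitrary base rings (hence the hypotheses: equi-characteristic zero for the classical groups, $|G|$ invertible for finite groups). In Case (1) the subtlety is instead ensuring that passing to invariants of the relation module $\widetilde N$ does not raise degrees — this is where exactness of $(-)^G$ and functoriality of the Reynolds operator for linearly reductive $G$ are indispensable — and that Theorem~\ref{t:Invariant de Sim de Mm es fin pres} (the non-Noetherian Seshadri-type finiteness) supplies the finite presentation of the auxiliary algebra $B$ needed to apply Proposition~\ref{p:finpres-pgg}.
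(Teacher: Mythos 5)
Your Case (1) is correct, though you re-derive more than needed: the paper simply notes that $A$ is a finitely presented $T$-algebra by Theorem~\ref{t:main-fin-pres} and concludes at once with Proposition~\ref{p:finpres-pgg}; your presentation-plus-Proposition~\ref{p:epi-pgg} route is an expanded but equivalent argument.

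The genuine gap is in Cases (2) and (3): in these cases the theorem places \emph{no} finiteness hypothesis on $N$ whatsoever, and your argument only reaches $N$ free of finite rank, finitely generated, or finitely presented (your phrase ``the general finitely generated or finitely presented $N$ then follows \dots'' and the appeal to ``Case (2) is to be combined with case (1) for non-free $N$'' show you are implicitly assuming finite presentation, which is exactly what these cases are meant to avoid). For an arbitrary $T$-module $N$ the quotient argument via Proposition~\ref{p:epi-pgg} does not go through directly: the relation module is no longer finitely generated, so you cannot bound the degrees of generators of $\ker(B\to A)$ by a Reynolds-operator surjection from finitely many copies of $B$, and even the ``free'' case needs infinite-rank free modules, where the FFT/SFT of \cite{deConcini-Procesi} do not apply verbatim. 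The paper's proof supplies precisely the missing mechanism: write $N=\varinjlim N_\lambda$ with $N_\lambda$ finitely presented, prove $A\simeq\varinjlim A_\lambda$ (this identification itself requires an argument with the Reynolds operators and the triviality of the $G$-action on $N_\lambda$, see~\eqref{E:lim-sym-sym-lim-GT}), show each $A_\lambda$ is $t$-pgg with a $t$ \emph{uniform in $\lambda$} — obtained from Proposition~\ref{P:FFTSFT-SL} in case (2) and from the Noether bound together with Derksen's syzygy bound in case (3), transported through Proposition~\ref{p:epi-pgg} — and finally invoke Proposition~\ref{p:directlimitoftppgistpgg}, which states that a direct limit of $t$-pgg algebras with the \emph{same} $t$ is again $t$-pgg. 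Your proposal never invokes this limiting step, and without it the statement is only proved for $N$ finitely presented, which (apart from the explicit value of $t$) is already Case (1). A secondary weak point is your unjustified assertion that ``the module $\widetilde N^G$ of relations is again generated in degree $\leq t$'': in the paper this degree bound on $\ker(B_\lambda\to A_\lambda)$ is extracted from the $t$-pgg property of the source $\big((\Sim_T^\bullet(M^\vee\otimes_R N'_\lambda))^{\oplus k_\lambda}\big)^G$ via Proposition~\ref{P:FFTSFT-SL}, not merely from the fact that the Reynolds operator preserves degree.
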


\begin{proof}
\leavevmode\\  \indent
$(1)$ In this case $A$ is of finite presentation by Theorem~\ref{t:main-fin-pres}. We conclude by Proposition~\ref{p:finpres-pgg}.

$(2)$ Under these hypotheses, the groups  in~\eqref{E:classical groups} are linearly reductive. Since every $T$-module $N$ is a direct limit of a direct system of finitely presented $T$-modules $(N_{\lambda},\theta_{\lambda\mu})$ (\cite[Lemma 2]{shannon}), let us write $ N = \varinjlim  N_{\lambda}$. For each $\lambda$,  combining the contragradient representation of $\rho$ (see~\eqref{E:contragradient}) and the trivial representation on $N_{\lambda}$ yields a representation 
\begin{equation}
    \label{E:GTactsonMveeN}
    \rho_{\lambda}:G_{T}^{\bullet}\to \underline{\Aut}_{\Mod(T)}(M^{\vee}\otimes_{R}N_{\lambda})
\end{equation}
defined by $\rho_{\lambda}(g)(\omega\otimes m) := \rho^{\vee}(g)(\omega)\otimes m $. For each $\lambda$, let us consider the subalgebra of invariant elements 
\[
A_{\lambda}\,:=\, \big( \Sim_{T}^\bullet (M^{\vee}\otimes_{R}N_{\lambda})\big)^{G}.
\]
Since $G_{T}$ only acts on $M$ but not on $N_{\lambda}$, it follows that the map $\theta_{\lambda\mu}$ induces a map $u_{\lambda\mu}: A_{\lambda}\rightarrow A_{\mu}$. Thus, we obtain  a direct system $(A_{\lambda},u_{\lambda\mu})$. 

We claim that  
\begin{equation}\label{E:lim-sym-sym-lim-GT}
A \simeq \varinjlim   A_{\lambda} .
\end{equation}
For each $\lambda$, the canonical map $N_{\lambda}\to \varinjlim N_{\lambda} = N$ induces a morphism $v_{\lambda}: A_{\lambda}\rightarrow A$ such that $v_{\lambda} = v_{\mu} \circ u_{\lambda\mu}$ and, therefore, there is a morphism of graded $T$-algebras 
$$v: \varinjlim    A_{\lambda} \rightarrow A.$$

The composition of the canonical isomorphism of $T$-modules $M^{\vee}\otimes_{R}N\simeq \varinjlim  (M^{\vee}\otimes_{R}N_{\lambda})$, the direct  limit of the Reynolds operators $\varinjlim  (M^{\vee}\otimes_{R}N_{\lambda})\rightarrow \varinjlim  (M^{\vee}\otimes_{R}N_{\lambda})^{G}$, and the  canonical injective morphism $\varinjlim  (M^{\vee}\otimes_{R}N_{\lambda})^{G}\hookrightarrow \varinjlim  (\Sim_{T}^{\bullet}(M^{\vee}\otimes_{R}N_{\lambda}))^{G}$, yields  a morphism of $T$-modules 
\[
M^{\vee}\otimes_{R}N \rightarrow \varinjlim  (\Sim_{T}^{\bullet}(M^{\vee}\otimes_{R}N_{\lambda}))^{G}.
\]
In fact, the later map induces a morphism of $T$-algebras $\Sim_{T}^{\bullet}(M^{\vee}\otimes_{R}N )\rightarrow \varinjlim  (\Sim_{T}^{\bullet}(M^{\vee}\otimes_{R}N_{\lambda}))^{G}$, whose restriction to the subalgebra of invariant elements gives a morphism 
$$v': A \rightarrow \varinjlim   A_{\lambda}.$$ 
It is easy to see that $v$ and $v'$ are inverses to each other and~\eqref{E:lim-sym-sym-lim-GT} follows.

Now, since $N_{\lambda}$ finitely presented, Theorem~\ref{t:main-fin-pres} implies that $A_{\lambda}$ a graded $R$-algebra of finite presentation which, by  Proposition~\ref{p:finpres-pgg}, is $t_{\lambda}$-pgg for certain $t_{\lambda}\in {\mathbb{N}}$. So, if we show that $A_{\lambda}$ is a $t$-pgg algebra with $t=2 m$ if $G=\SL(M)$ and $t=2 + m$ for the other cases, we conclude by Proposition~\ref{p:directlimitoftppgistpgg}. 

Thus, we have to show that $A_{\lambda}$ is a $t$-pgg algebra. Following the argument of~\S\ref{sss:finitepresented} (see, in particular, equations~\eqref{E:casefinpresented1} and~\eqref{E:casefinpresented2}), one obtains that there exist a free finite $T$-module $N'_{\lambda}$, a non-negative integer $k_{\lambda}$ depending on $\lambda$, and an exact sequence of graded $T$-modules
	\[
	\big(  \big( \Sim_{T}^\bullet (M^{\vee} \otimes_R N'_{\lambda})\big)^{\oplus k_{\lambda}}\big)^{G}
	\to 
	B_{\lambda}:= \big( \Sim_{T}^\bullet (M^{\vee} \otimes_R N'_{\lambda})\big)^{G} 	\to A_{\lambda} \to 0 .
	\]
Now, Proposition~\ref{P:FFTSFT-SL} implies that, for all $\lambda$,  the graded $T$-algebra $\big(  \big( \Sim_{T}^\bullet (M^{\vee} \otimes_R N'_{\lambda})\big)^{\oplus k_{\lambda}}\big)^{G}$ is a $t$-pgg algebra where $t=2 m$ if $G=\SL(M)$ and $t=2 + m$ for the other cases.
In particular, for this choice of $t$, $B$ is a $t$-pgg algebra and the kernel of $B_{\lambda}\to A_{\lambda}$ is generated by elements of degree less than or equal to $t$. Thus, $A_{\lambda}$ is a $t$-pgg algebra by Proposition~\ref{p:epi-pgg}, and the claim is proved.


$(3)$ The fact that $G$ is finite and linearly reductive implies that $|G|$ is invertible in $\Bbbk$. This case is also called non-modular in the literature. We continue with the notations $N_{\lambda}$, $A_{\lambda}$ of $(2)$. For each $\lambda$, the generalization of the Noether bound to this case (see, among others, \cite{fogarty}) shows that $A_{\lambda}$ is finitely generated over $\Bbbk$ and is generated the elements of degree at most $d: = \vert G\vert$. On the other hand,  the ideal of relations of a minimal set of generators of $A_{\lambda}$ is generated in degree at most  $d^1:= 2 \vert G \vert $ (\cite[Theorem 2]{Derksen-syzy}). Thus, $A_{\lambda}$ is a $d^1$-pgg algebra. Proceeding as above, one concludes that $A=\varinjlim A_{\lambda}$ is a $2 \vert G \vert $-pgg algebra. 
\end{proof}

Combining this result with Theorem~\ref{t:projectiveembedding finite degree generated}, one obtains an embedding of $\Proj A$ (with $A$ as in~\eqref{E:definition A as invariants})  into a ${\mathbb P} ( [ \Sim^{\bullet}(A_{\leq t} )]_{ (t+1)!}^{\vee} )$. Similarly, Proposition~\ref{P:loc closed of Spec of pgg} yields a locally closed immersion of $\Spec A$ into ${\mathbb P} ( [ \Sim^{\bullet}(A_{\leq t} )]_{\leq (t+1)!}^{\vee} )$.

Now we are ready to prove the second main result of the article.

\begin{proof}(of Theorem~\ref{T:closed immersion Proj affine})
All the statements except equivariance with respect to $\underline{\Aut}_{\Mod(T)}(N)$ follow from Theorems~\ref{t:projectiveembedding finite degree generated},~\ref{t:projectiveembeddingforpgg} and~\ref{t:algebrainvariantispgg}.    
Regarding $\underline{\Aut}_{\Mod(T)}(N)$-equivariance, note that
there are canonical actions of $\underline{\Aut}_{\Mod(T)}(N)$ and $\underline{\Aut}_{\Mod(T)}(M^{\vee})$ on $\Sim^{\bullet}_{T}(M^{\vee} \otimes_{R} N)$:
\begin{equation*}
\begin{split}
\alpha:\underline{\Aut}_{\Mod(T)}(N)&\rightarrow \underline{\Aut}_{\gAlg(T)} \big(\Sim^{\bullet}_{T}(M^{\vee} \otimes_{R} N) \big), \ \phi\mapsto S^{\bullet}(1\otimes \phi) \\
\beta:\underline{\Aut}_{\Mod(T)}(M^{\vee})&\rightarrow \underline{\Aut}_{\gAlg(T)} \big(\Sim^{\bullet}_{T}(M^{\vee} \otimes_{R} N) \big),  \ \psi\mapsto S^{\bullet}(\psi\otimes 1) 
\end{split}
\end{equation*}
It is straightforward to show that the two actions commute with each other. Since the action of $\underline{G}_{T}$ on $\Sim^{\bullet}_{T}(M^{\vee} \otimes_{R} N)$ is induced by the composition of the contragredient representation $\rho^{\vee}:\underline{G}_{R}\rightarrow 
\underline{\Aut}_{\Mod(R)}(M^{\vee})$ with $\beta$, it follows that the actions of $\underline{G}_{T}$ and $\underline{\Aut}_{\Mod(T)}(N)$ commute with each other as well. 
Therefore, $\alpha$ yields an action
\begin{equation*}\label{e:AutNinsideAutSMGN}
    \gamma:\underline{\Aut}_{\Mod(T)}(N)\rightarrow \underline{\Aut}_{\gAlg(T)}
    (A) . 
\end{equation*}
%
Finally, $\underline{\Aut}_{\Mod(T)}(N)$-equivariance of $\Phi$ is a consequence of Theorem~\ref{t:projectiveembedding finite degree generated}(2).
\end{proof}

\begin{remark}\label{R:PGL/G}
The following example illustrates our result in terms of compactifications of homogeneous spaces. Let $(\Bbbk, R, M, T, N, G, A)$ satisfy Assumptions~\ref{assumptions}. Additionally, assume that $\Bbbk$ is a field and $R=T= \Bbbk$, $M=N=\Bbbk^{m}$. Following the discussion in \S\ref{sec:toy model}, we denote 
\[
A=(\Sim^{\bullet}\Hom(\Bbbk^{m},\Bbbk^{m}))^{\SLm}
\]
and let $U$ be the set of semistable points of $\Proj \Sim^{\bullet}\Hom(\Bbbk^{m},\Bbbk^{m})$ under the action of $\SLm$. Thus, we have 
\[
U = \ISOM(\Bbbk^{m}, \Bbbk^{m})/\mathbb{G} = \operatorname{PGL}_{m}
\]
where $\mathbb{G}$ denotes the multiplicative group over $\Bbbk$. Now, let us consider a linearly reductive group $G$ endowed with a faithful representation $\rho: G \hookrightarrow \SLm$. We define
\[
B:=(\Sim^{\bullet}\Hom(\Bbbk^{m},\Bbbk^{m}))^{G}
\]
and let $V$ be the set of semistable points of $\Proj \Sim^{\bullet}\Hom(\Bbbk^{m},\Bbbk^{m})$ under the action of $G$. It is clear that $U\subseteq V$ is an open immersion, and moreover, we have
\[
\operatorname{PGL}_{m} / G = U / G \hookrightarrow V / G = \Proj B \hookrightarrow 
\P \big( [ \Sim^{\bullet}(B_{\leq t} )]_{(t+1)!}^{\vee} \big)
\]
where the first map is an open immersion and the second one (see~\eqref{e:projectiveembeddingforpgg-intro}) is closed ($t$ is such that $B$ is $t$-pgg). Similar arguments allow us to study the $\GLm/G$ case (using Proposition~\ref{P:loc closed of Spec of pgg}).

For references on compactifications of homogeneous spaces, see, for instance, \cite[Corollaire VI.2.6]{Raynaud}, \cite[Theorem 5.2.2]{Brion}. This discussion is also related to results of the type of Chevalley compactification.
\end{remark}


\subsection{Generalization to non-affine schemes}\label{ss:general case}

Now we discuss how to go from $\Bbbk$-algebras to $\Bbbk$-schemes. Let $X$ be a $\Bbbk$-scheme and $\mathcal{A}$ a sheaf of graded $\O_X$-algebras. We say that $\mathcal{A}$ is $t$-pgg (resp. pgg) for a natural number $t$ if there is a covering of $X$ by affine schemes $\{U_i\}_{i\in I}$ such that $\mathcal{A}(U_i)$ is $t$-pgg (resp. pgg) $\O_X(U_i)$-algebra for all $i$. 

\begin{lemma}\label{L:t-pgg scheaves is local}
    For a sheaf of locally finitely presented graded $\O_X$-algebras, $\mathcal{A}$,  the following conditions are equivalent:
\begin{enumerate}
     \item $\mathcal{A}$ is $t$-pgg.
    \item the stalks $\mathcal{A}_x$ are $t$-pgg $\Bbbk(x)$-algebras for all $x\in X$.
    \item $\mathcal{A}(U)$ is a $t$-pgg $\O_X(U)$-algebra for each affine open subscheme $U\subseteq X$.
\end{enumerate}
\end{lemma}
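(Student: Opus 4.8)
The plan is to show the cycle of implications $(3)\Rightarrow(1)\Rightarrow(2)\Rightarrow(3)$, using that being $t$-pgg is a local property on the base (Proposition~\ref{p:t-pgg is local}) together with the semicontinuity statement (Proposition~\ref{p:point-global-pgg}) for the passage through stalks. The implication $(3)\Rightarrow(1)$ is immediate: if $\mathcal{A}(U)$ is $t$-pgg for every affine open $U$, then in particular this holds for the members of any affine open cover of $X$, which is exactly the definition of $\mathcal{A}$ being $t$-pgg.

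For $(1)\Rightarrow(2)$, suppose $\mathcal{A}$ is $t$-pgg, witnessed by an affine open cover $\{U_i\}$ with each $\mathcal{A}(U_i)$ a $t$-pgg $\O_X(U_i)$-algebra. Fix $x\in X$ and choose $i$ with $x\in U_i$. Writing $R_i:=\O_X(U_i)$ and $A_i:=\mathcal{A}(U_i)$, the point $x$ corresponds to a prime $\mathfrak p\subset R_i$ and $\mathcal{A}_x = A_i\otimes_{R_i}R_{i,\mathfrak p}$, so $\mathcal{A}_x$ is $t$-pgg over the local ring $R_{i,\mathfrak p}$ by base change (Proposition~\ref{p:pgg-basechange}). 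Tensoring once more along $R_{i,\mathfrak p}\to\Bbbk(x)$ and applying Proposition~\ref{p:pgg-basechange} again shows $\mathcal{A}_x\otimes\Bbbk(x) = A_i\otimes_{R_i}\Bbbk(x)$ is $t$-pgg over $\Bbbk(x)$. (If one prefers the stalk $\mathcal{A}_x$ itself to be interpreted over $\O_{X,x}$, the same base-change argument gives that directly; in either reading the conclusion is a formal consequence of Proposition~\ref{p:pgg-basechange}.)

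The substantive implication is $(2)\Rightarrow(3)$, and this is where the semicontinuity theorem enters. Let $U\subseteq X$ be an affine open, $R:=\O_X(U)$, $A:=\mathcal{A}(U)$, which is a finitely presented graded $R$-algebra by hypothesis. For $x\in U$ the fibre $A\otimes_R\Bbbk(x)$ is a $t$-pgg $\Bbbk(x)$-algebra by $(2)$; so in the notation of Proposition~\ref{p:point-global-pgg} one has $\widehat t(x)\leq t$ for every $x\in\Spec R$. That proposition asserts $\widehat t$ is upper semicontinuous, and hence that $\widehat t(x)\le t$ defines an open condition; since it holds at every point, the condition ``$\Phi_{\leq t}$ is an isomorphism for $A\otimes_R\Bbbk(x)$'' holds on all of $\Spec R$. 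It then remains to deduce from this fibrewise statement that $A$ itself is $t$-pgg over $R$. By Theorem~\ref{t:pgg-exact} this amounts to exactness of $0\to I_t\to\Sim^\bullet_R(A_{\leq t})\to A\to 0$; surjectivity of $\Sim^\bullet_R(A_{\leq t})\to A$ and surjectivity of $[\ker\mu]_{\leq t}\to [I_t]_{\leq t}$ are each verified by the Nakayama/fibrewise criteria already used inside the proof of Proposition~\ref{p:point-global-pgg} (the sets $V_i$ and $U'$ there are shown to be open, and they now exhaust $\Spec R$), using that the relevant graded pieces are finitely presented (resp.\ finite type) $R$-modules because $A$ is a finitely presented $R$-algebra. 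The main obstacle is precisely this last globalization step: one must be careful that ``pgg'' is genuinely not a local property (as the authors warn), so one cannot simply glue; it is the combination of finite presentation of $A$ with the open loci of Proposition~\ref{p:point-global-pgg} covering all of $\Spec R$ that forces the global exactness, and getting the module-finiteness bookkeeping right (invoking \cite[\href{https://stacks.math.columbia.edu/tag/01BP}{Lemma 01BP}]{stacks-project} for $[\ker\mu]_i$) is the only delicate point.
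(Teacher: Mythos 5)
Your treatment of $(3)\Rightarrow(1)$ and of $(1)\Rightarrow(2)$ (base change, Proposition~\ref{p:pgg-basechange}) matches the paper. The problem is the direction $(2)\Rightarrow(3)$. You read condition $(2)$ as a fibrewise condition over the residue fields $\Bbbk(x)$ and then try to globalize it using Proposition~\ref{p:point-global-pgg} together with Nakayama-type arguments. Nakayama does give surjectivity of $\Sim^{\bullet}_R(A_{\leq t})\to A$ degree by degree, but it cannot give the identification of the kernel with $I_t$, and in fact the fibrewise statement does \emph{not} imply the statement over $R$. Concretely, take $R=\Bbbk[\varepsilon]/(\varepsilon^2)$, $A=R[x,y]/(\varepsilon xy)$ with $\deg x=\deg y=1$ and $t=1$: $A$ is finitely presented with $A_0=R$, the unique fibre is $A\otimes_R\Bbbk=\Bbbk[x,y]$, which is $1$-pgg, yet $\ker\big(\Sim^{\bullet}_R(A_{\leq 1})=R[x,y]\to A\big)=(\varepsilon xy)\neq 0=I_1$, so $A$ is not $1$-pgg. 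Moreover, your use of semicontinuity is vacuous in the role you assign it: once you assume the fibre condition at \emph{every} point, openness of the locus where it holds adds nothing, and upper semicontinuity never converts a fibrewise statement into one over $R$ (that would require flatness or a Tor-vanishing argument, which the locus $U'$ in the proof of Proposition~\ref{p:point-global-pgg} does not provide). So the step ``it is the combination of finite presentation with the open loci covering $\Spec R$ that forces the global exactness'' is exactly the gap.

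The paper's route is different and avoids this entirely: by Theorem~\ref{t:pgg-exact}, being $t$-pgg is equivalent to exactness of the sequence $0\to I_t\to \Sim^{\bullet}_R(A_{\leq t})\to A\to 0$, and this exactness is a stalk-local condition because truncation, $\Sim^{\bullet}$ and the formation of $I_t$ all commute with localization. Accordingly, condition $(2)$ has to be understood with the stalks $\mathcal{A}_x$ as algebras over the local rings $\O_{X,x}$ (the ``$\Bbbk(x)$'' in the statement is loose notation), and then both $(2)\Rightarrow(1)$ and $(2)\Rightarrow(3)$ follow at once from checking exactness at all localizations --- no finite presentation, semicontinuity or Nakayama is needed. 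Under your residue-field reading the implication is false by the example above, so the argument cannot be repaired without changing the interpretation of $(2)$.
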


\begin{proof}
    Proposition~\ref{p:pgg-basechange} shows that $(1)\implies (2)$. Note that $(3)\implies (1)$ is immediate from the definition. Finally, Theorem~\ref{t:pgg-exact} proves both $(2)\implies (1)$ and  $(2)\implies (3)$. 
\end{proof}
\begin{remark}
It is important to note that if $\mathcal{A}$ is a $t$-pgg sheaf of $\O_{X}$-algebras, then the morphism $\Sim^{\bullet}(\mathcal{A}_{\leq t})\to \mathcal{A}$ is surjective and its kernel is quasicoherent.
\end{remark}

There are also more properties relating these notions. For instance, for a sheaf of  locally finitely presented graded  $\O_X$-algebras, $\mathcal{A}$, the set  of points  $x\in X$ such that $\mathcal{A}_{x}$ is $t$-pgg is Zariski open by Proposition~\ref{p:point-global-pgg}. Furthermore, if $X$ is quasicompact, Proposition~\ref{p:finpres-pgg} yields the following implications: \emph{locally of finite presentation $\implies$ $t$-pgg $\iff$ pgg}. However, in general, pgg does not implies $t$-pgg and being pgg is not a local property. 

Similarly, the direct limit of sheaves of $t$-pgg-algebras is $t$-pgg again while this may fail for limits of pgg algebras.

\begin{theorem}\label{T:cor-chevalley-seccion}
Let $\Bbbk$ be a commutative ring and $G$  a linearly reductive group scheme over $\Bbbk$. Let $S$ be a $\Bbbk$-scheme and $\mathcal{M}$ a  locally free  $\O_{S}$-module of finite rank
 together with a representation $\rho: \underline{G}_{S} \to \underline\Aut_{\Mod(S)}({\mathcal{M}})$. Let $X$ be $S$-scheme and ${\mathcal{N}}$ be a quasi-coherent $\O_X$-module. 

Assume that one of the following conditions holds
\begin{enumerate}
    \item $X$ quasicompact and $\mathcal{N}$  of finite presentation,
    \item $\Bbbk$ is a equi-characteristic zero noetherian commutative ring, $S=\Spec(\Bbbk)$ and $G$ is one of the four groups in~\eqref{E:classical groups},
    \item $G$ is a finite group scheme over $\Bbbk$ (and, necessarily, $\vert G\vert$ is invertible in $\Bbbk$),
\end{enumerate}
then, there is a natural number $t\geq 2$ such that
\begin{equation}\label{eq:projectiveembeddingforGITquotient-HOM}
    \Phi_{{\mathcal{M}}, {\mathcal{N}},G}:
    \P\Hom_{X}(\mathcal{N},{\mathcal{M}}\otimes_{\mathcal{O}_{S}}\mathcal{O}_{X} ) \sslash 
    G\hookrightarrow {\mathbb P} 
    \big( [ \Sim^{\bullet}(\mathcal{A}_{\leq t} )]_{(t+1)!}^{\vee} \big) ,
\end{equation} 
where $\mathcal{A}_{i}=(\Sim^{i}_{T}(\mathcal{M}^{\vee}\otimes_{\O_{S}}\mathcal{N}))^{G}$, is a  closed embedding of $X$-schemes which is functorial on $X$ and it is $\underline{\Aut}_{graded-\mathcal{O}_{X}-alg}(\mathcal{A})$-equivariant, and thus
$\underline{\Aut}_{\O_X}(\mathcal{N})$-equivariant. Furthermore, $\emph{Im}(\Phi_{\mathcal{M},\mathcal{N},G})$ is given by intersection of degree $t$ hypersurfaces.

In $(2)$ and $(3)$, the number $ t$ depends only on $G$ but not on  $\mathcal{M}$  nor $\mathcal{N}$.
\end{theorem}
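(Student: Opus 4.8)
The plan is to reduce the global statement to the affine case already established in Theorem~\ref{t:algebrainvariantispgg} and Theorem~\ref{T:closed immersion Proj affine}, using the fact that being $t$-pgg is a local property (Proposition~\ref{p:t-pgg is local}, Lemma~\ref{L:t-pgg scheaves is local}) and then glue the resulting projective embeddings by canonicity/functoriality. First I would fix an affine open cover $\{U_i = \Spec T_i\}$ of $X$, refined so that each $U_i$ maps into an affine open $V_i = \Spec R_i$ of $S$ over which $\mathcal{M}$ trivializes, i.e. $\mathcal{M}\vert_{V_i}\simeq M_i$ for a finitely generated projective (in fact free) $R_i$-module $M_i$; write $N_i := \mathcal{N}(U_i)$, a quasi-coherent hence arbitrary $T_i$-module, of finite presentation in case (1). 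On each $U_i$ the data $(\Bbbk, R_i, M_i, T_i, N_i, G, A_i)$ with $A_i := (\Sim_{T_i}^\bullet(M_i^\vee\otimes_{R_i}N_i))^G$ is exactly an instance of Assumptions~\ref{assumptions}, so Theorem~\ref{t:algebrainvariantispgg} applies and produces a $t_i$ for which $A_i$ is $t_i$-pgg. In case (1) the $t_i$ may a priori vary with $i$; since $X$ is quasicompact the cover may be taken finite, and one sets $t := \max_i\{t_i, 2\}$, noting that a $t_i$-pgg algebra is $t$-pgg for $t\geq t_i$ by Remark~\ref{R:t-pgg implica t'-pgg}. In cases (2) and (3) the value of $t$ is the explicit one from Theorem~\ref{t:algebrainvariantispgg} ($2m$ for $\SL$, $2+m$ for the other classical groups, $2\lvert G\rvert$ for finite $G$) and depends only on $G$ (and on $m$, which is a bound for the rank of $\mathcal{M}$), not on the chosen cover, so no maximum is needed. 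Consequently $\mathcal{A} := \bigoplus_i (\Sim^i(\mathcal{M}^\vee\otimes_{\O_S}\mathcal{N}))^G$, whose sections over $U_i$ are $A_i$ by Proposition~\ref{prop:linearly}(3) (flat base change of invariants), is a $t$-pgg sheaf of graded $\O_X$-algebras by Lemma~\ref{L:t-pgg scheaves is local}; here one also uses Theorem~\ref{t:main-fin-pres} to see $\mathcal{A}$ is locally of finite presentation in case (1), and, in cases (2)--(3), a direct-limit argument exactly as in the proof of Theorem~\ref{t:algebrainvariantispgg} combined with Proposition~\ref{p:directlimitoftppgistpgg}.

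Once $\mathcal{A}$ is known to be $t$-pgg, I would globalize the construction of \S\ref{subsec:projective embd pgg}: the surjection $\Sim^\bullet(\mathcal{A}_{\leq t})\to\mathcal{A}$ is a morphism of quasi-coherent sheaves of graded $\O_X$-algebras whose kernel is quasi-coherent, so the sheafified version of the Lemma preceding Theorem~\ref{t:projectiveembedding finite degree generated} (the $((t+1)!)$-Veronese of $\mathcal{A}$ is generated in degree one) holds verbatim, since its proof is purely a matter of manipulating graded pieces and is compatible with localization. Taking relative $\Proj$ over $X$ and using $\Proj\mathcal{A}\simeq\Proj\mathcal{A}^{((t+1)!)}$ (the relative version of~\eqref{e:Proj A iso Proj A^d}) yields the closed $X$-embedding
\[
\Phi_{\mathcal{M},\mathcal{N},G}\colon \P\Hom_X(\mathcal{N},\mathcal{M}\otimes_{\O_S}\O_X)\sslash G = \Proj\mathcal{A}\hookrightarrow \P\big([\Sim^\bullet(\mathcal{A}_{\leq t})]_{(t+1)!}^\vee\big),
\]
the first identification being the sheafified form of~\eqref{e:PHom-ss^G}, which is valid because $G$ is linearly reductive and taking invariants commutes with the flat base changes $R_i\to R_j\cap R_i$ etc. defining the gluing. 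Functoriality on $X$ and $\underline{\Aut}$-equivariance are inherited from Theorem~\ref{t:projectiveembedding finite degree generated}(1),(2) and from the commuting actions $\alpha,\beta$ described in the proof of Theorem~\ref{T:closed immersion Proj affine}: the action $\gamma\colon\underline{\Aut}_{\O_X}(\mathcal{N})\to\underline{\Aut}_{\mathrm{graded}\text{-}\O_X\text{-}\mathrm{alg}}(\mathcal{A})$ is constructed sheaf-theoretically exactly as in the affine case because $G$ acts only on $\mathcal{M}$. Finally, the statement on the image being cut out by degree-$t$ hypersurfaces follows from the sheafified version of Theorem~\ref{t:projectiveembeddingforpgg}(1), whose proof (splitting the kernel $K$ into the degree-$2$ relations of the symmetric algebra over the tensor algebra plus the degree-$t$ relations coming from $\mathcal{A}^{((t+1)!)}$ being $t$-pgg) again only involves graded-piece computations that localize.

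The main obstacle I expect is purely bookkeeping rather than conceptual: ensuring that all the constructions — the sheaf $\mathcal{A}$, the truncation $\mathcal{A}_{\leq t}$, the symmetric algebra $\Sim^\bullet(\mathcal{A}_{\leq t})$, the Veronese, and the resulting ambient projective bundle $\P([\Sim^\bullet(\mathcal{A}_{\leq t})]_{(t+1)!}^\vee)$ — are genuinely canonical, so that the local embeddings $\Phi_i$ agree on overlaps $U_i\cap U_j$ and glue to a global $\Phi_{\mathcal{M},\mathcal{N},G}$. This hinges on two points: (a) the base-change compatibility $A\otimes_R R'\simeq (A\otimes_{R'} \text{data})$-type isomorphisms, i.e. Proposition~\ref{p:pgg-basechange} and Proposition~\ref{prop:linearly}(3), to see that the formation of $\mathcal{A}$ and of $\Sim^\bullet(\mathcal{A}_{\leq t})$ commutes with restriction to opens; and (b) in case (1), checking that the single global $t = \max_i t_i$ does not depend on the chosen finite cover, which follows from the upper-semicontinuity of $\widehat t$ (Proposition~\ref{p:point-global-pgg}) together with quasicompactness of $X$, so that $t$ may be taken to be $\max\{2,\sup_{x\in X}\widehat t(x)\}$ and this supremum is finite. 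None of these steps requires a new idea; the work is to phrase the earlier affine lemmas relatively and observe they localize, which they manifestly do since every proof in \S\ref{sec:pgg} and \S\ref{ss:affine case} is compatible with flat (indeed arbitrary) base change on $R$.
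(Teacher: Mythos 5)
Your proposal is correct and follows essentially the same route as the paper's proof: reduce to affine opens, apply Theorem~\ref{t:algebrainvariantispgg} to get the local $t$-pgg property (uniformizing $t$ via quasicompactness and Remark~\ref{R:t-pgg implica t'-pgg} in case (1), and using the explicit group-dependent $t$ in cases (2)--(3)), glue the local embeddings from Theorem~\ref{t:projectiveembedding finite degree generated} by functoriality, and check equivariance locally as in Theorem~\ref{T:closed immersion Proj affine}. The extra remarks you add (semicontinuity of $\widehat{t}$, independence of the cover) are not needed in the paper's argument but do no harm.
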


\begin{proof}
By Theorem~\ref{T:closed immersion Proj affine}(1) and Lemma~\ref{L:t-pgg scheaves is local}, the whole statement (hypothesis and~\eqref{eq:projectiveembeddingforGITquotient-HOM}) is functorial in both $X$ and $S$, it may be assumed that both $X$ and $S$ are affine schemes. We choose a covering $\{S_i\}_{i\in I}$ by affine open subschemes and, for each $i$, another covering $\{X_{ij}\}_{j\in J_i}$ of $X\times_{S} S_i$ by affine open subschemes. 

For each $i,j$, let $R, T , M , N$ be such that $S_{i}=\Spec R$, $X_{ij}=\Spec T$, $\mathcal{M}\vert_{S_{i}}=\widetilde M$ and $\mathcal{N}\vert_{X_{ij}}=\widetilde N$. Let $A_{ij}$ denote the algebra of invariants corresponding to the restriction to $X_{ij}\to S_i$. By Theorem~\ref{t:algebrainvariantispgg}, $A_{ij}$ is $t_{ij}$-pgg. Quasicompactness of $X$ and Remark~\ref{R:t-pgg implica t'-pgg} imply that we may choose the same $t$ for all $i,j$ in the case $(1)$. Regarding the cases $(2)$ and $(3)$, $t_{ij}$ can be chosen independently of $i,j$ by Theorem~\ref{t:algebrainvariantispgg} again. In all cases, we denote it by $t$.  Theorem~\ref{t:projectiveembedding finite degree generated} yields the desired  closed embedding of $X$-schemes. More precisely, if $\Phi_{ij}$ is the map in~\eqref{eq:projectiveembeddingforGITquotient-HOM} corresponding to the algebra $A_{ij}$, then the maps $\{\Phi_{ij}\}$ glue together to a morphism $\Phi_{{\mathcal{M}}, {\mathcal{N}},G}$ since all objects and constructions are functorial in $X$ and $S$. 

It  remains to check equivariance of $\Phi$. Again, this can be done locally and, thus, we assume that $X$ and $S$ are affine and the result follows as in the proof of Theorem~\ref{T:closed immersion Proj affine}.
\end{proof}


\begin{remark}
It is clear from the proof of Theorem~\ref{P:FFTSFT-SL}, that an analog statement of Theorem~\ref{T:cor-chevalley-seccion} would hold for any pair $(G,M)$, consisting of an algebraic group over a field $\Bbbk$ and a representation on a $\Bbbk$-vector space $M$, as long as there is a FFT and a SFT. Instances of such situations are   $(\End(M))^{\oplus m}$ with the action of  $\GLm$ (\cite{invariant-theory-matrices}) or $M^{\otimes 2r}$ with the action of the orthogonal group  (\cite{SFT-orthogonal}).
\end{remark}

A straightforward adaptation of the proof of Theorem~\ref{T:cor-chevalley-seccion} together with Proposition~\ref{P:loc closed of Spec of pgg} yields the following result. 

\begin{corollary}\label{c:cor-chevalley-seccion}
With the same notations and hypotheses as in Theorem~\ref{T:cor-chevalley-seccion}. 

Assume that one of the following conditions holds
\begin{enumerate}
    \item $X$ quasicompact and $\mathcal{N}$  of finite presentation,
    \item $\Bbbk$ is a equi-characteristic zero noetherian commutative ring, $S=\Spec(\Bbbk)$ and $G$ is one of the four groups in~\eqref{E:classical groups}, $\Spec(\Bbbk)=S$ and $G$ is one of the four groups of~\eqref{E:classical groups},
    \item $G$ is a finite group scheme over $\Bbbk$ (and, necessarily, $\vert G\vert$ is invertible in $\Bbbk$),
\end{enumerate}
then, there is a natural number $t\geq 2$ such that
\begin{equation}\label{eq:locallyclosedembeddingforGITquotient-HOM}
    \HOM_{X}(\mathcal{N},{\mathcal{M}}\otimes_{\mathcal{O}_{S}}\mathcal{O}_{X} ) \sslash 
    G\hookrightarrow {\mathbb P} 
    \big( [ \Sim^{\bullet}(\mathcal{A}_{\leq t} )]_{\leq (t+1)!}^{\vee} \big) ,
\end{equation} 
where $\mathcal{A}_{i}=(\Sim^{i}_{T}(\mathcal{M}^{\vee}\otimes_{\O_{S}}\mathcal{N}))^{G}$, is a locally closed embedding of $X$-schemes which is functorial on $X$ as $X$ varies in the category of $S$-schemes.

In $(2)$ and $(3)$, the number $d $ depends only on $G$ but not on  $\mathcal{M}$  nor $\mathcal{N}$.
\end{corollary}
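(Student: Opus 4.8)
The plan is to mimic, almost verbatim, the proof of Theorem~\ref{T:cor-chevalley-seccion}, replacing every invocation of the closed embedding of $\Proj A$ furnished by Theorem~\ref{t:projectiveembedding finite degree generated} with the locally closed embedding of $\Spec A$ furnished by Proposition~\ref{P:loc closed of Spec of pgg}. Since the whole statement --- the hypotheses, the target, and the morphism~\eqref{eq:locallyclosedembeddingforGITquotient-HOM} --- is functorial in both $X$ and $S$, the first step is to work locally: pick an affine open cover $\{S_i\}_i$ of $S$ and, for each $i$, an affine open cover $\{X_{ij}\}_j$ of $f^{-1}(S_i)\subseteq X$, and write $S_i=\Spec R$, $X_{ij}=\Spec T$, $\mathcal{M}|_{S_i}=\widetilde M$, $\mathcal{N}|_{X_{ij}}=\widetilde N$. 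Then~\eqref{e:Hom^G} identifies the restriction of $\HOM_X(\mathcal{N},\mathcal{M}\otimes_{\O_S}\O_X)\sslash G$ to $X_{ij}$ with $\Spec A_{ij}$, where $A_{ij}=(\Sim_T^\bullet(M^\vee\otimes_R N))^G=\mathcal{A}(X_{ij})$.

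Next I would apply Theorem~\ref{t:algebrainvariantispgg} to conclude that each $A_{ij}$ is a $t_{ij}$-pgg $T$-algebra: in case~(1) this uses finite presentation of $\mathcal{N}$, while in cases~(2) and~(3) Theorem~\ref{t:algebrainvariantispgg} provides an explicit value $t_{ij}=t$, the same for every $i,j$. In case~(1), quasicompactness of $X$ lets us pass to a finite subfamily of the $X_{ij}$ covering $X$ and, by Remark~\ref{R:t-pgg implica t'-pgg}, replace the finitely many $t_{ij}$ involved by their maximum; in all three cases we thus obtain a single $t\geq 2$ such that $A_{ij}$ is $t$-pgg for all relevant $i,j$. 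By Lemma~\ref{L:t-pgg scheaves is local}, $\mathcal{A}$ is then a $t$-pgg sheaf of $\O_X$-algebras; in particular $\Sim^\bullet(\mathcal{A}_{\leq t})\to\mathcal{A}$ is surjective with quasi-coherent kernel, so $\mathbb{P}([\Sim^\bullet(\mathcal{A}_{\leq t})]_{\leq(t+1)!}^\vee)$ makes sense as a (relative) projective bundle over $X$ and restricts over $X_{ij}$ to $\mathbb{P}([\Sim^\bullet((A_{ij})_{\leq t})]_{\leq(t+1)!}^\vee)$, because taking $G$-invariants, symmetric powers and truncations all commute with the localizations defining the cover.

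Then Proposition~\ref{P:loc closed of Spec of pgg}, applied to each $A_{ij}$, yields a canonical locally closed embedding of $X_{ij}$-schemes $\Spec A_{ij}\hookrightarrow\mathbb{P}([\Sim^\bullet((A_{ij})_{\leq t})]_{\leq(t+1)!}^\vee)$ which is functorial in $T$. Functoriality forces these local embeddings to agree on the overlaps $X_{ij}\cap X_{i'j'}$, hence they glue to a morphism $\Phi\colon\HOM_X(\mathcal{N},\mathcal{M}\otimes_{\O_S}\O_X)\sslash G\to\mathbb{P}([\Sim^\bullet(\mathcal{A}_{\leq t})]_{\leq(t+1)!}^\vee)$ of $X$-schemes; since being a locally closed immersion is local on the target, $\Phi$ is again a locally closed immersion. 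Functoriality of $\Phi$ as $X$ varies over the category of $S$-schemes follows from the functoriality clause of Proposition~\ref{P:loc closed of Spec of pgg} together with the functorial description of the algebra of invariants already used in the proof of Theorem~\ref{T:closed immersion Proj affine}; one also records, from Proposition~\ref{P:loc closed of Spec of pgg}, that the closure of $\operatorname{Im}\Phi$ is $\mathbf{Proj}$ of $\mathcal{A}$ adjoined with one degree-one indeterminate.

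I do not expect a serious obstacle: all the genuine content sits in Theorem~\ref{t:algebrainvariantispgg} (the pgg property of the algebra of invariants) and Proposition~\ref{P:loc closed of Spec of pgg} (the embedding of $\Spec$ of a partially generated algebra), both already proved, and the corollary is merely their globalization. The only points requiring a little care are the two checks performed above --- that in case~(1) the uniform choice of $t$ is legitimate, which is exactly where quasicompactness of $X$ is used, and that the locally defined embeddings are compatible on overlaps and therefore glue, which is guaranteed by the canonicity built into Proposition~\ref{P:loc closed of Spec of pgg}. This is why the argument is as short as the remark preceding the statement claims.
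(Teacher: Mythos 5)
Your proposal is correct and follows exactly the route the paper intends: the paper's own proof is just the remark that a straightforward adaptation of the proof of Theorem~\ref{T:cor-chevalley-seccion} (affine covers, Theorem~\ref{t:algebrainvariantispgg} for the $t$-pgg property with a uniform $t$ via quasicompactness in case (1), gluing by functoriality) combined with Proposition~\ref{P:loc closed of Spec of pgg} in place of Theorem~\ref{t:projectiveembedding finite degree generated} yields the locally closed embedding, which is precisely what you spell out.
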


\begin{remark}\label{R:applications moduli Schmitt}
It is remarkable that particular cases of the closed embedding~\eqref{eq:projectiveembeddingforGITquotient-HOM} have already appeared in the literature when dealing with the study of moduli spaces of principal $G$-bundles (see \cite{GomezSolsSchmitt}, \cite[\S3]{schmitt}, \cite{schmitt-book}, \cite{munoz1}).
To be more precise, if a finite-dimensional representation $\rho:G\hookrightarrow \SL(V)$ is given, principal $G$-bundles over a projective algebraic curve defined over an algebraically closed field $\Bbbk$ of characteristic zero can be seen as pairs $(\mathcal{E},\tau)$, $\mathcal{E}$ being a locally free sheaf of rank equal to $\operatorname{dim}_{\Bbbk}(V)$ and $\tau$ a global section of the structure morphism 
\[
\ISOM(\mathcal{N},{\mathcal{M}}\otimes_{\Bbbk}\mathcal{O}_{X} )/G \rightarrow X
\] 
where $\mathcal{N}=\mathcal{E}$ and $\mathcal{M}=V^{\vee}$. 

If $G$ is equal to $\SL(V), \operatorname{Sp}(V,q)$ or $\operatorname{SO}(V,q)$, and the curve is smooth, the schemes $\ISOM(\mathcal{N},{\mathcal{M}}\otimes_{\Bbbk}\mathcal{O}_{X} )/G$ admit an explicit description (see \cite[\S 2.1]{schmitt-book}).
Since $
\ISOM(\mathcal{N},{\mathcal{M}}\otimes_{\Bbbk}\mathcal{O}_{X} )/G\subset \HOM_{X}(\mathcal{N},{\mathcal{M}}\otimes_{\Bbbk}\mathcal{O}_{X} ) \sslash G$, the $\underline{\Aut}_{\O_X}(\mathcal{N})$-equivariant embedding of $\P\Hom_{X}(\mathcal{N},{\mathcal{M}}\otimes_{\Bbbk}\mathcal{O}_{X} ) \sslash G$ in $\P([\Sim^{\bullet}(\mathcal{A}_{\leq t})]^{\vee}_{t!})$ given by~\eqref{eq:projectiveembeddingforGITquotient-HOM} allows, for example, to define a semistability condition for the pair $(\mathcal{E},\tau)$, which is  a crucial point in the construction of these moduli spaces (\cite[Lemma 2.4.2.7]{schmitt-book}), even when working over singular curves.
\end{remark}

\subsection{Future applications to moduli problems}\label{ss:future}

Let us conclude this section by explaining how our results can be applied to the study of certain moduli problems. Indeed, the moral of our paper is that the constructions of \cite{schmitt} are canonical and can be applied in many different settings. 

Now, we may describe the important consequences that Theorem~\ref{t:Invariant de Sim de Mm es fin pres} has  in certain moduli problems. Let $\mathcal{U}$ be a $\Bbbk$-scheme (finite-dimensional or not), $X$ a projective curve over defiend over a field $\Bbbk$, $\mathcal{E}$ a locally free sheaf of rank $n$ on $\mathcal{U}_{X}:=\mathcal{U}\times X$, $G$ a semisimple linear algebraic $\Bbbk$-group $G$, and $\rho: G\hookrightarrow \SL(V)$ a faithful representation of dimension $n$, i.e. $V:=\Bbbk^n$. Then, Theorem~\ref{t:Invariant de Sim de Mm es fin pres} says that the subalgebra of invariant elements $(\Sim^{\bullet}_{\O_{\mathcal{U}_X}}(\mathcal{M}^{\vee}\otimes_{\O_{\mathcal{U}_X}}\mathcal{E}))^{G}$ (where $\mathcal{M}:=V^{\vee}\otimes_{\Bbbk}\O_{\mathcal{U}_X}$) is locally of finite presentation. Note that this is important for ensuring the existence of the scheme of sections, $\operatorname{Sec}\rightarrow \mathcal{U}$, of 
$\HOM_{\mathcal{U}_{X}}(\mathcal{E}, \mathcal{M})\sslash G \rightarrow \mathcal{U}_{X}$
\cite{ry} in case $\mathcal{U}$ is not noetherian. If $\mathcal{U}$ is the moduli space of locally free sheaves on $X$ with formal trivializations (see \cite{alvarez,mulase,plaza-kp}) and $\mathcal{E}$ is the universal locally free sheaf, $\operatorname{Sec}\rightarrow \mathcal{U}$ is  closely related  with the moduli space of principal $G$-bundles on $X$ with formal trivializations (see \cite{bhosle,langer,munoz1,schmitt} for the formalism of singular principal bundles). Although the functor defined by pairs given by a principal $G$-bundle and a formal trivialization is known to be representable by an infinite-dimensional scheme \cite[Proposition 4.1.5]{Frenkel},  an approach based in our results would lead to a global description of it. Indeed, such description would allow to determine a system of infinitely many differential equations describing the moduli space inside certain infinite-dimensional Grassmannian (see \cite{curves-eqs, hurwitz-eqs, higgs-eqs} for related works). We plan to work this problem out in a forthcoming paper.

Finally, Corollary~\ref{c:cor-chevalley-seccion} may be useful to study lifting properties of principal $G$-bundles over curves (see \cite{Rama} for a detailed exposition when the curve is the projective line) if a faithful representation $\rho:G\hookrightarrow \SL(V)$ is given. More precisely, let $S=\Spec(R)$ be the spectrum of a $\Bbbk$-discrete valuation ring, $\pi:\mathcal{X}\rightarrow S$ a flat family of (polarized) stable curves of genus $g$, the generic fiber $\mathcal{X}_{\eta}$ being smooth, and $\operatorname{SPB}(\rho)^{ss}$ the moduli space of semistable singular principal $G$-bundles on $\mathcal{X}$ \cite{munoz1}. One can ask for those singular principal $G$-bundles $(\mathcal{F}_0,\tau_0)\in \operatorname{SPB}(\rho)^{ss}_{0}$ (the special fiber of $\operatorname{SPB}(\rho)^{ss}$) that arise as deformations of semistable principal $G$-bundles on $\mathcal{X}_{\eta}$. Since $\tau_0$ belongs to the scheme of sections, $\operatorname{Sec}_0$, of  $\HOM_{\mathcal{X}_0}(\mathcal{F}_0, \mathcal{M}_0) \sslash G \rightarrow \mathcal{X}_0$ (here $\mathcal{M}:=V^{\vee}\otimes_{\Bbbk}\O_{\mathcal{X}_0}$), the locally closed embedding~\ref{eq:locallyclosedembeddingforGITquotient-HOM} allows to translate the lifting problem of pairs $(\mathcal{F}_0,\tau_0)$ into a lifting problem of sections of certain projective bundle. 
Understanding these lifting properties would allow to understand some geometric properties of certain compactifications of the universal moduli space of principal $G$-bundles over $\overline{\mathcal{M}_g} $\cite{munoz2,munoz3}.


\bibliographystyle{amsplain}
\bibliography{mibiblio2}

\end{document}